\newtheorem{thm}{Theorem}[section]
\newtheorem{prop}[thm]{Proposition}
\newtheorem{lem}[thm]{Lemma}
\newtheorem{cor}[thm]{Corollary}
\newtheorem{rmq}[thm]{Remark}
\newcommand{\R}{\mathbb{R}}
\numberwithin{equation}{section}
\newcommand{\N}{\mathbb{N}}
\newcommand{\Bcal}{\mathcal{B}}
\newcounter{exercice}
\begin{document}
\title[Singular radial solutions for KS]{Singular radial solutions for the Keller-Segel equation in high dimension }

\thanks{D. Bonheure \& J.B. Casteras are supported by MIS F.4508.14 (FNRS), PDR T.1110.14F (FNRS); J.B. Casteras is supported by the
Belgian Fonds de la Recherche Scientifique -- FNRS;
D. Bonheure is partially supported by the project ERC Advanced Grant  2013 n. 339958: ``Complex Patterns for Strongly Interacting Dynamical Systems - COMPAT'' and by ARC AUWB-2012-12/17-ULB1- IAPAS. J. F\" oldes is partly supported by the National Sicence Foundation under the grant NSF-DMS-1816408. }

\author[Bonheure, Casteras, and F\" oldes]{Denis Bonheure \and Jean-Baptiste Casteras \and Juraj F\" oldes}

\address{Denis Bonheure, Jean-Baptiste Casteras
\newline \indent D\'epartement de Math\'ematiques, Universit\'e Libre de Bruxelles,
\newline \indent CP 214, Boulevard du triomphe, B-1050 Bruxelles, Belgium.}
\email{Denis.Bonheure@ulb.ac.be}
\email{jeanbaptiste.casteras@gmail.com}

\address{Juraj F\" oldes
\newline \indent Dept. of Mathematics, University of Virginia \indent 
\newline \indent P.O. Box 400137, 322 Kerchof Hall, Charlottesville, VA 22904-4137,\indent }
\email{foldes@virginia.edu}

\begin{abstract}
We study singular radially symmetric solution of the stationary Keller-Segel equation, that is, an elliptic equation with
exponential nonlinearity, which is super-critical in dimension $N \geq 3$. The solutions 
are unbounded at the origin and we show that they describe the asymptotics of bifurcation 
branches of regular solutions. It is shown that for any ball and any $k \geq 0$, there is a singular solution 
that satisfies Neumann boundary condition and oscillates at least $k$ times around the constant 
equilibrium. Moreover,  we prove that in dimension $3 \leq N \leq 9$ there are regular solutions 
satisfying Neumann boundary conditions that are close to singular ones. Hence, it follows that 
there exist regular solutions on any ball with arbitrarily fast oscillations. 
For generic radii, we show that the bifurcation branches of regular solutions oscillate in the bifurcation plane when $3\leq N\leq 9$
and approach to a singular solution. In dimension $N > 10$, we show that 
the Morse index of the singular solution is finite, and therefore the existence of regular solutions with fast oscillations 
is not expected. 
\end{abstract}

\maketitle

\section{Introduction}
The goal of the present paper is to investigate singular, radial solutions of the so-called Keller-Segel equation
\begin{equation}\label{eq:intro_v_lambda}
\left\{
\begin{aligned}
-\Delta v+v &= \lambda e^v& \qquad &\text{in } B_R \setminus\{0\} \,, \\
\quad v &> 0 & &\text{in }B_R \setminus\{0\} \,, \\
\partial_\nu v &= 0 &\quad &\text{on } \partial B_R\,,
\end{aligned}
\right. 
\end{equation}
where $B_R \subset \R^N$, $N \geq 3$  is a ball of radius $R > 0$ centered at the origin. The solutions are assumed to
blow-up at the origin with a specific rate (see \eqref{KSwhole} below) which is in some sense minimal so that they 
are limits of  sequences of 
regular solutions with value at the origin approaching infinity.  
Then,  qualitative properties of singular solutions 
such as Morse index, yield information about oscillations of the bifurcation branches. We give more details below. 

The problem \eqref{eq:intro_v_lambda} is motivated by models of chemotaxis,  an omnipresent mechanism in biology that describes the motion of species towards higher (lower) concentration of a chemical substance, for example nutrients or poisons. Sometimes the substance is 
also secreted by the species themselves, which induces a complicated large scale behavior such as aggregation, scattering, or pattern formation. 
Mathematically, this phenomenon can be described by a strongly coupled evolution system introduced by Keller and Segel \cite{Keller}
\begin{equation}\label{kssys}
\left\{
\begin{aligned}
\dfrac{\partial u}{\partial t} &= \Delta u - D_1\nabla \cdot (u\nabla \phi(v)) \quad &\text{in } \Omega \times (0, T),\\[3mm]  
\dfrac{\partial v}{\partial t} &= D_2 \Delta v-D_3 v+D_4 u \quad &\text{in } \Omega \times (0, T),\\[3mm] 
\end{aligned}
\right.
\end{equation}
where $T > 0$, $\Omega \subset \R^N$ is a smooth bounded domain, $D_i$, $i=1,\cdots ,4$ are positive constants, and $\phi$ is a smooth strictly increasing function, which depends on a particular model. Since function $v$ represents the concentration of a chemical substance and $u$ 
stands for the concentration of the considered organisms, it is natural to suppose 
\begin{equation}\label{posi}
u,v \geq 0 \qquad  \text{in } \Omega \times (0, T) 
\end{equation}
and to complement the model with no-flux boundary conditions
\begin{equation}\label{bc}
\partial_\nu u=\partial_\nu v=0 \quad \text{on } \partial\Omega \times (0, T), \\[3mm]
\end{equation}
and some non-negative initial conditions. 
 The system \eqref{kssys} has attracted a lot of attention these past decades and we refer to surveys \cite{MR2013508, MR2073515}, and to references therein for more details on the existence, blow-up, and asymptotic behavior  of solutions.

The analysis of  global dynamics  of \eqref{kssys} crucially depends on the understanding of equilibria, that is, solutions of 
$$
\nabla\cdot (u \nabla (\log u - D_1 \phi (v))=0, \qquad D_2 \Delta v - D_3v + D_4 u = 0 \,,
$$
with boundary conditions \eqref{bc}. 
By a standard reasoning one has 
$u=C e^{D_1 \phi (v)}$ for some positive constant $C$. The canonical choices for $\phi$ are $\phi (v)=v$, which leads to the Keller-Segel equation  \eqref{eq:intro_v_lambda} on a domain $\Omega$ and $\phi (v)=\ln v$, which after appropriate rescaling,  yields Lin-Ni-Takagi equation
\begin{equation}\label{lnt}
\left\{
\begin{aligned}
- \Delta v+v &=v^p& \qquad & \text{in }\Omega,\\ 
\quad v &>0 && \text{in }\Omega,\\
\partial_\nu v &=0 &&\text{on } \partial \Omega .
\end{aligned}
\right.
\end{equation}
 The constants $\lambda$ and $p$ in \eqref{eq:intro_v_lambda}  and \eqref{lnt} respectively depend on the parameters $D_i$ of the system. 
 A large amount of literature has been devoted to the Lin-Ni-Takagi equation in the subcritical and critical case, that is, 
 when $N\geq 3$ and $1<p \leq p_S := \dfrac{N+2}{N-2}$ (see \cite{MR1696122, del2015interior, MR1721719, MR929196} and references therein). 
Much less is known in the super-critical case, $p > p_S$ for \eqref{lnt} or  $N \geq 3$  for  \eqref{eq:intro_v_lambda}, see  
\cite{benguria, MR1769174, WangWei2002}.
 \smallskip

Clearly if $p$ increases, then the problem \eqref{lnt} becomes `more super-critical', however the role of $\lambda$ in \eqref{eq:intro_v_lambda} is less 
obvious, since the character of the nonlinearity remains unchanged as $\lambda$ varies. To obtain a better insight, notice that 
\eqref{lnt} has two constant equilibria $v \equiv 0$ and $v \equiv 1$ which are in particular independent of $p$. 
On the other hand if $\lambda < 1/e$, then \eqref{eq:intro_v_lambda} has two constant solutions $\underline{u}_\lambda < 1 < \bar{u}_\lambda$ satisfying
\begin{equation}\label{ceqs}
\lambda e^\mu=\mu 
\end{equation}
and if $\lambda > 1/e$ there is no constant solution. Furthermore, $\bar{u}_\lambda \to \infty$ and $\underline{u}_\lambda \to 0$ as $\lambda \to 0^+$. 
To reveal the analogy between \eqref{lnt} and \eqref{eq:intro_v_lambda} we denote $\mu = \bar{u}_\lambda$ and 
\begin{equation}
u:=\frac{v_\lambda}{\overline{u}_\lambda}=\frac{v_\lambda}{\mu} \,.
\end{equation}
Then, $u$ satisfies
\begin{equation}\label{eq:intro_u_mu}
\left\{
\begin{aligned}
-\Delta u+u &= e^{\mu(u-1)}&\qquad &\text{in } B_R \\
u &> 0 && \text{in }B_R,\\
\partial_\nu u &= 0   &\quad &\text{on } \partial B_R \,,
\end{aligned}
\right.
\end{equation}
with $u \equiv 1$ and $u \equiv \underline{u}_\mu$, where 
\begin{equation}\label{eq:underline_u_mu_def}
\underline{u}_\mu=e^{\mu(\underline{u}_\mu-1)}, \quad \underline{u}_\mu<1
\end{equation}
are constant solutions of \eqref{eq:intro_u_mu}. In this form it is more visible that the nonlinearity becomes `more critical'
if $\mu$ is large, which is equivalent to $\lambda$ being small. 

The following bifurcation result for \eqref{eq:intro_u_mu} with parameter $\mu$ was obtained in \cite{bocano}, 
see \cite{BonheureGrumiauTroestler2015} for an analogous  result for \eqref{lnt}. 
Note that for fixed parameters the radial solutions of the second order equations are uniquely determined by the value of the function at $0$ (since $u'(0) = 0$), 
therefore it suffices to investigate bifurcation diagrams in $\R^2$ with coordinates corresponding to $\mu$ and $u(0)$.
Specifically, by $(\mu_0, A)$ we denote a pair $(\mu_0, u)$, where $u$ is the solution of \eqref{eq:intro_u_mu} with $\mu = \mu_0$
and $A = u(0)$.
Here and below 
$\lambda_i^{rad}$ denotes the $i$-th eigenvalue of the operator $-\Delta+Id$ in
the ball $B_R := \{x \in \mathbb{R}^N : |x| < R\}$ with Neumann boundary conditions, restricted to the space of radial functions.

\begin{thm}\label{thm:bifurcation}
For every $i\geq 2$, the trivial branch $(\mu, 1)$ of problem \eqref{eq:intro_u_mu} 
has a  bifurcation point at $(\lambda_i^{rad},1)$. 
Let $\Bcal_i \subset \R^2$ be the continuum that branches out of $(\lambda_i^{rad},1)$. The following holds
\begin{itemize}
\item[(i)] the branches $\Bcal_i$ are unbounded and do not intersect, and furthermore close to $(\lambda_i^{rad},1)$, $\Bcal_i$ is a $C^1$-curve;
\item[(ii)] if $(\mu, A) \in \Bcal_i$, then the corresponding solution $u_\mu$ satisfies $u_\mu > 0$ in $B_R$;
\item[(iii)] each branch consists of two connected components $\Bcal_i^- := \Bcal_i \cap \{(\mu, A) : A < 1\}$ and 
 $\Bcal_i^+ := \Bcal_i \cap \{(\mu, A) : A > 1\}$;
\item[(iv)] if $(\mu, A) \in \Bcal_i$ then the corresponding $u_\mu-1$ has exactly $i-1$ zeros, $u_\mu'$ has exactly $i-2$ zeros;
\item[(v)] the functions satisfying $u_\mu(0) < 1$ are uniformly bounded in the $C^1$-norm.
\end{itemize}
\end{thm}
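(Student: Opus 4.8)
The plan is to combine Crandall-Rabinowitz local bifurcation with a Rabinowitz-type global continuation, and to exploit the radial ODE structure (uniqueness for the Cauchy problem and an energy identity) to control the nodal behaviour along the branches and to obtain the a priori estimate. First, writing $u = 1 + w$ and using that $\partial_u e^{\mu(u-1)} = \mu$ at $u=1$, the linearisation of \eqref{eq:intro_u_mu} at the trivial branch is $w \mapsto -\Delta w + (1-\mu)w$ with Neumann conditions, so bifurcation can occur only at $\mu = \lambda_i^{rad}$. The radial Neumann eigenfunctions of $-\Delta + \mathrm{Id}$ solve a second order ODE, hence each $\lambda_i^{rad}$ is simple, and by the Crandall-Rabinowitz theorem $(\lambda_i^{rad},1)$ is a bifurcation point near which the solution set is a $C^1$ curve $s \mapsto (\mu(s), 1 + s\varphi_i + o(s))$ with $\varphi_i$ the $i$-th radial Neumann eigenfunction; this is the local statement in (i). Moreover, uniqueness for the radial Cauchy problem shows that a solution with $u(0) = 1$ is $u \equiv 1$, so no nontrivial solution lies on $\{A = 1\}$ and the two arms of the local curve start strictly on opposite sides of it.

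Next I would analyse the global structure. Since $w = u_\mu - 1$ solves $w'' + \tfrac{N-1}{r}w' = w - (e^{\mu w} - 1)$ with $w'(0) = w'(R) = 0$, and the only constant solutions of \eqref{eq:intro_u_mu} are $1$ and $\underline{u}_\mu$, Cauchy uniqueness forbids a nontrivial solution from having a double zero of $u_\mu - 1$ (which would force $u_\mu \equiv 1$) or of $u_\mu'$ (at such a point the equation gives $u_\mu \in \{1, \underline{u}_\mu\}$, forcing $u_\mu$ constant); hence the numbers of zeros of $u_\mu-1$ and of $u_\mu'$ in $[0,R]$ are locally constant along $\Bcal_i$, interior zeros being unable to escape through the endpoints (since $u_\mu(R)=1$ would force $u_\mu\equiv1$). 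Evaluating at the bifurcation point, where $u_\mu - 1 \approx s\varphi_i$ and $\varphi_i$ has $i-1$ interior zeros, shows $u_\mu-1$ has $i-1$ zeros and, with $u_\mu'(0)=u_\mu'(R)=0$, that $u_\mu'$ has $i-2$ interior zeros: this is (iv). Branches with distinct nodal counts are disjoint, so $\Bcal_i \cap \Bcal_j = \emptyset$ for $i\neq j$; by Rabinowitz's global bifurcation theorem $\Bcal_i$ is unbounded or returns to the trivial branch at some $(\lambda_j^{rad},1)$, the latter being excluded by the nodal count, so $\Bcal_i$ is unbounded, finishing (i). Applying the unilateral form of the theorem (Dancer, L\'opez-G\'omez) to the two local arms, and using that $\{A=1\}$ carries no nontrivial solution together with the same nodal obstruction, shows the arms generate precisely the two connected unbounded subcontinua $\Bcal_i^+$ and $\Bcal_i^-$: this is (iii).

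For positivity (ii), I would argue that along each $\Bcal_i^\pm$ the set $\{(\mu,A) : \min_{\overline{B_R}} u_\mu > 0\}$ is open (continuous dependence) and nonempty (near $(\lambda_i^{rad},1)$, where $u_\mu \approx 1$), and also closed: if $u_\mu \geq 0$ then $(-\Delta + \mathrm{Id})u_\mu = e^{\mu(u_\mu-1)} > 0$, so the strong maximum principle excludes an interior zero and the Hopf boundary lemma with $\partial_\nu u_\mu = 0$ excludes a boundary zero, hence $u_\mu > 0$; and non-negativity passes to $C^1$-limits along the branch. By connectedness the set is all of $\Bcal_i^\pm$.

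For the uniform $C^1$ bound (v) on solutions with $u_\mu(0) < 1$, which I expect to be the hardest point, the main a priori estimate comes from the radial energy $E(r) = \tfrac12 u_\mu'(r)^2 - H(u_\mu(r))$ with $H'(t) = t - e^{\mu(t-1)}$: since $E'(r) = -\tfrac{N-1}{r}u_\mu'(r)^2 \leq 0$, one has $H(u_\mu(r)) \geq H(u_\mu(0))$ for all $r$; since necessarily $u_\mu(0) \in (\underline{u}_\mu, 1)$ (the case $u_\mu(0) \le \underline{u}_\mu$ would force $u_\mu \to -\infty$) and $H$ increases on $(\underline{u}_\mu,1)$ and decreases beyond, $u_\mu$ is trapped in a bounded interval, giving a uniform $L^\infty$ bound. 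Combined with the identity $\int_{B_R} u_\mu = \int_{B_R} e^{\mu(u_\mu-1)}$ (integrate the equation and use the Neumann condition) and the fact that $e^{\mu(u_\mu-1)} \le 1$ wherever $u_\mu \le 1$, elliptic regularity should upgrade this to the desired $C^1$ bound. This last upgrade is the obstacle: since the nonlinearity is supercritical one has to rule out concentration of $e^{\mu(u_\mu-1)}$, i.e.\ show that the oscillation amplitude $\|u_\mu - 1\|_\infty$ is small relative to $1/\mu$, and it is precisely here that the bounded nodal count ($i-1$ oscillations) must be used, to prevent $\mu$ and the amplitude from both being large.
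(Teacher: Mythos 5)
Two remarks. First, a point of reference: this paper does not actually prove Theorem \ref{thm:bifurcation}; the result is quoted from \cite{bocano} (with \cite{BonheureGrumiauTroestler2015} for the Lin--Ni--Takagi analogue), so there is no in-paper proof to compare against. Your route --- Crandall--Rabinowitz at the simple radial Neumann eigenvalues, a Rabinowitz/Dancer--L\'opez-G\'omez global continuation, separation and persistence of nodal counts protected by Cauchy uniqueness for the radial ODE (no double zeros of $u_\mu-1$ or of $u_\mu'$, no escape of zeros through $r=0$ or $r=R$), and positivity by an open--closed argument with the strong maximum principle --- is the standard one and is, in substance, the argument of that reference; I see no error in your treatment of (i)--(iv) and (ii).

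Second, the ``obstacle'' you flag at the end of (v) is not a real one: no elliptic regularity, no exclusion of concentration, and no use of the nodal count are needed, because the Lyapunov function you already introduced gives the $C^1$ bound directly. With $H(t)=\tfrac{t^2}{2}-\tfrac1\mu e^{\mu(t-1)}$, monotonicity of $E$ gives $\tfrac12 u_\mu'(r)^2 \le H(u_\mu(r)) - H(u_\mu(0))$ for all $r$, and your trapping argument shows $u_\mu(r)\in[u_\mu(0),b_\mu]$, where $b_\mu>1$ solves $H(b_\mu)=H(u_\mu(0))$ (the superlevel set $\{H\ge H(u_\mu(0))\}$ has two components and continuity keeps $u_\mu$ in the one containing $u_\mu(0)$). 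Since $u_\mu(0)\in(0,1)$ gives $H(u_\mu(0))\ge -\tfrac1\mu$, the equation for $b_\mu$ yields $\mu(b_\mu-1)\le \ln\bigl(1+\tfrac{\mu b_\mu^2}{2}\bigr)$, which bounds $b_\mu$ uniformly once $\mu$ stays away from $1$ (as it does on the branches, where $\mu\ge\lambda_2^{rad}>1$); this is the uniform $L^\infty$ bound. Then $H(u_\mu(r))-H(u_\mu(0))=\int_{u_\mu(0)}^{u_\mu(r)}\bigl(t-e^{\mu(t-1)}\bigr)\,dt\le \tfrac{b_\mu^2}{2}$, hence $|u_\mu'|\le b_\mu$ everywhere, and (v) is done. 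For completeness, your parenthetical claim that $u_\mu(0)\le\underline{u}_\mu$ is impossible for non-constant solutions follows from $(r^{N-1}u_\mu')'=r^{N-1}\bigl(u_\mu-e^{\mu(u_\mu-1)}\bigr)<0$ as long as $u_\mu<\underline{u}_\mu$, which makes $u_\mu$ strictly decreasing and contradicts $u_\mu'(R)=0$.
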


The above theorem guarantees that $\Bcal_i^-$ is a subset $\R \times (0, 1)$ and it is unbounded. Since there are no non-trivial solutions for $\mu \leq 0$, 
we obtain that for each $i \geq 2$ the curve $\Bcal_i^-$ is unbounded from above in the $\mu$ coordinate.  
We refer an interested reader to \cite{bocano2,bocano,del2014large,PistoiaVaira2015} for the construction of solutions that we expect to be on the lower branches $\mathcal{B}_i^-$ (the solutions lie in the half plane $\{u(0) < 1\}$, but it is not known whether they are connected with the trivial solution). 
Note that all the references above except \cite{del2014large} deal with radial solutions and  analogous results 
to Theorem \ref{thm:bifurcation} for the Lin-Ni-Takagi equation has been proved in \cite{BonheureGrossiNorisTerracini2015}. 
We also refer to \cite{BoCoNo,CoNo} for related problems involving the $p$-laplace operator. 

Properties of the  upper branches $\mathcal{B}_i^+$ are more delicate, since the corresponding solutions are not 
a priori uniformly bounded. 
Although our interest is in dimension $N \geq 3$, we first recall known results in two dimension.

If $N = 2$, then we call the problem 'critical'
since the exponential nonlinearity is critical. 
 It is proved in \cite{bocano} that the branches $\mathcal{B}_i^+$ are unbounded and they exist for all values of $\mu\geq \lambda_i^{rad}$. 
Since $\lambda \to 0$ as $\mu\to \infty$, this means that in $(\lambda, u(0))$ plane, $\mathcal{B}_i^+$ approaches arbitrary close to the 
line $\lambda = 0$.  
 Moreover, for $N = 2$ del Pino and Wei \cite{MR2209293} constructed a class of radial solutions $(u_\lambda)_{\lambda \ll 1}$ of \eqref{eq:intro_u_mu} such that 
\begin{equation}
u_\lambda (x) \rightarrow 8\pi \mathcal{G}(x,0) \qquad \textrm{as } \lambda \to 0^+
\end{equation} 
uniformly on a compact subsets of $B_R \setminus \{0\}$, where $\mathcal{G}$ is the Green's function, that is, for any $y\in \overline{B}_R$, 
$x \mapsto \mathcal{G}(x,y)$ solves
$$
-\Delta_x \mathcal{G}+\mathcal{G}=\delta_y \text{ in } B_R,\quad 
\dfrac{\partial \mathcal{G}}{\partial \nu_x}=0 \text{ on }\partial B_R 
$$
 and $\delta_y$ is the Dirac measure supported at $y$.  We remark that in \cite{MR2209293}, a result for non-radial solutions on general domains is also proved. 
 Since one can check that $w_\lambda (0)=u_\lambda (0)/ \overline{u}_\lambda >1$, the functions $(w_\lambda)_{\lambda > 0}$ belong to solutions in the upper half plane, 
 and their oscillation properties indicate that $\lambda \mapsto w_\lambda$ corresponds to the asymptotic part of the  first upper branch $\mathcal{B}_1^+$.
 The results of \cite{MR2209293} were extended, by the first two authors in collaboration with Rom\' an in  \cite{bocaro}, to 
  solutions concentrating on the boundary and/or on an interior sphere and blowing-up at the origin.  Even more generally, 
under  suitable non-degeneracy assumptions, it is possible to 
  show the existence of solutions $(v_\lambda)_{\lambda > 0}$ such that $v_\lambda(0) \to \infty$ as $\lambda \to 0^+$ and for every $M \geq 0$ there is 
 $(r_j)_{j = 1}^M \subset (0, R)$ such that $v_\lambda (r_j) \to \infty$ as $\lambda \to 0^+$. 
 These non-degeneracy conditions are conjectured to hold, and it is believed that the solutions that concentrate on $i$ spheres form the asymptote of the upper branch $\mathcal{B}_i^+$. We remark that in the `asymptotically critical' case, $p \approx p_S$ for Lin-Ni-Takagi equation with $N = 3$,  Rey and Wei \cite{reywei} constructed a class of solutions that are  believed to form the asymptote of $\mathcal{B}_1^+$.

\medbreak 

Our main aim is to describe the purely supercritical upper branches of \eqref{eq:intro_v_lambda}, a problem that
recently attracted a lot 
of attention especially with Dirichlet boundary conditions
\begin{equation}
\label{eqjl}
\begin{cases}U_{rr}+\dfrac{N-1}{r}U_r + \lambda g(U)=0,\ 0<r<1,\\
U>0,\ 0<r<1,\\
U(1)=0,\ \end{cases}
\end{equation}
see \cite{buddnorbury,dolbeaultflores,guowei,josephlundgren,merlepeletier,miya2}. In \cite{josephlundgren}, see also 
\cite[Chapter 2]{MR2779463} for a recent survey, Joseph and Lundgren 
considered $g(U)=e^U$ and proved that the set of positive solutions to \eqref{eqjl} forms a curve $\gamma$ emanating from 
the trivial solution $U \equiv 0$, $\lambda = 0$.
 When $3\leq N\leq 9$, $\gamma$ has infinitely many turning points around $\lambda^\ast =2(N-2)$ and blows up at $\lambda^\ast$. The case $N=3$ was treated earlier by Gel'fand \cite{MR0153960}. 
 When $N\geq 10$, the branch consists of minimal solutions for $0<\lambda <\lambda^\ast$ 
 with an asymptote at $\lambda = \lambda^\ast$. 
 If $g(U)=(1+U)^p$, then in \cite{josephlundgren} a special exponent $p_{JL}$ was found, namely
\begin{equation}
\label{defpjl}
p_{JL}=\begin{cases} 1+\dfrac{4}{N-4-2\sqrt{N-1}},& \text{when}\ N\geq 11, \\ \infty ,& \text{when} \ 2\leq N\leq 10, \end{cases}
\end{equation}
and it was proved that when $p_S < p < p_{JL}$, the branch emanating from $(U,\lambda)=(0,0)$ has infinitely many turning points around $\lambda^\ast =\theta (N-2-\theta)$, $\theta = \dfrac{2}{p-1}$ and blows up at $\lambda^\ast$ (the singular solution is given by $U^\ast = r^{-\theta } -1$), whereas if $p\geq p_{JL}$, the branch exists for all $0<\lambda<\lambda^\ast$, does not oscillate 
and blows up at $\lambda^\ast$. These results were extended to more general nonlinearities, see 
for instance \cite{miya2}, where the author considered nonlinearlity of the form 
\begin{gather}
g(u)=e^u +h(u), 
\end{gather}
with $h$ being a smooth lower order term. 

For analogous Neumann problem we are only aware of \cite{miya2}, 
where the author studied the structure of positive radial solutions $u_\lambda$ of 
\begin{equation}\label{ncm}
\left\{
\begin{aligned}
- \Delta u_\lambda  &= \lambda(u_\lambda^p - u_\lambda) ,& \qquad &\text{in} \ B_1,\\
\partial_\nu u_\lambda &= 0, &\ &\text{on} \ \partial B_1,
\end{aligned}
\right.
\end{equation}
that bifurcate from the trivial solution $1$. The exponent $p > \frac{N+2}{N-2}$ is fixed here. Problem \eqref{ncm} as well as \eqref{eqjl}  
possesses a crucial scaling, that allows 
for exchange of the parameter $\lambda$ for the size of the domain. 
More precisely, if $u_\lambda(\cdot)$ solves an appropriate problem on $B_R$ with parameter $\lambda$, 
then $u_{\lambda}(\alpha \cdot)$ solves the same problem on $B_{R/\alpha}$ with parameter $\alpha^2 \lambda$. This 
property allows for a construction of explicit singular solutions as well as proofs of various important non-degeneracy properties. 

Neumann problems even with scale invariance are more complicated than Dirichlet ones since 
there might be several bifurcation branches that contain positive solutions. In fact we show below that
there are infinitely many such branches. Also, radial eigenfunctions of Laplacian with Neumann boundary conditions
correspond to large eigenvalues.

\medbreak

In our problem \eqref{eq:intro_v_lambda} due to the presence of the zero order term, we cannot rely on any scaling or 
transformation that removes the parameter $\lambda$ from the equation. 
Moreover, the constant equilibria depend on $\lambda$ and after appropriate normalization (cf. \eqref{eq:intro_u_mu}) the 
parameter appears in the exponent of the nonlinearity, which introduces a novel parameter dependent problem.

To study the behavior of radial solutions for fixed parameter $\lambda > 0$, 
we first show that as the value of a solution at the origin increases, 
it converges to a solution $U^*_\lambda$ satisfying the same problem with an explicit singularity at the origin. 
The existence and uniqueness of $U^*_\lambda$ is shown on $(0, \infty)$, and in order to prove the existence of 
singular solution on a finite interval with appropriate boundary conditions we first show that $U^*_\lambda$
has infinitely many critical points. In other words, we show that for fixed $\lambda$, the restriction of 
$U^*_\lambda$ satisfies Neumann problem on infinitely many balls. More precisely, we prove that $U^*_\lambda$
oscillates around $\bar{u}_\lambda$. 

Before we formulate our first result, let us recall that $\bar{u}_\lambda$ is the largest solution of $u = \lambda e^u$. 

\begin{thm}
\label{KSsingintro}
For any $N\geq 3$ and $\lambda > 0$,  
there exists $U^\ast_\lambda = U^\ast > 0$ satisfying, for each $\delta \in (0, 1)$,
\begin{equation}
\label{KSwhole}
\left\{
\begin{aligned}
&- u^{\prime \prime}- \dfrac{N-1}{r}u^\prime +u = \lambda e^u &\qquad &\text{on} \ \R^+\\ 
&u(r) = -2\ln r +\ln \dfrac{2(N-2)}{\lambda}+ O(r^{2\delta}) & \qquad \ &\text{when} \ r\rightarrow 0. 
\end{aligned}
\right.
\end{equation}
Moreover, a solution satisfying the equation in \eqref{KSwhole} with boundary conditions 
\begin{equation}\label{gasy}
u(r) = -2\ln r +\ln \dfrac{2(N-2)}{\lambda}+ O(1)
\end{equation} 
is unique. 
In addition, if
\begin{equation}
\lambda < \lambda_N^* :=
\begin{cases}
0.16 & N = 3 \,,\\
0.35 & N = 4 \,, \\
0.36 & N = 5 \,, \\
\frac{1}{e} & N > 5 \,,
\end{cases}
\end{equation}
then $U^\ast$ attains infinitely many times the value $\overline{u}_\lambda$.
Furthermore, if there are sequences $(\gamma_n)_{n = 1}^\infty$ and $(\lambda_n)_{n = 1}^\infty$ with
$\gamma_n \rightarrow \infty$ and $\lambda_n \rightarrow \lambda_\infty \in (0, \infty)$, 
then
$u_n \rightarrow U^\ast$ in $C^0_{loc}((0,\infty))$, where $u_n$
is the solution to
\begin{equation}
\label{KSeqintfin}
\left\{
\begin{gathered}
- u^{\prime \prime}- \dfrac{N-1}{r}u^\prime +u = \lambda_n e^u \qquad \text{on}\ \R^+ \,,\\ 
u(0)=\gamma_n\,, \qquad u^\prime (0)=0 
\end{gathered}
\right.
\end{equation}
and $U^\ast$ satisfies \eqref{KSwhole} and \eqref{gasy} with $\lambda = \lambda_\infty$.
\end{thm}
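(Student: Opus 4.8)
The plan is to handle the four assertions of Theorem \ref{KSsingintro}---existence of $U^\ast$, its uniqueness, the infinitely many times it attains $\bar u_\lambda$, and the convergence of the regular solutions---one at a time.

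\emph{Existence and uniqueness.} I would seek $U^\ast$ of the form $U^\ast(r)=-2\ln r+\ln\frac{2(N-2)}{\lambda}+w(r)$. Since $-2\ln r+\ln\frac{2(N-2)}{\lambda}$ solves $-\Delta u=\lambda e^u$ identically, $w$ must satisfy
\[ -w''-\frac{N-1}{r}w'+w-\frac{2(N-2)}{r^{2}}w=\frac{2(N-2)}{r^{2}}\bigl(e^{w}-1-w\bigr)+2\ln r-\ln\frac{2(N-2)}{\lambda}\,. \]
The operator on the left has indicial exponents $\alpha_\pm=\frac12\bigl(-(N-2)\pm\sqrt{(N-2)(N-10)}\bigr)$, both with negative real part, so its two homogeneous solutions blow up at the origin and the Green operator $T$ that integrates from $0$ is well defined. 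On a short interval $(0,r_0]$ I would solve the fixed point problem $w=T\!\left[\frac{2(N-2)}{r^{2}}(e^w-1-w)+2\ln r-\mathrm{const}\right]$ by a contraction in the weighted space with norm $\sup_{0<r\le r_0}r^{-2\delta}|w(r)|$: the constant and logarithmic source terms are sent by $T$ to functions of size $O(r^2\ln r)=o(r^{2\delta})$, while the quadratic term $\frac{2(N-2)}{r^{2}}(e^w-1-w)$ gains a power, so for $r_0$ small the map contracts a small ball and produces $w=O(r^{2\delta})$ for every $\delta\in(0,1)$. The solution then extends to $(0,\infty)$ by ODE theory: multiplying the equation by $u'$ shows that $-\frac12(U^{\ast\prime})^2+\frac12(U^\ast)^2-\lambda e^{U^\ast}$ is nondecreasing in $r$, hence $U^\ast$ is bounded above and cannot blow up at finite $r$; positivity follows from a comparison argument, using that all constant solutions are positive. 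For uniqueness, if $U_i=-2\ln r+\ln\frac{2(N-2)}{\lambda}+w_i$ with $w_i=O(1)$, then necessarily $w_i(r)\to0$ as $r\to0$ (otherwise, for $a:=\lim w_i\neq0$, the equation forces $w_i'\sim-2(e^a-1)/r$ and hence $w_i\sim-2(e^a-1)\ln r$, contradicting boundedness); a Volterra-type estimate through $T$ together with $|e^w-1-w|\lesssim w^2$ upgrades this to $w_i=O(r^{2-\varepsilon})$; and then the difference $v=w_1-w_2$ solves a linear equation with $r^{-2}$-singular potential and right-hand side bounded by $Cr^{-\varepsilon}|v|$, so iterating the representation $v=T[\cdots]$ yields $|v(r)|\le C_n r^{\gamma_n}$ with $\gamma_n\to\infty$ near $0$, forcing $v\equiv0$ there and, by unique continuation for the regular ODE, on all of $(0,\infty)$.

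\emph{Infinitely many attained values $\bar u_\lambda$.} Put $z=U^\ast-\bar u_\lambda$; using $\lambda e^{\bar u_\lambda}=\bar u_\lambda$ this solves $z''+\frac{N-1}{r}z'+f(z)=0$ with $f(z)=\bar u_\lambda(e^z-1)-z$. Its primitive $F(z)=\int_0^z f$ has exactly two critical points: a minimum at $z=0$ with $F(0)=0$, and a maximum at $z_\ast:=\underline u_\lambda-\bar u_\lambda<0$ with the explicit value $F(z_\ast)=\frac12\bigl[(\bar u_\lambda-1)^2-(1-\underline u_\lambda)^2\bigr]>0$, while $F\to-\infty$ as $z\to-\infty$. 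The energy $E(r)=\frac12 z'(r)^2+F(z(r))$ is nonincreasing, $E'(r)=-\frac{N-1}{r}z'^2$. The quantitative core of the argument is to show that if $\lambda<\lambda_N^\ast$ then there is a radius $r_0$ with $E(r_0)<F(z_\ast)$: one estimates $E$ at, say, the first radius $r_1$ where $U^\ast=\bar u_\lambda$ (there $E(r_1)=\frac12 z'(r_1)^2$), bounding $z'(r_1)$ through the origin asymptotics $z\sim-2\ln r$, $z'\sim-2/r$ and the identity $r^{N-1}z'(r)=-\int_0^r s^{N-1}f(z(s))\,ds$ (which also pins $r_1\approx\sqrt{2(N-2)/\bar u_\lambda}$), and compares the resulting bound with $F(z_\ast)$; the thresholds $\lambda_N^\ast$ are precisely where the comparison reverses, and the stronger damping in high dimension is what allows $\lambda_N^\ast=1/e$ for $N>5$. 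Granting $E(r_0)<F(z_\ast)$: since $U^\ast>0$, $z$ is bounded below and therefore never crosses $z_\ast$ downward (if it did, it would stay below $z_\ast$, decrease, and tend to $-\infty$, since $f$ has no zero there), so $z(r)>z_\ast$ for all $r$; combined with $F(z(r))\le E(r)\le E(r_0)<F(z_\ast)$ this traps $z$, for $r\ge r_0$, in a compact subinterval $[\zeta_1,\zeta_2]$ of the potential well around $0$ on which $f(z)/z\ge c>0$. Writing the equation as $z''+\frac{N-1}{r}z'+q(r)z=0$ with $q(r)=f(z(r))/z(r)\ge c$, a Sturm comparison with the (infinitely oscillatory) equation $y''+\frac{N-1}{r}y'+cy=0$ shows $z$ vanishes infinitely often. (One checks directly that $z$ cannot be eventually of one sign: if eventually positive, $z$ is bounded and tends to the only equilibrium $0$, whence a Sturm argument near $z=0$ is contradictory; the sole remaining non-oscillatory possibility is $z\to z_\ast$, i.e.\ $U^\ast\to\underline u_\lambda$, which $E(r_0)<F(z_\ast)$ excludes because then $E\to F(z_\ast)$ against $E$ being nonincreasing.)

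\emph{Convergence $u_n\to U^\ast$.} Near the origin I would rescale: with $\rho=\sqrt{\lambda_n}\,e^{\gamma_n/2}\,r$ and $u_n(r)=\gamma_n+\phi_n(\rho)$, the equation becomes $-\phi_n''-\frac{N-1}{\rho}\phi_n'+(\gamma_n+\phi_n)/(\lambda_n e^{\gamma_n})=e^{\phi_n}$ with $\phi_n(0)=\phi_n'(0)=0$; since the lower-order term tends to $0$, $\phi_n\to\phi_\infty$ in $C^2_{loc}([0,\infty))$, where $\phi_\infty$ is the radial Liouville profile ($-\Delta\phi_\infty=e^{\phi_\infty}$, $\phi_\infty(0)=\phi_\infty'(0)=0$), which obeys $\phi_\infty(\rho)=-2\ln\rho+\ln2(N-2)+o(1)$ as $\rho\to\infty$. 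Setting $r_n=\rho_0/(\sqrt{\lambda_n}e^{\gamma_n/2})\to0$ for a fixed large $\rho_0$, this gives $u_n(r_n)=-2\ln r_n+\ln\frac{2(N-2)}{\lambda_\infty}+o(1)$ and $u_n'(r_n)=-2/r_n+o(1/r_n)$, i.e.\ the data of $u_n$ at $r_n$ match, to leading order, those of $U^\ast=U^\ast_{\lambda_\infty}$. Combining this with uniform a priori bounds for $u_n$ on compact subsets of $(0,\infty)$ (giving $C^0_{loc}$-subconvergence to a solution $\bar u$ of the $\lambda_\infty$-equation), the matched behaviour at $r_n$ pins the behaviour of $\bar u$ near $0$ to \eqref{gasy}, so the uniqueness part gives $\bar u=U^\ast$; since this limit is independent of the subsequence, $u_n\to U^\ast$ in $C^0_{loc}((0,\infty))$, and in particular $U^\ast$ satisfies \eqref{KSwhole}, \eqref{gasy} with $\lambda=\lambda_\infty$, as claimed.

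\emph{Main obstacle.} I expect the decisive difficulty to be the quantitative energy estimate in the oscillation step---producing a radius at which $E$ has fallen below the barrier $F(z_\ast)$---since that is where the sharp, dimension-dependent constants $\lambda_N^\ast$ are generated and where a crude leading-order estimate is not enough near $\lambda=1/e$; the matched-asymptotics and singular continuous-dependence argument at $r=0$ in the convergence step is the second delicate point.
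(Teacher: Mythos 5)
Your existence--uniqueness construction and your rescaling argument for $u_n\to U^\ast$ are, up to the change of variables $\zeta=\ln (m/r)$, the paper's own scheme (contraction for the remainder near the singularity, then identification of the limit through the uniqueness class \eqref{gasy}), so the decisive question is whether your two ``delicate points'' can actually be closed -- and there the proposal has a genuine gap. The oscillation step is the heart of the theorem: the paper rules out $U^\ast$ touching or approaching $\underline{u}_\lambda$ by a Pohozaev-type identity (Lemma \ref{liouville}): multiplying the equation for $v=U^\ast-\underline{u}_\lambda$ by $r^Nv'$ and by $r^{N-1}v$ and combining, the contradiction reduces to positivity of $x^2-\underline{u}_\lambda\bigl(N(e^x-1-x)-\frac{N-2}{2}x(e^x-1)\bigr)$, and the convexity analysis of this one function is exactly what produces $\lambda_N^\ast$ (automatic for $N\geq 6$ since $\underline{u}_\lambda<1$, and the numbers $0.16,0.35,0.36$ for $N=3,4,5$). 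You replace this by the criterion ``$E(r_0)<F(z_\ast)$ for some $r_0$'', which you do not prove, and the route you sketch -- estimate $E$ at the first crossing $r_1$ of $\bar u_\lambda$ via the origin asymptotics and compare with $F(z_\ast)$ -- cannot yield the stated thresholds. Indeed $F(z_\ast)=\tfrac12(\bar u_\lambda-\underline{u}_\lambda)(\bar u_\lambda+\underline{u}_\lambda-2)\to 0$ as $\lambda\to 1/e$, while $E(r_1)=\tfrac12 z'(r_1)^2$ stays bounded away from zero (the singular solution crosses $\bar u_\lambda\approx 1$ at a radius of order $\sqrt{2(N-2)}$ with slope of order one), so $E(r_1)<F(z_\ast)$ fails for every $N>5$ with $\lambda$ near $1/e$, and already for $N=5$, $\lambda=0.36$. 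Taking $r_0$ larger does not help in any soft way: showing that the damped energy eventually falls below the barrier is equivalent to excluding the saddle connection $U^\ast\to\underline{u}_\lambda$, which is precisely what the Pohozaev lemma does; and your heuristic that ``stronger damping'' gives $\lambda_N^\ast=1/e$ for $N>5$ does not match the actual mechanism, which is purely the algebra of the identity. So the quantitative core -- where all the dimension-dependent constants come from -- is missing, and the specific mechanism proposed for it is demonstrably insufficient.

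There is a second, smaller gap in the convergence step: matching $u_n$ to the Liouville profile at a single scale $r_n\to 0$ and then extracting a $C^0_{loc}((0,\infty))$ limit does not ``pin the behaviour of $\bar u$ near $0$'', because the limit on compact subsets of $(0,\infty)$ never sees the matching points $r_n$. One needs a uniform-in-$n$ estimate propagating the closeness from the matching scale up to a fixed radius; in the paper this is Lemma \ref{KSlemstagamma}, a trapping argument with the modified energy $\tilde E_n$ in the $\eta$-variables showing that once $(\eta_n,\eta_n')$ enters a small sublevel set it remains there on the whole interval down to a fixed $\zeta^\ast$ independent of $n$. Without an analogue of this lemma, identifying the limit through the uniqueness class \eqref{gasy} is not justified. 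Two minor points in the same spirit: your passage from the $O(1)$ class to $w_i\to 0$ presumes that $\lim_{r\to 0}w_i$ exists, which an $O(1)$ hypothesis does not give; and positivity of $U^\ast$ is not a soft comparison with constant solutions -- in the paper it is again a consequence of $U^\ast>\underline{u}_\lambda$ from Lemma \ref{liouville}, so quoting it inside the oscillation argument (to keep $z$ bounded below) would be circular, although your energy-barrier trapping, once $E<F(z_\ast)$ were actually established at a point where $z>z_\ast$, would make that invocation unnecessary.
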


We require the restriction $\lambda < 1/e \approx 0.37$ to guarantee 
that the nonlinearity $\lambda e^u - u$ changes sign, since otherwise due to compatibility condition, the solution $U_\lambda^*$
cannot have critical points. Also, if $\lambda < 1/e$, then 
there are  two solutions of $u = \lambda e^u$, or equivalently,  
two constant equilibria of \eqref{eq:intro_v_lambda}. 
We believe that the additional restriction on $\lambda$ in lower dimensions is technical (see Lemma \ref{liouville} below) and the result should hold without it. However, since we are interested in the asymptotes of bifurcation branches,  that is, in small $\lambda$, 
this assumption does not cause any problems below. 
Theorem \ref{KSsingintro} implies that there exists an increasing, unbounded sequence of positive real numbers 
$(R_\lambda^i )_{i=1}^ \infty$ 
depending on $N$ and $\lambda > 0$ such that
$(U^\ast_\lambda)^\prime (R_\lambda^i)=0$, that is, $U^\ast_\lambda$ satisfies Neumann boundary conditions
on $\partial B_{R_\lambda^i}$. 
Consequently,
$U_\lambda^\ast$ is a singular radial solution to \eqref{eq:intro_v_lambda} in the ball of radius $R_\lambda^i$, $i\in \N$.

Our next main result states that if the radius $R$ and any large integer $i > 1$ are fixed, 
we can choose $\lambda > 0$ such that $R^i_\lambda = R$, that is, $U^*_\lambda$ has prescribed number of intersections with 
$\bar{u}_\lambda$ on $B_R$.
Note that this result does not follow from a rescalling of the domain, since our equation is not scaling invariant. Clearly, such singular solutions have 
exactly $i$ critical points (including the one on the boundary).

\begin{thm}
\label{KSmainthm}
Assume $N\geq 3$ and let $R>0$. 
Fix any $\tilde \lambda \in (0, \lambda^*_N)$ (cf. Theorem \ref{KSsingintro}) and let 
$U^*_{\tilde{\lambda}}$ be the function constructed in 
Theorem \ref{KSsingintro}. Denote by
$(R_{\tilde \lambda}^{i})_{i \in \N}$ the increasing sequence such that  $(U^*_{\tilde{\lambda}})' (R_{\tilde \lambda}^{i}) = 0$
and let
$i^\ast$ be the 
smallest integer such that $R_{\tilde \lambda}^{ i^\ast}>R$.  
Then, for any $i\geq i^\ast$, there exists $\lambda^i>0$ such that
$$
R^i_{\lambda^i} =R.
$$
In particular,  for any $i\geq i^\ast$, there exists $\lambda^i >0$ such that the equation \eqref{eq:intro_v_lambda} admits a singular radial solution $U_{\lambda^i}^\ast$ satisfying
$$
\sharp \{r\in (0,1)| U^\ast_{\lambda^i} (r)=\overline{u}_{\lambda^i} \}=i.
$$
\end{thm}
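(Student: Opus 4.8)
The plan is to treat $R^i_\lambda$ as a function of $\lambda$ on the interval $(0,\lambda^*_N)$ and exploit the two limiting regimes $\lambda \to 0^+$ and $\lambda \to (\lambda^*_N)^-$ (or, more conveniently, simply $\lambda$ close to $\tilde\lambda$) together with a continuity argument to hit the prescribed value $R$ exactly. Fix $i \geq i^\ast$. By construction $R^i_{\tilde\lambda} > R$, so we already have a value of $\lambda$, namely $\tilde\lambda$, for which the $i$-th critical radius exceeds $R$. If we can produce \emph{some} $\lambda_0 \in (0,\tilde\lambda)$ for which $R^i_{\lambda_0} < R$, then provided $\lambda \mapsto R^i_\lambda$ is continuous on $[\lambda_0,\tilde\lambda]$, the intermediate value theorem yields $\lambda^i \in (\lambda_0,\tilde\lambda)$ with $R^i_{\lambda^i} = R$, which is exactly the claim; the count of intersections with $\bar u_{\lambda^i}$ being $i$ then follows because, by Theorem \ref{KSsingintro}, the critical points of $U^*_{\lambda^i}$ interlace with the points where $U^*_{\lambda^i} = \bar u_{\lambda^i}$ (each monotone arc between consecutive critical points crosses the level $\bar u_{\lambda^i}$ exactly once), so $i$ critical points in $(0,R]$ forces $i$ crossings in $(0,R)$.

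The first key step is \textbf{continuity of $\lambda \mapsto U^*_\lambda$}, and hence of $\lambda \mapsto R^i_\lambda$. For continuity of the singular solution itself one can invoke the last part of Theorem \ref{KSsingintro}: if $\lambda_n \to \lambda_\infty \in (0,\infty)$ and we also let $\gamma_n \to \infty$, then the regular solutions $u_n$ of \eqref{KSeqintfin} converge in $C^0_{loc}((0,\infty))$ to $U^*_{\lambda_\infty}$; combining this with the analogous convergence $u_n \to U^*_{\lambda_n}$ along a diagonal subsequence gives $U^*_{\lambda_n} \to U^*_{\lambda_\infty}$ in $C^0_{loc}((0,\infty))$. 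To upgrade $C^0_{loc}$ convergence to convergence of the critical radii one uses interior elliptic (ODE) estimates on any compact subinterval $[\varepsilon,M] \subset (0,\infty)$ to get $C^1_{loc}$, indeed $C^2_{loc}$, convergence; since $U^*_{\lambda}$ oscillates around $\bar u_\lambda$ with \emph{transversal} (nondegenerate) critical points — at a critical point $r_0$ one has from the equation $U^{*\prime\prime}_\lambda(r_0) = U^*_\lambda(r_0) - \lambda e^{U^*_\lambda(r_0)} \ne 0$ unless $U^*_\lambda(r_0) = \bar u_\lambda$ or $\underline u_\lambda$, and at the oscillation points around $\bar u_\lambda$ the value is not exactly $\bar u_\lambda$ at a critical point — the critical points depend continuously on $\lambda$ by the implicit function theorem, and no critical point can escape to $0$ or $\infty$ on a compact $\lambda$-range because the oscillation is controlled. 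This gives continuity (in fact local $C^1$ dependence) of each $R^i_\lambda$.

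The second, and I expect \textbf{main}, step is producing the value $\lambda_0$ with $R^i_{\lambda_0} < R$; equivalently, showing that the oscillation of $U^*_\lambda$ around $\bar u_\lambda$ \emph{speeds up} as $\lambda \to 0^+$, so that arbitrarily many critical points are packed into any fixed ball $B_R$ once $\lambda$ is small enough. Heuristically this is the super-criticality making itself felt: near the origin $U^*_\lambda(r) \approx -2\ln r + \ln\frac{2(N-2)}{\lambda}$, so $\bar u_\lambda \to \infty$ as $\lambda \to 0^+$ and the ``matching region'' where $U^*_\lambda$ meets the level $\bar u_\lambda$ occurs at a radius $r \sim \sqrt{2(N-2)}\,e^{-\bar u_\lambda/2}/\sqrt\lambda$ that tends to $0$; linearizing the equation around $\bar u_\lambda$ on the scale of this region produces an oscillatory (Bessel-type) equation whose number of zeros on $(0,R)$ grows without bound. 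Making this rigorous is the crux: one should rescale $v = U^*_\lambda - \bar u_\lambda$ and an appropriate spatial variable, show the rescaled equation converges to the limiting linear equation governing the oscillations (the same one analyzed in the proof of the ``infinitely many critical points'' part of Theorem \ref{KSsingintro}), and use a Sturm-type comparison to count zeros; alternatively one can directly compare $U^*_\lambda$ on $(0,R)$ with a solution of a linear equation with a large coefficient and invoke the Sturm comparison theorem. I would expect the argument used to prove the last assertion of Theorem \ref{KSsingintro} to be reusable almost verbatim, with $\lambda \to 0$ playing the role that made the count infinite, now giving a \emph{rate} for how the count blows up.

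Finally, assembling the pieces: pick $\lambda_0 \in (0,\tilde\lambda)$ small enough that $R^i_{\lambda_0} < R$ (possible by the second step, since for small $\lambda$ even $R^i_\lambda$ for this fixed $i$ is small), note $R^i_{\tilde\lambda} > R$, apply continuity on $[\lambda_0,\tilde\lambda]$ to obtain $\lambda^i$ with $R^i_{\lambda^i} = R$, rescale back from \eqref{eq:intro_u_mu}/\eqref{KSwhole} to \eqref{eq:intro_v_lambda} to view $U^*_{\lambda^i}$ as a singular radial solution of \eqref{eq:intro_v_lambda} on $B_R = B_1$ (after the normalization $R=1$ used in the statement), and read off the intersection count from the interlacing of critical points and level-$\bar u_{\lambda^i}$ crossings. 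The only subtlety to double-check is that $\lambda^i < \lambda^*_N$ automatically (it lies in $(\lambda_0,\tilde\lambda) \subset (0,\lambda^*_N)$), so the hypotheses of Theorem \ref{KSsingintro} guaranteeing the oscillatory behavior remain in force throughout.
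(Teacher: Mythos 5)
Your overall architecture is exactly the paper's: prove continuity of $\lambda \mapsto R^i_\lambda$, prove that $R^i_\lambda$ drops below $R$ for small $\lambda$, and conclude by the intermediate value theorem between some small $\lambda_0$ and $\tilde\lambda$; the interlacing argument identifying the number of crossings of $\bar u_{\lambda^i}$ with the number of critical points is also fine (local maxima have values above $\bar u_\lambda$, local minima below, by the sign of $u''$ at a critical point and Lemma \ref{liouville}). Your route to continuity (diagonal argument through the convergence statement of Theorem \ref{KSsingintro}, then elliptic upgrade and nondegeneracy of critical points) is a legitimate variant of the paper's Proposition \ref{KScontmp}, although you gloss over why no critical point can emerge from, or escape to, the origin as $\lambda$ varies -- the paper rules this out with uniform-in-$\lambda$ estimates of $(U^*_\lambda)'$ near $r=0$ (Remarks \ref{pbou}--\ref{deub}), and without some such uniform statement the labelling of $R^i_\lambda$ could in principle jump.

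The genuine gap is the second step, which you yourself flag as the crux but do not prove: showing $R^i_\lambda \to 0$ as $\lambda \to 0^+$ (Proposition \ref{KSpinfty}, which occupies most of Section \ref{sec5} and which the introduction identifies as the main technical challenge of the paper). Your suggestion that the argument proving infinitely many oscillations in Theorem \ref{KSsingintro} is reusable ``almost verbatim'' does not work: that argument (Lemma \ref{corbehazero}) compares with a constant coefficient $\varepsilon_1$ that depends on $\lambda$ through $\inf U^* > \underline u_\lambda$ and only yields zeros for $r$ larger than some $\lambda$-dependent radius; it says nothing about where the \emph{first} critical point sits as $\lambda \to 0$, which is what the theorem requires. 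To run a Sturm--Picone comparison with a large coefficient on a \emph{fixed} interval $[a/4,a]$, one must first know that $v=U^*_\lambda/\bar u_\lambda$ stays uniformly close to $1$ there, and the paper obtains this only after a long chain of quantitative estimates: sharpened asymptotics $\eta = f + \tilde\eta$ with $0\le \eta\le f$ valid on an interval reaching the first crossing $r_\lambda$ of the level $\bar u_\lambda$ (Steps 1--7), yielding $r_\lambda^2 \lesssim 1/\bar u_\lambda \to 0$ and $|(U^*_\lambda)'(r_\lambda)| \lesssim \sqrt{\bar u_\lambda}$, and then a Lyapunov-functional argument giving $\sup_{r\ge r_\lambda}(1-v)_+ \to 0$. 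Your rescaling heuristic correctly predicts the scale $r_\lambda \sim \bar u_\lambda^{-1/2}$, but none of the estimates needed to make it rigorous are supplied, so as written the proof of the key limit \eqref{LNTmaine1intro} is missing.
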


 Once the existence of singular solutions on bounded domains is established, we turn our attention to the 
character of bifurcation branches parametrized by the value of solutions at the origin. 

First we claim that the branch $\mathcal{B}_i^+$ (see Theorem \ref{thm:bifurcation}) is bounded in $\mu$, that is, 
$\mathcal{B}_i^+ \subset (0, C_i) \times (1, \infty)$, where $C_i$ depends only on $i$. 
Indeed, by testing \eqref{eq:intro_v_lambda} with $v$ we see  that there is no positive solution if $\lambda \leq 0$ and
therefore by \eqref{ceqs} no solution of \eqref{eq:intro_u_mu} if $\mu \leq 0$. 
Next, let $u$ be a solution to \eqref{eq:intro_u_mu} such that $u(0)>1$. Then setting $\tilde{u}=u-1$, we see that
$$
-\Delta \tilde{u} +\tilde{u} = e^{\mu \tilde{u}}-1 \geq \mu \tilde{u}.
$$
Hence, the Sturm-Picone comparison theorem implies that $\tilde{u}-1$ has 
arbitrary large number of zeros if  $\mu$ is large. However, since number of zeros is constant along $\mathcal{B}_i^+$ (cf. Theorem \ref{thm:bifurcation}), the claim follows. 

However, the branch $\mathcal{B}_i^+$ (see Theorem \ref{thm:bifurcation}) is unbounded, and therefore by Theorem \ref{KSsingintro},
$\mathcal{B}_i^+$ (up to sub-sequence) converge to singular solutions. 
Next, we turn our attention to asymptotic behavior of $\mathcal{B}_i^+$.

To formulate the next result, for given $\lambda, \gamma > 0$ we denote by $(r_{\lambda,\gamma}^i)_i$ 
the increasing sequence 
satisfying $u'(r_{\lambda,\gamma}^i , \gamma) = 0$, where $u(\cdot, \gamma)$ is the unique solution to \eqref{KSeqintfin}.
Note that if $u(\cdot, \gamma)$ is non-constant, its critical points are necessarily discrete and the sequence $(r_{\lambda, \gamma}^i)_{i}$
is either finite or countable. 

The following theorem gives a strong indication that for each $i \geq 1$,  the branch $\mathcal{B}_i^+$ 
oscillates around $\lambda^i$ when $3\leq N\leq 9$. 
Below we show that the oscillations of $\mathcal{B}_i^+$ indeed take place for a generic radius. 

\begin{thm}
\label{KSoscbranch}
Fix $3\leq N\leq 9$, $R>0$, $i \geq i^*$ (see Theorem \ref{KSmainthm}), and let $\lambda^i>0$ be the positive real number given in Theorem \ref{KSmainthm}. Then, there exists a sequence of initial data $(\gamma_n)_n$ with $\gamma_n \rightarrow \infty$ and a sequence of positive integer $(j_n)_n$ such that $r_{\lambda^i ,\gamma_n}^{j_n}=R$. 
\end{thm}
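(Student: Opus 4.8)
The plan is to exploit the convergence statement from Theorem \ref{KSsingintro} together with a continuity/intermediate-value argument in the initial datum $\gamma$. Recall that by Theorem \ref{KSmainthm} the singular solution $U^*_{\lambda^i}$ satisfies $(U^*_{\lambda^i})'(R) = 0$ and, among the zeros $R^j_{\lambda^i}$ of $(U^*_{\lambda^i})'$, the value $R$ is attained, say $R = R^{i}_{\lambda^i}$ with $i$ critical points in $(0,R]$ (including the boundary one). The key dynamical input in dimensions $3 \leq N \leq 9$ is that the convergence $u(\cdot,\gamma_n) \to U^*_{\lambda^i}$ in $C^0_{loc}((0,\infty))$ must be accompanied by a transfer of oscillations: since $U^*_{\lambda^i}$ crosses $\bar u_{\lambda^i}$ transversally at interior points and oscillates, for $\gamma$ large the regular solution $u(\cdot,\gamma)$ must have at least as many (in fact, increasingly many) intersections with $\bar u_{\lambda^i}$ and critical points on any fixed compact subinterval of $(0,\infty)$ — this is precisely where $N \leq 9$ enters, guaranteeing the singular solution is a "fast oscillator" in the sense of infinite Morse index, rather than the $N > 10$ regime.

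First I would set $\lambda = \lambda^i$ fixed and study the map $\gamma \mapsto r^j_{\lambda^i,\gamma}$ for $\gamma$ in a large interval $[\gamma_-, \gamma_+]$. I would fix a compact interval $[a,b] \subset (0,\infty)$ with $R \in (a,b)$ and $[a,b]$ strictly inside the region where $U^*_{\lambda^i}$ has been shown to oscillate. By the $C^0_{loc}$ convergence together with the ODE (which upgrades $C^0$ convergence to $C^1_{loc}$, hence $C^2_{loc}$ on $(0,\infty)$ away from the singularity via elliptic regularity / the explicit equation), the critical points of $u(\cdot,\gamma_n)$ in $[a,b]$ converge to those of $U^*_{\lambda^i}$, and no critical points are lost or created in the limit provided the limiting critical points are nondegenerate — which follows from transversality of the crossings with $\bar u_{\lambda^i}$ (at a critical point $r_0$ with $u'(r_0)=0$, the equation gives $u''(r_0) = u(r_0) - \lambda e^{u(r_0)}$, which is nonzero unless $u(r_0) = \bar u_{\lambda^i}$, and the oscillation analysis excludes tangency at the equilibrium level). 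Hence for $n$ large there is an index $j_n$, with $j_n \to \infty$, and a critical point $r^{j_n}_{\lambda^i,\gamma_n}$ with $r^{j_n}_{\lambda^i,\gamma_n} \to R$.

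To upgrade "close to $R$" to "exactly $R$" I would use continuity of $\gamma \mapsto r^{j}_{\lambda^i,\gamma}$ on an interval where the $j$-th critical point persists, together with a sweeping argument: as $\gamma$ increases through a suitable range, the $j$-th critical point $r^j_{\lambda^i,\gamma}$ varies continuously and, by the oscillation mechanism, sweeps across $R$ (one side comes from $\gamma$ close to the value $\gamma$ for which $r^{j}_{\lambda^i,\gamma}$ is first defined — where it is small — and the other side from the convergence to $U^*$ giving a value near $R$, or near an adjacent critical point of $U^*$ on the other side of $R$). Applying the intermediate value theorem yields $\gamma_n$ with $r^{j_n}_{\lambda^i,\gamma_n} = R$ exactly; letting $n \to \infty$ forces $\gamma_n \to \infty$ because otherwise a subsequence of the $u(\cdot,\gamma_n)$ would converge to a regular solution with finitely many critical points, contradicting $j_n \to \infty$.

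The main obstacle I anticipate is controlling the behavior near the origin to justify that the convergence $u_n \to U^*$ is strong enough (locally $C^1$, and in particular with matching of critical point counts) on intervals that shrink toward $0$ as $n$ grows — equivalently, ruling out that oscillations "escape" to $r = 0$ and are not captured by any fixed compact window. Theorem \ref{KSsingintro} only asserts $C^0_{loc}((0,\infty))$ convergence for fixed compact sets; extracting $j_n \to \infty$ requires that the number of critical points of $u_n$ in $[a,b]$ tends to infinity, which should follow from a Sturm–Picone comparison using the linearization at $\bar u_{\lambda^i}$ (whose indicial/Hardy constant is supercritical precisely for $N \leq 9$), but making the comparison uniform in $n$ and localized to $[a,b]$ is the delicate point. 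A secondary technical issue is the continuity and monotonicity (or at least controlled variation) of $\gamma \mapsto r^j_{\lambda^i,\gamma}$ near degenerate parameter values, which I would handle by the implicit function theorem away from degeneracies and by the nondegeneracy of the limiting critical points established above.
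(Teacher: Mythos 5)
There is a genuine gap, and it lies exactly at the two places you flag as "delicate". First, your key claim that the oscillations of $U^*_{\lambda^i}$ transfer so that the number of critical points of $u(\cdot,\gamma)$ in a fixed window $[a,b]\subset(0,\infty)$ tends to infinity is not correct, and no Sturm--Picone comparison at $\bar u_{\lambda^i}$ uniform in $\gamma$ can give it: by the $C^1_{loc}$ convergence $u(\cdot,\gamma)\to U^*_{\lambda^i}$ and the nondegeneracy of the critical points of $U^*_{\lambda^i}$ (at a critical point $r_0$ one has $u''(r_0)=u(r_0)-\lambda e^{u(r_0)}\neq 0$ unless $u\equiv\bar u_\lambda$), the number of critical points of $u(\cdot,\gamma)$ in $[a,b]$ is eventually \emph{equal} to that of $U^*_{\lambda^i}$, hence bounded. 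The quantity that does blow up is the number of \emph{intersections of $u(\cdot,\gamma)$ with the singular solution} $U^*_{\lambda^i}$, and these accumulate near the origin at scale $e^{-\gamma/2}$: this is obtained by rescaling $\hat u(\rho)=u(r)-\gamma$, $\rho=e^{\gamma/2}r$, so that $\hat u\to\bar u$ (the Gelfand solution \eqref{defubar}) and $U^*-\gamma\to\bar u^*$ \eqref{defubarast}, and invoking the Joseph--Lundgren count of Proposition \ref{gelfwellknown} together with scale invariance to place arbitrarily many intersections inside $e^{\gamma/2}(0,R)$. Note also that the theorem does not require $j_n\to\infty$; insisting on it pushes the argument in the wrong direction, and your final step "$\gamma_n\to\infty$ because otherwise $j_n\to\infty$ fails" rests on the same flawed count.

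Second, even granting critical points of $u(\cdot,\gamma)$ converging to $R$, convergence does not give equality: the nearby critical point could approach $R$ from one side for all large $\gamma$, and your "sweeping" step supplies no mechanism forcing the sign change needed for the intermediate value theorem. The paper's mechanism is a zero-number argument for the difference $w_\gamma=u(\cdot,\gamma)-U^*_{\lambda^i}$ on $(0,R]$: $w_\gamma$ solves a second-order linear ODE, so its zeros are simple, finite in number, and move continuously in $\gamma$; they cannot be created in the interior nor enter at $r=0$ (where $w_\gamma=-\infty$); since their number tends to infinity by the rescaling argument above, new zeros must enter through the endpoint $R$, i.e.\ $w_{\gamma_k}(R)=0$ along a sequence $\gamma_k\to\infty$, and the slope $w_{\gamma_k}'(R)$ alternates in sign with the parity of the number of interior zeros. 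The intermediate value theorem applied in the parameter $\gamma$ to $\gamma\mapsto w_\gamma'(R)$ then yields $\gamma_k^*\in(\gamma_k,\gamma_{k+1})$ with $w_{\gamma_k^*}'(R)=0$, and since $(U^*_{\lambda^i})'(R)=0$ this gives $u'(R,\gamma_k^*)=0$, i.e.\ $r^{j}_{\lambda^i,\gamma_k^*}=R$ for some index $j$. Without this (or an equivalent) device that converts the unbounded intersection count into sign changes of a boundary quantity, your outline does not close.
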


Another evidence that the branch $\mathcal{B}_i^+$ oscillates around $\lambda^i$ infinitely many times if $3 \leq N \leq 9$
and finitely many times if $N > 10$ is provided by the Morse index of the singular solution. We leave open the border line case $N=10$.
Recall that 
the Morse index of $v$ satisfying \eqref{eq:intro_v_lambda}, denoted $m(v)$, in the space of radial functions 
is the number of negative eigenvalues $\alpha$ (counting multiplicities) of the eigenvalue problem
\begin{equation}
\left\{
\begin{aligned}
-\Delta \phi +\phi - \lambda e^{v}\phi &= \alpha \phi &\qquad &\text{in} \ B_R,\\ 
\partial_\nu \phi &= 0 & &\text{on} \ \partial B_R,\\
 \phi\ \text{is radially} &\text{ symmetric.} && 
\end{aligned}
\right.
\end{equation}
Recall that the Morse index of solutions remains constant along  a bifurcation 
branch unless it has a critical point in $\lambda$. Thus, 
each turning point of a bifurcation 
branch corresponds to a transition of an eigenvalue (of the linearization) across imaginary axis. 
Since the solutions $u(\cdot, \gamma) \to U^*$ as $\gamma \to \infty$, 
the Morse index of $U^*$ indicates the total number of turning points of the branch and combined with Theorem \ref{KSoscbranch},
it suggests the number of intersection points of  $\mathcal{B}_i^+$ with $\lambda^i$.

\begin{prop}
\label{KSmorse}
If $U^\ast_{\lambda^i}$ is a solution to \eqref{eq:intro_v_lambda}, then 
$m(U^\ast_{\lambda^i})<\infty$ when  $N> 10$ while $m(U^\ast_{\lambda^i})=\infty$ when  $3\leq N\leq 9$.
\end{prop}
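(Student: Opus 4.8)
The plan is to read the Morse index directly off the quadratic form associated with the linearization at the singular solution,
\[
Q(\phi) := \int_{B_R}\Big(|\nabla\phi|^2 + \phi^2 - \lambda^i e^{U^*_{\lambda^i}}\phi^2\Big)\,, \qquad \phi\in H^1_{rad}(B_R)\,,
\]
whose maximal negative-definite subspace has dimension exactly $m(U^*_{\lambda^i})$. By the expansion in Theorem \ref{KSsingintro} one has $\lambda^i e^{U^*_{\lambda^i}(r)} = 2(N-2)\,r^{-2}\big(1+O(r^{2\delta})\big)$ as $r\to 0^+$, so everything hinges on whether the Hardy-type potential $2(N-2)/r^2$ is sub- or super-critical. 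The convenient device is the Emden--Fowler substitution: writing $\phi(r)=r^{-(N-2)/2}\psi(-\ln r)$ and taking $\phi$ supported in a small ball $B_\varepsilon$, a direct computation gives
\[
Q(\phi)=\int_{-\ln\varepsilon}^{\infty}\Big(\dot\psi(t)^2-\kappa_N^2\,\psi(t)^2+\rho(t)\,\psi(t)^2\Big)\,dt\,,\qquad \kappa_N^2:=2(N-2)-\Big(\tfrac{N-2}{2}\Big)^2=\tfrac14(N-2)(10-N)\,,
\]
where $\rho(t)=e^{-2t}+O(e^{-2\delta t})\to 0$ as $t\to\infty$ absorbs the zero-order term and the lower-order part of the nonlinearity. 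Thus $\kappa_N^2<0$ when $N>10$ and $\kappa_N^2>0$ when $3\le N\le 9$; this sign change (with the borderline $\kappa_{10}^2=0$, needing logarithmic refinements of Hardy's inequality, being exactly the case left open) is the whole mechanism.

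First suppose $N>10$. Then $2(N-2)<\big(\tfrac{N-2}{2}\big)^2$, so fix $\eta>0$ with $2(N-2)+\eta<\big(\tfrac{N-2}{2}\big)^2$ and $\varepsilon>0$ so small that $\lambda^i e^{U^*_{\lambda^i}}\le (2(N-2)+\eta)\,r^{-2}$ on $B_\varepsilon$. Take a smooth radial partition $\zeta_1^2+\zeta_2^2\equiv 1$ with $\zeta_1\equiv 1$ on $B_{\varepsilon/2}$ and $\operatorname{supp}\zeta_1\subset B_\varepsilon$, and use the IMS localization identity $Q(\phi)=Q(\zeta_1\phi)+Q(\zeta_2\phi)-\int(|\nabla\zeta_1|^2+|\nabla\zeta_2|^2)\phi^2$. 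Since $\zeta_1\phi\in H^1_0(B_\varepsilon)$, the radial Hardy inequality $\int_{B_\varepsilon}|\nabla u|^2\ge\big(\tfrac{N-2}{2}\big)^2\int_{B_\varepsilon}u^2/r^2$ gives $Q(\zeta_1\phi)\ge 0$; and $\zeta_2\phi$ is supported away from the origin, where $\lambda^i e^{U^*_{\lambda^i}}$ is bounded, so $Q(\zeta_2\phi)\ge -C_\varepsilon\|\zeta_2\phi\|_{L^2}^2$. Hence $Q(\phi)\ge -C'_\varepsilon\|\phi\|_{L^2}^2$, so $Q$ is bounded below; moreover the same Hardy estimate shows $\lambda^i e^{U^*_{\lambda^i}}$ is form-bounded relative to $-\Delta$ with relative bound $<1$, so the form is closed with form domain $H^1_{rad}(B_R)$, which embeds compactly in $L^2_{rad}(B_R)$. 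The associated self-adjoint operator therefore has compact resolvent and discrete spectrum bounded below, so only finitely many eigenvalues lie below $0$: $m(U^*_{\lambda^i})<\infty$.

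Now suppose $3\le N\le 9$, so $\kappa_N>0$. Fix $\omega\in(0,\kappa_N)$ and a strictly increasing sequence $a_k\to\infty$ with $a_{k+1}>a_k+\pi/\omega$; set $\psi_k(t)=\sin(\omega(t-a_k))$ on $[a_k,a_k+\pi/\omega]$ and $\psi_k\equiv 0$ elsewhere, and let $\phi_k(r)=r^{-(N-2)/2}\psi_k(-\ln r)$, a Lipschitz (hence $H^1$) function supported in a thin annulus collapsing to the origin, the annuli pairwise disjoint and, for $k$ large, contained in the interior of $B_R$. Using the displayed formula for $Q$, $\int(\dot\psi_k^2-\kappa_N^2\psi_k^2)=\tfrac{\pi}{2\omega}(\omega^2-\kappa_N^2)<0$ while $\big|\int\rho\,\psi_k^2\big|\le\tfrac{\pi}{2\omega}\sup_{t\ge a_k}|\rho(t)|\to 0$, so $Q(\phi_k)<0$ once $k$ is large. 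Discarding finitely many indices, $\{\phi_k\}$ spans an infinite-dimensional subspace of $H^1_{rad}(B_R)$ on which, by disjointness of supports, $Q\big(\sum c_k\phi_k\big)=\sum c_k^2\,Q(\phi_k)<0$; that is, $Q$ is negative definite there. Hence the Morse index is at least $n$ for every $n$, i.e. $m(U^*_{\lambda^i})=\infty$.

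The one genuinely delicate point is the passage from the model Hardy operator to the actual linearization: one must check that the remainder $O(r^{2\delta})$ in the expansion of $U^*_{\lambda^i}$, the zero-order term, and the boundary terms generated by localization and by the Emden--Fowler change of variables are all negligible. This is precisely where the quantitative $r^{2\delta}$ estimate of Theorem \ref{KSsingintro} is used — it yields $\rho(t)=O(e^{-2\delta t})$, a decaying perturbation that is harmless both for the lower bound when $N>10$ and for the sign of $Q(\phi_k)$ on test functions concentrating at $t=\infty$ when $3\le N\le 9$. Everything else is the classical sub-/super-critical Hardy dichotomy, whose threshold $2(N-2)=\big(\tfrac{N-2}{2}\big)^2$ occurs exactly at $N=10$.
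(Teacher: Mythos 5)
Your proof is correct and follows essentially the same route as the paper: the asymptotics of $U^*_{\lambda^i}$ at the origin reduce the problem to the Hardy dichotomy $2(N-2)$ versus $(N-2)^2/4$ (threshold $N=10$), the infinitely many negative directions for $3\le N\le 9$ are produced by disjointly supported functions of the form $r^{-(N-2)/2}\sin(\omega \ln r)$ concentrating at the origin (exactly the paper's $f_j$), and finiteness for $N>10$ comes from Hardy's inequality near the origin combined with boundedness of the potential away from it. The only differences are in the packaging (Emden--Fowler variables, IMS localization and a KLMN/compact-resolvent argument instead of the paper's direct comparison with the truncated-potential Neumann operator), not in the underlying mechanism.
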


Finally, we prove the oscillation of the branches $\mathcal{B}_i^+$ in dimension $3\leq N\leq 9$ for generic radius. 
If the scale invariance is available, then one can show that $\mathcal{B}_i^+$ can be parametrized by the value of 
the solution at the origin, and in particular there are no secondary bifurcations and singular solutions 
are non-degenerate. In our case the situation is much 
more complicated and we rely on Sard's theorem which merely yield results for generic domains.   

First, we show a generic local uniqueness result for singular solutions, which combined with Theorem \ref{KSsingintro}
yields that $\mathcal{B}_i^+$ (and any other branches) converge to discrete set of functions.  
More precisely, 
for generic $R>0$, if $(U^*_{\lambda^*})'(R) = 0$, then $(U^*_{\lambda})'(R) \neq 0$, for 
$\lambda$ close but different to $\lambda^\ast$. In other words, if we have a singular solution on $B_R$ for 
certain $\lambda^*$, then we do not have a singular solution for nearby $\lambda$'s, that is, the set $(\lambda^i)$ (see 
Theorem \ref{KSmainthm}) is discrete. 
 
\begin{thm}
\label{generic}
There exists a set $S^* \subset (0, \infty)$ of Lebesgue measure zero, such that for any radius $R \in (0, \infty) \setminus S^*$
the following holds. If $(U^*_{\lambda^*})'(R) = 0$, then there exists $\delta > 0$ such that for any $\lambda \in 
(\lambda^* - \delta, \lambda^* + \delta) \setminus \{\lambda^*\}$ one has $(U^*_{\lambda})'(R) \neq 0$.
\end{thm}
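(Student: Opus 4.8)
The plan is to view the singular solution $U^*_\lambda$ as a function of the parameter $\lambda$ and to show that, generically in $R$, the map $\lambda \mapsto (U^*_\lambda)'(R)$ is transversal to zero. First I would set up a suitable one-parameter family. By Theorem \ref{KSsingintro} the equation in \eqref{KSwhole} together with the asymptotic expansion \eqref{gasy} determines $U^*_\lambda$ uniquely; the expansion with the $O(r^{2\delta})$ remainder suggests that, after subtracting the explicit logarithmic profile $-2\ln r + \ln\frac{2(N-2)}{\lambda}$, the correction solves a regular (weighted) ODE whose solution depends smoothly on $\lambda$. Concretely, write $U^*_\lambda(r) = -2\ln r + \ln\frac{2(N-2)}{\lambda} + w_\lambda(r)$; then $w_\lambda$ satisfies a second order ODE with a removable singularity at $0$, and one shows by a contraction-mapping / fixed point argument (as presumably done in the proof of Theorem \ref{KSsingintro}) that $w_\lambda$ is $C^1$, and in fact real-analytic or at least $C^1$, jointly in $(r,\lambda)$ on $(0,\infty)\times(0,\infty)$. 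This gives a $C^1$ function $F:(0,\infty)\times(0,\infty)\to\R$, $F(R,\lambda):=(U^*_\lambda)'(R)$.

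Next I would apply a parametrized Sard/transversality argument. Consider the zero set $Z=\{(R,\lambda): F(R,\lambda)=0\}$. The key claim is that $0$ is a regular value of $F$, i.e.\ at every point of $Z$ the gradient $(\partial_R F,\partial_\lambda F)$ is nonzero. The $\partial_R F$ component is $(U^*_\lambda)''(R)$, which at a point where $(U^*_\lambda)'(R)=0$ equals $U^*_\lambda(R)-\lambda e^{U^*_\lambda(R)}$ by the ODE in \eqref{KSwhole}; this vanishes only if $U^*_\lambda(R)\in\{\underline u_\lambda,\bar u_\lambda\}$, and at such a point $U^*_\lambda$ would have a critical point at a value of the equilibrium, forcing $U^*_\lambda$ to be constant by uniqueness for the ODE — contradicting the singularity at the origin. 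Hence $\partial_R F\neq 0$ on $Z$, so $0$ is automatically a regular value and $Z$ is a $C^1$ one-dimensional submanifold of $(0,\infty)^2$. Then the projection $\pi:Z\to(0,\infty)$, $(R,\lambda)\mapsto R$, is a $C^1$ map between $1$-manifolds, and Sard's theorem says its set of critical values $S^*$ has Lebesgue measure zero. For $R\notin S^*$, $\pi$ is a local diffeomorphism near each point of $\pi^{-1}(R)$, which is exactly the statement that for each $\lambda^*$ with $(U^*_{\lambda^*})'(R)=0$ there is $\delta>0$ with $(U^*_\lambda)'(R)\neq 0$ for $0<|\lambda-\lambda^*|<\delta$. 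One also needs that $\pi^{-1}(R)$ is discrete, but this is immediate: if $\lambda_n\to\lambda^*$ with $F(R,\lambda_n)=0$ then $\partial_\lambda F(R,\lambda^*)=0$ as well, contradicting that $\pi$ is a submersion onto its image near $(R,\lambda^*)$ (equivalently, contradicting that $0$ is a regular value).

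The main obstacle is establishing the $C^1$ (or better) dependence of $U^*_\lambda$, and of $(U^*_\lambda)'(R)$, on $\lambda$ — this is where the delicate singular ODE analysis enters, since $U^*_\lambda$ blows up at the origin and the expansion \eqref{KSwhole} only controls the remainder to order $O(r^{2\delta})$, so one must differentiate the fixed-point construction with respect to $\lambda$ in an appropriate weighted Hölder or $C^1$ space near $r=0$ and then propagate regularity to all of $(0,\infty)$ by standard ODE continuous dependence. A secondary point requiring care is that Sard's theorem for a $C^1$ map between $1$-manifolds does give a measure-zero critical value set (the classical Sard theorem in dimension $1$ needs only $C^1$), so $C^1$-regularity in $\lambda$ suffices; if one only obtains weaker regularity one may instead invoke that the critical set of a monotone-type $C^1$ function is controlled, or strengthen the a priori estimates. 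Once the smooth dependence is in hand, the transversality computation via the ODE is short, and the conclusion follows by the standard regular-value/Sard packaging described above.
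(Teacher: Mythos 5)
Your strategy is sound and the transversality computation is exactly the right observation: at any zero of $F(R,\lambda):=(U^*_\lambda)'(R)$ the ODE gives $\partial_R F=(U^*_\lambda)''(R)=U^*_\lambda(R)-\lambda e^{U^*_\lambda(R)}$, and this cannot vanish because $U^*_\lambda(R)\in\{\underline u_\lambda,\bar u_\lambda\}$ together with $(U^*_\lambda)'(R)=0$ would force $U^*_\lambda$ to be the constant equilibrium by uniqueness of the regular initial value problem at $r=R>0$, contradicting the singularity. The paper uses this same non-degeneracy of critical points (in Lemma \ref{lmlip} and Proposition \ref{KScontmp}). Where you genuinely diverge is in the regularity you require: your regular-value/Sard packaging on the zero set $Z=F^{-1}(0)$ needs $F$ to be $C^1$ jointly, i.e.\ differentiability of the singular solution with respect to $\lambda$, and this is precisely the ingredient you only sketch. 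The paper never proves such differentiability. Instead, Lemma \ref{lrgl} establishes only a Lipschitz bound $\|U^*_{\lambda}-U^*_{\lambda^*}\|_{C^2(I)}\le C|\lambda-\lambda^*|$ (by writing the difference quotient as a solution of a singular linear ODE and controlling it à la \cite{koch2001initial}, using the refined asymptotics of Remarks \ref{pbou}--\ref{deub}); from this the map $\lambda\mapsto R^i_\lambda$ is Lipschitz (Lemma \ref{lmlip}), and the theorem follows from Rademacher's theorem plus Sard's theorem for Lipschitz functions \cite{varberg}, applied to each $\lambda\mapsto R^i_\lambda$ and then taking a countable union over $i$ (including the images of the non-differentiability sets, which are null because Lipschitz maps preserve null sets). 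So the two routes differ in what they buy: yours gives a cleaner geometric picture (a single $C^1$ curve $Z$, no indexing by $i$, and discreteness of $\pi^{-1}(R)$ for free), at the cost of having to differentiate the fixed-point construction of Proposition \ref{uniq} in $\lambda$ in a weighted space near the singularity and propagate by ODE theory; the paper's route is lower-tech on the regularity side but needs the Lipschitz version of Sard.

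To be usable, your proposal must actually close the $C^1$-in-$\lambda$ step, and you should be aware it is the crux rather than a routine remark: the natural argument is a $C^1$ uniform contraction principle for the map $F(\eta)(\zeta)=\int_0^\infty G_N(\sigma)g_\lambda(\eta,\sigma+\zeta)\,d\sigma$ of Proposition \ref{uniq}, whose $\lambda$-dependence enters only through $m^2=2(N-2)/\lambda$ and through the change of variables $r=me^{-\zeta}$, followed by standard smooth dependence for the regular ODE on compact subsets of $(0,\infty)$. This appears feasible, but if you prefer not to carry it out, the economical fix is the paper's: prove only the Lipschitz estimate of Lemma \ref{lrgl}, pass to the Lipschitz functions $\lambda\mapsto R^i_\lambda$, and invoke Rademacher and Lipschitz--Sard exactly as in Section \ref{sec:gen}.
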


A direct consequence is the following corollary.

\begin{cor}
Let $R\in (0,\infty ) \backslash S^\ast$, where $S^\ast$ is defined in Theorem \ref{generic}. Then, there exists 
$\delta>0$ such that, for any $\lambda \in (\lambda^i - \delta ,\lambda^i +\delta)\backslash \{\lambda^i \}$, 
there is no singular solution of \eqref{eq:intro_v_lambda} satisfying \eqref{gasy}.
\end{cor}

To formulate a generic uniqueness result for regular solutions, recall $r^i_{\lambda ,\gamma}$ defined in Theorem \ref{KSoscbranch}. 
Then, for any $R\in (0, \infty) \setminus S^\ast$ any any large $\gamma$ there exists at most one $\lambda \approx \lambda^i$ such that 
$r^i_{\lambda ,\gamma} = R$.

\begin{thm}
\label{generic2}
Fix $\lambda^i$ as in Theorem \ref{KSmainthm} and let $S^*$ be the zero measure set as in  
Theorem \ref{generic}. Then for any $R \in (0, \infty) \setminus S^*$, 
 there exist $\delta > 0$ and $\Gamma > 0$ such that for each 
$\gamma \geq \Gamma$ there exists at most one $\lambda \in (\lambda^i - \delta, \lambda^i + \delta)$
such that $ r^{i}_{\lambda,\gamma} = R$. 
\end{thm}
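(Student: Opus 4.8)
The plan is to derive Theorem~\ref{generic2} from the transversality of the singular branch underlying Theorem~\ref{generic}, and then to transfer it to the regular solutions $u(\cdot,\gamma;\lambda)$ of \eqref{KSeqintfin} by letting $\gamma\to\infty$ and using the convergence $u(\cdot,\gamma;\lambda)\to U^\ast_\lambda$ from Theorem~\ref{KSsingintro}. Fix $R\in(0,\infty)\setminus S^\ast$ and let $\lambda^i$ be as in Theorem~\ref{KSmainthm}, so that $(U^\ast_{\lambda^i})'(R)=0$; put $h_\gamma(\lambda):=\partial_r u(R,\gamma;\lambda)$ and $h_\infty(\lambda):=(U^\ast_\lambda)'(R)$. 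The first step is to note that $\lambda^i$ is a \emph{simple} zero of the $C^1$ map $h_\infty$: for $R\notin S^\ast$ this is the transversality of the curve of critical points of the family $\{U^\ast_\lambda\}_\lambda$ to the line $\{r=R\}$, which is precisely what the Sard-type argument behind Theorem~\ref{generic} delivers (the statement of Theorem~\ref{generic} being the weaker consequence that $\lambda^i$ is isolated among the zeros of $h_\infty$). Hence there are $\delta_0,c_0>0$ with $|h_\infty'(\lambda)|\ge c_0$ for $|\lambda-\lambda^i|\le\delta_0$, so $h_\infty$ is strictly monotone, in particular injective, on $(\lambda^i-\delta_0,\lambda^i+\delta_0)$.

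The core of the proof is to establish that, as $\gamma\to\infty$ and locally uniformly for $\lambda$ near $\lambda^i$,
\begin{equation}
u(\cdot,\gamma;\lambda)\longrightarrow U^\ast_\lambda,\qquad \partial_\lambda u(\cdot,\gamma;\lambda)\longrightarrow W_\lambda:=\partial_\lambda U^\ast_\lambda\qquad\text{in }C^1_{loc}((0,\infty)).
\end{equation}
The first convergence is Theorem~\ref{KSsingintro} applied along sequences (local uniformity in $\lambda$ then follows by a routine contradiction argument), bootstrapped from $C^0_{loc}$ to $C^1_{loc}$ using the equation. For the second, $w:=\partial_\lambda u(\cdot,\gamma;\lambda)$ solves the linearized problem
\begin{equation}
-w''-\tfrac{N-1}{r}\,w'+w-\lambda e^{u}w=e^{u},\qquad w(0)=w'(0)=0,
\end{equation}
whereas $W_\lambda$ is the unique solution of the same equation with $U^\ast_\lambda$ in place of $u$ that remains bounded at the origin: from $U^\ast_\lambda(r)=-2\ln r+\ln\frac{2(N-2)}{\lambda}+O(r^{2\delta})$ one reads off $W_\lambda(r)\to-1/\lambda$ as $r\to0$, while the two homogeneous solutions of the linearized operator behave like $r^{\alpha_\pm}$ with $\mathrm{Re}\,\alpha_\pm=-\tfrac{N-2}{2}<0$ and hence blow up. I would prove $w\to W_\lambda$ by observing that the vanishing Cauchy data of $w$ at $0$, together with the shrinking (to scale $\approx e^{-\gamma/2}$) of the layer where $u(\cdot,\gamma;\lambda)\gg1$, forces $w$ onto the bounded mode: one obtains uniform-in-$\gamma$ bounds on $w$ across that layer by barriers adapted to the near-Euler structure of the linearized operator there (starting from $w''(0)=-e^{\gamma}/N$ and the explicit transition ansatz for $u$), and then passes to the limit in the ODE on $[\varepsilon,R]$ by continuous dependence on data and coefficients. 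This is the step I expect to be the main obstacle; a version of this convergence is presumably already needed in the proof of Theorem~\ref{generic}.

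To conclude, fix $\delta\in(0,\delta_0]$. By the previous step $h_\gamma'\to h_\infty'$ uniformly on $[\lambda^i-\delta,\lambda^i+\delta]$ (since $C^1_{loc}$ convergence of $\partial_\lambda u$ to $W_\lambda$ controls $\partial_r$ at $R$), so there is $\Gamma>0$ such that for all $\gamma\ge\Gamma$ and $|\lambda-\lambda^i|\le\delta$ one has $|h_\gamma'(\lambda)-h_\infty'(\lambda)|\le c_0/2$, hence $|h_\gamma'(\lambda)|\ge c_0/2$ with the fixed sign of $h_\infty'(\lambda^i)$. Therefore $\lambda\mapsto h_\gamma(\lambda)=\partial_r u(R,\gamma;\lambda)$ is strictly monotone on $(\lambda^i-\delta,\lambda^i+\delta)$, and so vanishes at most once there. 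Since $r^i_{\lambda,\gamma}=R$ forces $\partial_r u(R,\gamma;\lambda)=h_\gamma(\lambda)=0$ by the very definition of $r^i_{\lambda,\gamma}$, for every $\gamma\ge\Gamma$ there is at most one $\lambda\in(\lambda^i-\delta,\lambda^i+\delta)$ with $r^i_{\lambda,\gamma}=R$. This is the assertion, with $S^\ast$ the same set as in Theorem~\ref{generic}.
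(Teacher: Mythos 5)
Your overall strategy (make $\lambda\mapsto\partial_r u(R,\gamma;\lambda)$ strictly monotone near $\lambda^i$ for large $\gamma$ by passing derivatives in $\lambda$ to the singular limit) is plausible, but it hinges on a step that you only sketch and that is in fact the hard analytic content: the existence of $W_\lambda=\partial_\lambda U^*_\lambda$, its continuity in $\lambda$, and the locally uniform convergence $\partial_\lambda u(\cdot,\gamma;\lambda)\to W_\lambda$ in $C^1_{loc}$. Nothing in the paper gives differentiability of the singular branch in $\lambda$: Lemma \ref{lrgl} only gives Lipschitz dependence $\|U^*_{\lambda}-U^*_{\lambda^*}\|_{C^2(I)}\le C|\lambda-\lambda^*|$, and the set $S^*$ of Theorem \ref{generic} is built from Rademacher/Sard applied to the Lipschitz scalar function $\lambda\mapsto R^i_\lambda$; it guarantees only that this function is differentiable at $\lambda^i$ with nonzero derivative. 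Your first step therefore overstates what Theorem \ref{generic} ``delivers'': to convert $\partial_\lambda R^i_{\lambda^i}\neq 0$ into ``$\lambda^i$ is a simple zero of the $C^1$ map $h_\infty(\lambda)=(U^*_\lambda)'(R)$ with $|h_\infty'|\ge c_0$ on a neighborhood'' you already need $h_\infty\in C^1$, i.e.\ exactly the unproved differentiability and continuity of the linearized singular branch (your remark that ``a version of this convergence is presumably already needed in the proof of Theorem \ref{generic}'' is not correct — that proof uses only the Lipschitz bound). The convergence $\partial_\lambda u\to W_\lambda$ across the shrinking inner layer, which you describe via barriers and flag as ``the main obstacle,'' is precisely what would have to be proved, and it is delicate because the homogeneous solutions of the linearized operator at the singular solution blow up like $r^{-\frac{N-2}{2}}$ at the origin; selecting the bounded mode uniformly in $\gamma$ and in $\lambda$ is not routine.

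For comparison, the paper's proof deliberately avoids differentiating the singular solution in $\lambda$. It shows (Lemma \ref{lguc}) that $\lambda\mapsto u(\cdot,\gamma,\lambda)$ is $C^2$ with first and second $\lambda$-derivatives bounded uniformly in $\gamma$, deduces (Lemma \ref{rglb}) that $\lambda\mapsto r^i_{\lambda,\gamma}$ is bounded in $C^2(I)$ uniformly in $\gamma$, and then argues by contradiction: two parameters $\lambda_n,\lambda_n'\to\lambda^i$ with $r^i_{\lambda_n,\gamma_n}=r^i_{\lambda_n',\gamma_n}=R$ give, via the mean value theorem and the uniform $C^{1,1}$ bound, $\partial_\lambda r^i_{\lambda^i,\gamma_n}\to 0$; Arzel\`a--Ascoli then forces $\partial_\lambda R^i_{\lambda^i}=0$, contradicting the definition of $S^*$. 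All the quantitative information is thus extracted from the regular solutions, where smooth dependence on $\lambda$ is free, and only the a.e.\ derivative of the Lipschitz limit $\lambda\mapsto R^i_\lambda$ is ever used. To complete your argument you would either have to supply the missing linearization analysis at the singular solution, or restructure it along these lines.
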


As a direct corollary of the two previous theorems and Theorem \ref{KSoscbranch}, we obtain a quite complete picture of the bifurcation diagram in small dimension for generic radius.

\begin{cor}
Let $3\leq N\leq 9$. For $R\in S^\ast$, the branches $\mathcal{B}_i^+$ defined in \ref{thm:bifurcation} oscillate in the plane 
$(\mu , u(0))$ around the line $\mu = \mu^i$, where $\lambda^i e^{\mu^i}=\mu^i$. 
Moreover, no secondary bifurcation occurs for large $u(0)$ and there are no branches bifurcating from infinity. Furthermore, 
$\mathcal{B}_i^+$ can be parametrized by $u(0)$ for large values of $u(0)$. 
\end{cor}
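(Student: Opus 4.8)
The plan is to read the statement off from the three generic/oscillation results just established --- Theorem~\ref{generic} (local uniqueness of singular solutions), Theorem~\ref{generic2} (local uniqueness of regular solutions near a singular one) and Theorem~\ref{KSoscbranch} (infinitely many regular Neumann solutions at $\lambda=\lambda^i$ with $u(0)\to\infty$) --- combined with the compactness in Theorem~\ref{KSsingintro}, the bifurcation structure in Theorem~\ref{thm:bifurcation} and the Morse index count in Proposition~\ref{KSmorse}. Throughout I work in the normalized coordinates $(\mu,u(0))$, recalling that the $\mu$-coordinate $\mu_0$ corresponds to the parameter $\lambda_0=\mu_0 e^{-\mu_0}$ of \eqref{eq:intro_v_lambda}, so that the line $\{\mu=\mu^i\}$ is exactly the slice $\{\lambda=\lambda^i\}$. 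Fix $3\le N\le 9$, $i\ge i^\ast$ and $R\in(0,\infty)\setminus S^\ast$ (the intended hypothesis being $R\in(0,\infty)\setminus S^\ast$, in which range Theorems~\ref{generic} and~\ref{generic2} apply).

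First I would show that, as $u(0)\to\infty$, $\mathcal{B}_i^+$ accumulates only on the line $\mu=\mu^i$. By Theorem~\ref{thm:bifurcation}(i) the continuum $\mathcal{B}_i^+$ is unbounded, and by the Sturm-Picone argument recalled just before Proposition~\ref{KSmorse} it is bounded in $\mu$, hence unbounded in the $u(0)$-direction. Take any $(\mu_n,A_n)\in\mathcal{B}_i^+$ with $A_n\to\infty$; the associated solutions $v_n$ of \eqref{KSeqintfin}, whose restrictions to $B_R$ are Neumann solutions, satisfy $v_n(0)=\mu_n A_n$; passing to a subsequence, $\mu_n\to\mu_\infty$, and one checks (as in the proof of Theorem~\ref{KSoscbranch}) that $\mu_n$ remains in a compact subinterval of $(0,\infty)$, so that $\lambda_n=\mu_n e^{-\mu_n}\to\lambda_\infty\in(0,\infty)$ and $v_n(0)\to\infty$. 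Theorem~\ref{KSsingintro} then gives $v_n\to U^\ast_{\lambda_\infty}$ in $C^0_{loc}((0,\infty))$; a bootstrap upgrades this to $C^1$ away from the origin, and since $v_n'(R)=0$ we get $(U^\ast_{\lambda_\infty})'(R)=0$. By Theorem~\ref{generic} the set $\Lambda_R:=\{\lambda>0:(U^\ast_\lambda)'(R)=0\}$ is discrete; since the number of zeros of $u_\mu-1$ is constant along $\mathcal{B}_i^+$ (Theorem~\ref{thm:bifurcation}(iv)) and passes to the limit, $U^\ast_{\lambda_\infty}$ carries the same nodal data, which together with discreteness identifies $\lambda_\infty$ as the value $\lambda^i$ of Theorem~\ref{KSmainthm}. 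As this holds along every such sequence, the cluster value of the $\mu$-coordinate of $\mathcal{B}_i^+$ at $u(0)=\infty$ is the single point $\mu^i=\bar{u}_{\lambda^i}$.

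Next, Theorem~\ref{generic2} provides $\delta,\Gamma>0$ such that for every $\gamma\ge\Gamma$ at most one $\lambda\in(\lambda^i-\delta,\lambda^i+\delta)$ makes the regular solution with $v(0)=\gamma$ a Neumann solution on $B_R$ carrying the nodal pattern of $\mathcal{B}_i^+$; by the previous step $\mathcal{B}_i^+$ itself supplies such a $\lambda$ for every large $\gamma$, hence it is the only one. Therefore, for large $u(0)$, $\mathcal{B}_i^+$ is the graph of a single function $\mu=\psi(u(0))$ --- the asserted parametrization --- and no other solution lies in this region: no secondary bifurcation can occur for large $u(0)$, and any branch bifurcating from infinity would, by the same compactness and the discreteness of $\Lambda_R$, have to approach some line $\{\mu=\mu^k\}$ and then coincide with $\mathcal{B}_k^+$ by the same uniqueness, so in fact none does. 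Finally, Theorem~\ref{KSoscbranch} yields $\gamma_n\to\infty$ carrying regular Neumann solutions on $B_R$ at $\lambda=\lambda^i$ --- that is, points of $\mathcal{B}_i^+$ with $\mu$-coordinate exactly $\mu^i$ and $u(0)\to\infty$; since $\psi\to\mu^i$ by the previous step, $\mathcal{B}_i^+$ meets $\{\mu=\mu^i\}$ infinitely often while staying close to it, and since $m(U^\ast_{\lambda^i})=\infty$ (Proposition~\ref{KSmorse}, valid because $3\le N\le 9$) the branch has infinitely many turning points there --- each such point being an eigenvalue of the linearization crossing zero, the infinite Morse index of the limit forcing infinitely many of them. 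Hence $\mathcal{B}_i^+$ oscillates around $\mu=\mu^i$, and every assertion of the corollary is accounted for.

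The step that is not a cited black box, and that I expect to be the main obstacle, is the nodal bookkeeping tying the bifurcation index $i$ to the singular solution $U^\ast_{\lambda^i}$, to $r^i_{\lambda,\gamma}$ and to the line $\mu=\mu^i$: one must check that the regular solutions furnished by Theorem~\ref{KSoscbranch} and those arising in the limit procedure above have exactly the $i-1$ interior zeros of $u-1$ that characterize $\mathcal{B}_i^+$ in Theorem~\ref{thm:bifurcation}(iv), so that they genuinely lie on $\mathcal{B}_i^+$ and not on a neighbouring branch; this rests on the fact that a regular solution with $u(0)$ large shadows $U^\ast_{\lambda^i}$ on compact subsets of $(0,\infty)$, and, implicitly, on the uniqueness of the element $\lambda^i\in\Lambda_R$ with the prescribed nodal count, which generic discreteness (Theorem~\ref{generic}) secures. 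The remaining points --- connectedness of the cluster set, which turns ``every sequential limit equals $\mu^i$'' into ``$\mu\to\mu^i$ along $\mathcal{B}_i^+$'', and the passage from ``infinitely many intersections plus infinite Morse index'' to genuine oscillation --- are routine once the parametrization $\mu=\psi(u(0))$ near infinity is in hand.
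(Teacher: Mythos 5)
Your assembly is correct and matches the paper's intended argument: the paper offers no separate proof, presenting the statement as a direct consequence of Theorems \ref{generic}, \ref{generic2} and \ref{KSoscbranch} (together with Theorems \ref{KSsingintro}, \ref{KSmainthm} and \ref{thm:bifurcation}), which is exactly the combination you carry out, and the nodal/index bookkeeping and the typo $R\in S^\ast$ versus $R\in(0,\infty)\setminus S^\ast$ that you flag are glossed over in the paper as well. Your additional appeal to Proposition \ref{KSmorse} for the turning points is consistent with the paper's surrounding discussion but is not part of the ingredients the paper cites for this corollary.
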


Let us briefly describe the main ideas of proofs. 
We often use the change of variables
\begin{equation*}
u(r)=\eta(\zeta)+2\zeta,
\end{equation*}
where
$$
 r=\sqrt{\dfrac{2(N-2)}{\lambda }}e^{-\zeta}
$$
which transforms a radial solution $u$ of \eqref{eq:intro_v_lambda} to $\eta$ satisfying
\begin{equation}\label{inez}
\left\{
\begin{aligned}
\eta^{\prime \prime}  - (N-2) \eta^\prime +2(N-2) \eta &= m^2 e^{-2\eta}(\eta + 2 \zeta) - 2(N-2) (e^\eta -1-\eta)
\qquad \eta\in \R,\\
 \displaystyle\lim_{\zeta\rightarrow \infty} \eta(\zeta)&=0.
\end{aligned}
\right.
\end{equation}
Note that the zero order term $u$ makes \eqref{inez} non-autonomous and as such we cannot 
directly use techniques from dynamical systems. 
However, to gain a better intuition assume that the term $m^2 e^{-2\eta}(\eta + 2 \zeta)$, which is exponentially 
small at infinity, is missing. In that case, we are searching for solutions converging along stable manifold to $0$.
A standard linear analysis yields that $0$ is an unstable focus if $3 \leq N \leq 9$ and unstable node if $N \geq 10$ and as 
such there is no stable manifold. Thus, if $e^{-2\zeta}(\eta + 2\zeta)$ is missing, then $\eta \equiv 0$ is the only 
solution of \eqref{inez}. This reasoning suggests that solutions of \eqref{inez} are unique and exponentially close 
to 0 at least for large $\zeta$. 
The uniqueness yields that solutions of \eqref{inez} are very unstable and are presumably hard to analyze by direct numerical
and analytical methods. 
Thus, to prove the existence and uniqueness of solutions to \eqref{inez} we incorporate the condition at infinity 
into the choice of functional spaces and use the Banach fixed 
point theorem. 

To analyze the oscillations, we need to understand the behavior of $U^\ast_\lambda$ for large $r$. Since 
\eqref{eq:intro_v_lambda} admits a Lyapunov functional, intuition (modulo non-autonomous term $\frac{1}{r}u'$ which is 
small for large $r$)
yields that the function $U^\ast_\lambda$ converges as $r \to \infty$ to an equilibrium of the \eqref{eq:intro_v_lambda}
viewed as an initial value problem, that is, 
to the values $\bar{u}_\lambda$ or $\underline{u}_\lambda$ (see \eqref{ceqs}). Again by ignoring 
 the term 
$u'/r$, we can analyze the character of  equilibria and obtain that $\underline{u}_\lambda$ is a saddle and  
$\bar{u}_\lambda$ is a center. Hence, the former does not allow for oscillatory solutions, whereas the latter does. Therefore, 
an important ingredient of the proof is to show that the singular solution of \eqref{KSwhole} 
does not converge to $\underline{u}_\lambda$ as $r\rightarrow \infty$, see Proposition \ref{liouville} which is in fact a Pohozaev type
identity. This is the only result where we need our technical upper bound $\lambda \leq \lambda^*$ in lower dimensions 
(cf. Theorem \ref{KSsingintro}). 
The final argument is based on Sturm-Piccone oscillation theorem and careful estimates of singular solutions. Note that 
similar ideas were used in \cite{nitechnote}.

The proof of $u(\cdot,\gamma)\rightarrow U^\ast$ as $\gamma \rightarrow \infty$ is partly motivated by \cite{miya2} and 
crucially depends on the uniqueness of the singular solution $U^\ast$. Then, it suffices to prove that $u(\cdot,\gamma)$
converges to a function that satisfies both the equation \eqref{KSwhole} and asymptotics at the origin \eqref{gasy}. Since, 
$u(\cdot,\gamma)$ and $U^\ast$ satisfy the same equation \eqref{KSwhole}, the convergence of $u(\cdot,\gamma)$ to a solution of 
\eqref{KSwhole} follows from a priori estimates and standard regularity theory. The asymptotics at the origin is of a different flavor 
and requires careful estimates in transformed variables. 

The proof of Theorem \ref{KSoscbranch} uses an observation that for fixed $\lambda$ 
and large $u$, the term $u$ is negligible compared to $e^u$, and therefore if $\gamma$ is large, then 
close to the origin we can neglect the zero order term which was responsible for the breaking of scaling. 
Hence, close to the origin $u$ can be approximated by the solution of scale invariant problem
\begin{equation}\label{ruva}
\left\{ 
\begin{gathered} 
\Delta \bar{u} + \lambda e^{\bar{u}}=0\qquad \text{on} \ (0,\infty) \,, \\
\bar{u}(0,\alpha)=\alpha,\qquad \bar{u}_\rho (0,\alpha)=0\,.
\end{gathered}
\right.
\end{equation}
The same reasoning yields that singular solutions of \eqref{eq:intro_v_lambda} can be approximated near the origin
by the singular solution of \eqref{ruva} which is given by
\begin{equation*}
\bar{u}^\ast (r)= -2\ln r +\ln \dfrac{2(N-2)}{\lambda}.
\end{equation*}
Using 
the classical arguments of Joseph and Lundgren \cite{josephlundgren} and scale invariance of \eqref{ruva} 
we conclude that if $3 \leq N \leq 9$ and $\alpha$ being sufficiently large,  the solution 
$u(\cdot, \alpha)$ of \eqref{ruva} intersects arbitrarily many times $\bar{u}^*$ in a small neighborhood of the origin.  
Using precise estimates we can indeed verify this intuition and 
conclude that the solution $u$ of \eqref{eq:intro_v_lambda} with $u(0) = \gamma$
intersects arbitrarily many times the singular solution $U^*$ in a small neighborhood of the origin. 
The rest of the proof of Theorem \ref{KSoscbranch} follows from
zero number arguments.

Theorem \ref{KSmainthm} is a consequence of the continuity of the function $\lambda\rightarrow M_\lambda^i$ for all $i\in \N$ and the fact that, for any $i\in \N$,
\begin{equation}
\label{LNTmaine1intro}
M_\lambda^i \to 0^+,\ \textrm{ as }\ \lambda\to 0^+.
\end{equation}
Although this idea is rather elementary its proof poses the main technical
challenge of the paper. In order to prove \eqref{LNTmaine1intro} we not only need more precise asymptotics of 
$U^*_\lambda$ at the origin, but we require estimates on the \textit{length of the interval} where the asymptotics are valid. 
In fact, we prove estimates up till $r_\lambda$, the first intersection point of $U^*_\lambda$ with $\bar{u}_\lambda$. The cornerstone
of the proofs is an observation that the higher order correction of $U^*_\lambda$ for small $r$ is negative. Once the 
first intersection with $\bar{u}_\lambda$ is established, we obtain an estimate on $(U^*_\lambda)'(r_\lambda)$ and finish
the proof using careful estimates and Sturm-Piccone theorem. We remark that direct estimates up till the first critical 
point of $U^*_\lambda$, that is, on $R_\lambda^1$,  seem beyond reach. The continuity of  
$\lambda\rightarrow M_\lambda^i$ is primarily based on the uniqueness of $U^*_\lambda$.

The bounds on the Morse index stated in Proposition \ref{KSmorse} are based on the asymptotic behavior of 
$U^\ast_\lambda$ when $r\rightarrow 0$ combined with Hardy's inequality. 
The proof of Theorem \ref{generic} follows from the fact that the function $\lambda \rightarrow R_\lambda^i$ is Lipschitz which allows us to use the Sard's theorem. Lipschitz continuity in turn follows from precise estimates on the modulus of continuity of the function 
$\lambda \mapsto U^*_\lambda$. 
The main observation in the proof of Theorem \ref{generic2} is the fact that the function $\lambda \rightarrow r^i_{\lambda ,\gamma}$ is bounded in $C^2 (I)$, for some compact interval $I \subset (0,\infty )$, by a constant not depending on $\gamma$, which in
turn follows from precise estimates on the rate of convergence of regular solutions to singular ones. 

The paper is organized as follows. In Section \ref{secexis}, we prove the first part of Theorem \ref{KSsingintro}, 
namely, we establish the existence of $U_\lambda^\ast$ and prove oscillations 
around $\overline{u}_\lambda$. We finish the proof of Theorem \ref{KSsingintro} in Section \ref{secconv} by showing the convergence
of $u_\lambda (r,\gamma)$ to $U_\lambda^\ast (r)$ as 
 $\gamma \rightarrow \infty$. Section \ref{sec4} is dedicated to the proofs of Theorem \ref{KSoscbranch} 
 and Proposition \ref{KSmorse} and Theorem \ref{KSmainthm} is proved in Section \ref{sec5}. Finally, generic results, 
 Theorems \ref{generic} and \ref{generic2} are proved in Section \ref{sec:gen}.
Let us mention that we expect the same results to hold for the Lin-Ni-Takagi equation or all radii, which 
will be the subject of a forthcoming work.

\section{Construction of the positive radial singular solution in the whole space.}
\label{secexis}
Fix $N \geq 3$, $\lambda > 0$ and consider the equation
 \begin{equation}
\label{sectKS}
\left\{
\begin{aligned}
- u'' - \frac{N - 1}{r}u' + u &=\lambda e^u,&\qquad  &\textrm{in} \ (0, \infty),\\
u &> 0,& &\textrm{in} \ (0, \infty) \,,
\end{aligned}
\right.
\end{equation}
where $u$ depends on the radial variable $r$ and the derivatives are with respect to $r$. 
The main goal of this section is the proof of the existence and uniqueness of solution of \eqref{sectKS} 
with 
\begin{equation}\label{ubbc}
u(r) = -2 \ln r + \ln \frac{2(N-2)}{\lambda} + o(1) \qquad \textrm{as } r\to 0^+ \, ,
\end{equation} 
where we denote by $o(1)$ the class of functions $f$ such that $\lim_{r\rightarrow 0^+} f(r)=0$. We use the following change of variables
\begin{equation}
\label{defw}
u(r)=\eta(\zeta) + 2\zeta,
\end{equation}
where
\begin{equation}\label{nrv}
 r=\sqrt{\dfrac{2(N-2)}{\lambda }}e^{-\zeta}.
\end{equation}
To simplify notation, we denote $m=\sqrt{\dfrac{2(N-2)}{\lambda }} $ and $\zeta = \ln \dfrac{m}{r}$. A direct computation shows that
\begin{equation}\label{fdrv}
\dfrac{ du}{dr} = - \dfrac{1}{r} \dfrac{ d \eta}{d\zeta} -\dfrac{2}{r} ,
\end{equation}
and
$$
\dfrac{ d^2 u}{dr^2} = \dfrac{1}{r^2} \dfrac{ d^2 \eta}{d\zeta^2} + \dfrac{1}{r^2}\dfrac{d \eta}{d\zeta} +\dfrac{2}{r^2}.
$$
In the following, if $f:\R\rightarrow \R$ depends only on one variable $\rho$, usually $r$ or $\zeta$, 
we denote $f ' = \frac{d f}{d\rho}$, and analogously for higher order derivatives. 
Then,  \eqref{sectKS} is equivalent to 
\begin{align}\label{eq:rdu}
0&= - u^{\prime \prime}-\dfrac{N-1}{r}u^\prime +u - \lambda e^u\\
&=   \dfrac{1}{r^2} \left[-\eta^{\prime \prime}  +(N-2) \eta^\prime  + m^2 e^{-2\zeta} (\eta + 2\zeta ) -2(N-2)(e^\eta-1) \right]\,,
\end{align}
and consequently
\begin{equation}\label{eq:fze}
\eta^{\prime \prime}(\zeta ) -(N-2)\eta^\prime (\zeta)+2(N-2) \eta (\zeta ) =g(\zeta),
\end{equation}
where
\begin{equation}\label{dfge}
g(\zeta)= m^2 e^{-2\zeta}(\eta (\zeta )+2\zeta ) - 2(N-2) (e^{\eta (\zeta )} -1-\eta (\zeta ))
\end{equation}
We also set
\begin{equation}\label{dfph}
\phi(\eta) = - 2(N-2) (e^{\eta} -1-\eta) \,.
\end{equation}
The blow up rate \eqref{ubbc} is equivalent to 
\begin{equation}
\lim_{\zeta \to \infty} \eta(\zeta) = 0 \,.
\end{equation}
For any $N \geq 3$ denote 
\begin{equation}\label{dfab}
\alpha = N - 2, \qquad \beta = \sqrt{\dfrac{(N-2)|N-10|}{4}}, \quad 
\end{equation} 
and let $G_N$ be the Green's function for the left hand side of \eqref{eq:fze} defined by
\begin{equation}\label{dfgf}
G_N(z) := 
\begin{cases}
\frac{1}{\beta} e^{-\frac{\alpha}{2}z} \sin(\beta z) & 3 \leq N \leq 9, \\
e^{-\frac{\alpha}{2}z} z & N = 10, \\
\frac{1}{\beta} e^{-\frac{\alpha}{2}z} \sinh(\beta z) & N > 10 
\end{cases}
\quad \textrm{for } z \geq 0, \qquad G_N (z) = 0 \qquad \textrm{for } z < 0 \,.
\end{equation}
Observe that $G_N \in L^1(\mathbb{R}) \cap L^\infty(\mathbb{R})$ for any $N \geq 3$.  
Then, \eqref{eq:fze} is equivalent to  
\begin{equation}\label{intformzeta}
\eta (\sigma)=  \int_{\R} G_N(\tau - \sigma) g(\tau)d\tau \,.
\end{equation}
Thus, finding solution of \eqref{sectKS} satisfying \eqref{ubbc} reduces to finding a solution of \eqref{intformzeta}.

\begin{prop}
\label{uniq}
Let $m>2\sqrt{2(N-2)}$. The equation \eqref{eq:fze} admits unique solution on $(-\infty, \infty)$ satisfying 
\begin{equation}
\label{KSuniqeq}
 \displaystyle\lim_{\zeta \rightarrow \infty} \eta(\zeta) = 0. 
\end{equation}
 This solution is also unique on any interval $(\zeta_0, \infty)$, $\zeta_0 \in \R$.
\end{prop}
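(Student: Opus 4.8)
The plan is to prove existence and uniqueness via the Banach fixed point theorem applied to the integral operator
\[
T[\eta](\sigma) = \int_{\R} G_N(\tau - \sigma)\, g_\eta(\tau)\, d\tau,
\]
where $g_\eta$ is given by \eqref{dfge} with $\eta$ substituted, on a suitably chosen complete metric space of functions decaying at $+\infty$. The key observation that makes this work is that the condition $\lim_{\zeta\to\infty}\eta(\zeta)=0$ must be encoded directly into the ambient space rather than imposed afterwards, because the linear operator on the left of \eqref{eq:fze} has $0$ as an unstable equilibrium (unstable focus for $3\le N\le 9$, unstable node for $N\ge 10$), so the ``naive'' ODE flow has no stable manifold. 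Concretely, I would fix a weight and work in a space such as
\[
X_{\zeta_0} = \Big\{ \eta \in C\big((\zeta_0,\infty)\big) : \|\eta\|_{X} := \sup_{\zeta > \zeta_0} e^{\kappa \zeta} |\eta(\zeta)| < \infty \Big\}
\]
for an appropriate $\kappa \in (0,2)$ (or an $L^\infty$ ball inside a polynomially/exponentially weighted space), equipped with the norm $\|\cdot\|_X$; the precise $\kappa$ should be dictated by the decay $e^{-2\zeta}$ appearing in $g$ and by the structure of $G_N$.

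First I would establish the mapping property: show $T$ sends a ball $B_\rho \subset X_{\zeta_0}$ into itself provided $\zeta_0$ is large enough (equivalently $r$ small enough). This uses that $G_N \in L^1(\R)\cap L^\infty(\R)$, that the convolution kernel $G_N(\tau-\sigma)$ is supported on $\tau \ge \sigma$ so only values of $\eta$ to the right of $\sigma$ enter, and that the source term splits as $g_\eta = m^2 e^{-2\tau}(\eta + 2\tau) + \phi(\eta)$, where $\phi(\eta) = -2(N-2)(e^\eta - 1 - \eta)$ is quadratically small in $\eta$ near $0$. The first piece contributes the genuinely inhomogeneous term $m^2 e^{-2\tau}\cdot 2\tau$, whose convolution with $G_N$ is explicitly computable and is $O(e^{-2\sigma}\sigma)$ (or $O(e^{-\kappa\sigma})$ in the chosen norm) — this is what forces the nonzero fixed point — while the remaining pieces are controlled by $\|\eta\|_X$ with a small constant once $\zeta_0$ is large because of the $e^{-2\tau}$ factor and the quadratic bound on $\phi$. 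Then I would prove the contraction estimate $\|T[\eta_1] - T[\eta_2]\|_X \le \theta \|\eta_1 - \eta_2\|_X$ with $\theta < 1$, again for $\zeta_0$ large; here the difference $g_{\eta_1} - g_{\eta_2}$ is estimated using the Lipschitz bound $|\phi(\eta_1) - \phi(\eta_2)| \le C(\|\eta_1\|_X + \|\eta_2\|_X)|\eta_1 - \eta_2|$ on the quadratic part together with the $m^2 e^{-2\tau}$ weight on the linear part, so the Lipschitz constant of $T$ is made small by taking $\zeta_0$ large (the hypothesis $m > 2\sqrt{2(N-2)}$, i.e.\ $\lambda < \tfrac12$, presumably enters to keep certain constants under control). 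Banach's theorem then yields a unique fixed point $\eta$ in $B_\rho \subset X_{\zeta_0}$; elliptic (ODE) regularity bootstraps $\eta$ to a classical solution of \eqref{eq:fze}, and translating back via \eqref{defw}--\eqref{nrv} gives a solution of \eqref{sectKS} with the stated asymptotics. Extending from $(\zeta_0,\infty)$ to all of $(-\infty,\infty)$ is then just continuation of the ODE, which is globally solvable since the nonlinearity, though it grows like $e^\eta$, does so in the variable where the equation is a perturbed linear ODE and a priori bounds on bounded $\zeta$-intervals follow from the integral representation.

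The uniqueness statement on $(\zeta_0,\infty)$ (and hence on all of $\R$ by the flow) needs a little more care than the fixed-point uniqueness, because a priori another solution with $\lim_{\zeta\to\infty}\eta = 0$ need not lie in the weighted ball $B_\rho$ — it only decays, perhaps slowly. The plan here is a bootstrap: given \emph{any} solution $\eta$ of \eqref{intformzeta} with $\eta(\zeta)\to 0$, first note $\eta$ is bounded on $(\zeta_0,\infty)$, then re-insert this into the integral representation and use the $e^{-2\tau}$ decay in $g$ plus the $L^1$ bound on $G_N$ to gain, say, $\eta(\sigma) = O(e^{-\delta\sigma})$ for some small $\delta$, and iterate the gain in the exponent until $\eta$ lands in the weighted space $X_{\zeta_0'}$ for some possibly larger $\zeta_0'$, where fixed-point uniqueness applies; then unique continuation of the second-order ODE propagates the agreement of any two such solutions back to $\zeta_0$. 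I expect the main obstacle to be precisely the bookkeeping of constants in the self-mapping and contraction estimates — pinning down the right weight $\kappa$ (or the right polynomial correction, since the $N=10$ Green's function carries an extra factor $z$ and for $3\le N\le 9$ the oscillation $\sin(\beta z)$ must not spoil the decay), and verifying that ``$\zeta_0$ large'' can be achieved uniformly enough to close both estimates simultaneously. The conceptual content, however, is entirely in the realization that the decay condition at $+\infty$ should be built into the function space, converting a problem with no stable manifold into a clean contraction.
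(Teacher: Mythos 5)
Your core strategy coincides with the paper's: rewrite \eqref{eq:fze} as the fixed point problem \eqref{intformzeta} for the operator $\eta\mapsto G_N * g(\eta,\cdot)$, run Banach's theorem on a space that has the decay at $+\infty$ built in, with smallness of the Lipschitz constant coming from the quadratic behaviour of $e^\eta-1-\eta$ near $0$ and from the $e^{-2\zeta}$ factor for $\zeta_0$ large, then extend uniqueness from the contraction ball to every solution with $\eta(\zeta)\to 0$ and propagate backwards by uniqueness of the ODE initial value problem. The only structural deviation is your choice of an exponentially weighted norm. The paper simply works in $X=\{\eta\in C^0([\zeta_0,\infty)):\ \eta\ \text{decays at infinity}\}$ with the sup norm and contracts on a small ball $\mathcal{B}_{\bar r}$; since any solution with $\eta\to 0$ automatically lies in $\mathcal{B}_{\bar r}$ after enlarging $\zeta_0$, no bootstrap is needed for uniqueness. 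Your weighted space also works, but it forces exactly the extra decay-upgrading argument you describe (which in the paper appears separately, as Lemma \ref{behaorigin}, to get the rate $O(e^{-(2-\delta)\zeta})$ — not as part of the uniqueness proof). Two small points: $m>2\sqrt{2(N-2)}$ means $\lambda<1/4$, not $\lambda<1/2$, and it plays no essential role in the contraction estimates.

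The genuine gap is the extension from $(\zeta_0,\infty)$ to all of $(-\infty,\infty)$, i.e.\ from $r\in(0,r_0)$ to $r\in(0,\infty)$. You assert that "a priori bounds on bounded $\zeta$-intervals follow from the integral representation", but \eqref{intformzeta} contains the unknown through $e^{\eta}$ with no smallness once $\zeta$ is not large, so it is an identity, not an a priori estimate; and an exponential nonlinearity can in general cause blow-up at finite time, so global continuation requires an actual argument. The paper supplies it in the original variable: along the maximal interval of existence, $V(r)=\tfrac{1}{2}\left((u')^2-u^2\right)+\lambda e^{u}$ defined in \eqref{dflf} satisfies $V'(r)=-\tfrac{N-1}{r}(u')^2\le 0$, so $V$ is bounded above away from $r=0$; since $\lambda e^u>0$ this gives $(u')^2\le C+u^2$, and Gronwall applied to $(u^2)'\le C+2u^2$ bounds $u$ and $u'$ up to the putative endpoint, contradicting maximality. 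The point your sketch misses is the sign structure: $\lambda e^u$ enters the energy favorably, so escape to $+\infty$ in finite $r$ is impossible, while in the direction $u\to-\infty$ the equation grows only linearly. This is a short step, but it must be made.
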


\begin{rmq}
Proposition \ref{uniq} establishes existence and uniqueness of solution $U^*$ asserted in Theorem \ref{KSsingintro}. 
\end{rmq}

\begin{proof}[Proof of Proposition \ref{uniq}]
First, we construct a local solution by using the contraction mapping theorem
on the Banach space 
$X=\{\eta \in C^0 ([\zeta_0;\infty)); |\eta|_{\infty}<\infty\}$,  where $\zeta_0$ is determined below and $C^0 ([\zeta_0, \infty))$ 
is the space of continuous function on $[\zeta_0, \infty)$ that decay at infinity, equipped with the supremum norm. 
Also, for any $\bar{r} \geq 0$ denote 
$\mathcal{B}_{\bar r} =\{\eta \in X; |\eta|_{\infty}< \bar{r}\} $ and let $g$ be as in \eqref{dfge}. 
To avoid confusion, we explicitly indicate the dependence of $g$ on $\eta$.

Let $G_N$ be defined by \eqref{dfgf}. For any $\eta \in \mathcal{B}_{\bar r}$ and any $\zeta \geq \zeta_0$, denote 
$$
F(\eta)(\zeta) = \int_{\R} G_N(\sigma - \zeta) g(\eta ,\sigma)d\sigma = \int_{0}^\infty G_N(\sigma) g(\eta ,\sigma + \zeta)d\sigma\,.
$$ 
Note that the integrals are well defined since $G_N \in L^1$ and $G_N (z) = 0$ for $z \leq 0$.  Since $\eta \in X$, we have that $\eta (\zeta) \to 0$
as $\zeta \to \infty$, and therefore $|g(\eta, \zeta)| \to 0$ as $\zeta \to \infty$. Hence, since $G_N \in L^1$ 
\begin{equation}
|F(\eta)(\zeta)| \leq C_N \sup_{\sigma \geq 0} |g(\eta ,\sigma + \zeta)|  \to 0 \qquad \textrm{as } \zeta \to \infty 
\end{equation}
and in particular $F : X \to X$. 

Next, we show that $F$ is a contraction on $\mathcal{B}_{\bar r}$. Indeed, for any $\varepsilon > 0$ there is $\bar{r} > 0$ and $\zeta_0 > 0$ such that for every $\eta_1, \eta_2 \in \mathcal{B}_{\bar{r}}$ and $\zeta \geq \zeta_0$ 
one has
\begin{align}
 |g(\eta_1,\sigma)-g(\eta_2,\sigma)| &\leq 
(m^2 e^{-2\sigma}  |\eta_1(\sigma) - \eta_2(\sigma)| + 2(N - 2)|e^{\eta_1(\sigma)} - e^{\eta_2(\sigma)}- \eta_1 (\sigma )+\eta_2 (\sigma ) |  
\\
&\leq \varepsilon \|\eta_1 -\eta_2\|_\infty \,,
 \end{align}
 where in the last step we used the mean value theorem for the function $m(x) = e^x - x$ and the fact that $|m'(x)| = |e^x - 1|$ is small if $x$ is small,
 that is, if $|\eta_i (\sigma)| \leq \bar{r} \ll 1$ for $i \in \{1, 2\}$ . 
Then, since $G_N \in L^1$
\begin{align}
\left\|F(\eta_1)-F(\eta_2)\right\|_{L^\infty ((\zeta_0, \infty))} 
\leq \varepsilon \left\|\eta_1 -\eta_2\right\|_{L^\infty ((\zeta_0, \infty))} \|G_N\|_{L^1} = 
 C_N \varepsilon \left\|\eta_1 -\eta_2\right\|_{L^\infty((\zeta_0, \infty))} \, .
\end{align}
Fix $\varepsilon > 0$ such that $C_N \varepsilon < \frac{1}{2}$, which in turn fixes $\bar{r}$.  

Finally, we show that $F$ maps $\mathcal{B}_{\bar{r}}$ into itself. 
By increasing $\zeta_0$ if necessary, we can assume that $\zeta e^{-2\zeta} < \varepsilon_0 \bar{r}$ for any $\zeta \geq \zeta_0$, 
where $0 < \varepsilon_0 < \frac{1}{C_N m^2}$. Then, for any $\zeta \geq \zeta_0$,
$$ 
\left\|F(0)\right\|_{L^\infty(\zeta_0, \infty)} = \sup_{\zeta \geq \zeta_0} 
 2m^2 \left| \int_0^\infty G_N(\sigma) e^{-2(\sigma  +\zeta)} (\sigma + \zeta) d\sigma \right|
< \varepsilon_0 C_N m^2\bar{r}  < \frac{1}{2} \bar{r}\,.
$$
Thus for any $\eta \in \mathcal{B}_{\bar{r}}$
one has 
$$
\left\|F(\eta)\right\|_{L^\infty(\zeta_0, \infty)}  \leq \left\|F(\eta) - F(0)\right\|_{L^\infty(\zeta_0, \infty)} + \left\|F(0)\right\|_{L^\infty(\zeta_0, \infty)} 
< \frac{1}{2} \|\eta\|_{L^\infty(\zeta_0, \infty)} + \frac{1}{2} \bar{r} \leq \bar{r} \,,
$$
and so $F$ is a contraction on $\mathcal{B}_{\bar{r}}$. The existence and uniqueness of solutions on $(\zeta_0, \infty)$
follows from the Banach fixed point theorem. To prove the uniqueness in $X$ suppose that there are two solutions $\eta_1$ and 
$\eta_2$. Fix $\bar{r}$ as above and by \eqref{KSuniqeq} we can choose $\zeta_0$ sufficiently large such that 
$\eta_1, \eta_2 \in \mathcal{B}_{\bar{r}}$. By the already proved uniqueness we obtain that $\eta_1 = \eta_2$ on $(\zeta_0, \infty)$. The fact that $\eta_1 \equiv \eta_2$ follows from the uniqueness of the initial value problems.  

Let us prove that the solution can be extended to the whole real line. We proceed by showing that a solution $u$ of
\eqref{eq:rdu} defined on the interval $(0, r_0)$ can be extended to the interval $(0, \infty)$. Indeed, let $(0, R_0)$
 be the maximal existence interval of the solution and assume $R_0 < \infty$. 
Since the nonlinearity is Lipschitz it suffices to show that $u$ is bounded on the interval $I_0 = (R_0/2, R_0)$.   
Next, observe that the functional 
\begin{equation}\label{dflf}
V(r)=\dfrac{(u'(r))^2 - u^2 (r) }{2}+\lambda e^{u(r)}
\end{equation}
is a Lyapunov functional for the flow, that is, $r \mapsto V(r)$ is decreasing on $r \in (0, R_0)$. Hence,   
\begin{equation}\label{lypf}
V(r) \leq V(R_0/2) = C^* \qquad \textrm{for any } r \in  I_0 \,,
\end{equation}
that is, $V$ is bounded from above on $I_0$. To prove that $u$ is bounded,
note that   \eqref{lypf} yields $(u'(r))^2 - u^2 (r) \leq C^* $, and therefore 
\begin{equation}
(u^2)' = 2uu'  \leq u^2 + (u')^2 \leq C^* + 2 u^2  \,.
\end{equation}
The Gronwall inequality yields that $u^2(r) \leq Ce^{2r}$ for $r \in I_0$, where $C$ depends on $C^*$, $R_0$, and $u(R_0/2)$. Thus, $u$ is bounded on $I_0$, and therefore can be continued beyond $R_0$,  a contradiction. 
 \end{proof}

Next we obtain more precise asymptotics on $w$ at infinity, which in turn transforms into more precise asymptotics of $u$ at the origin. 

\begin{lem}
\label{behaorigin}
If $\eta$ is a solution of \eqref{eq:fze}, \eqref{KSuniqeq}, then for any $\delta>0$,
$$
\lim_{\zeta \to \infty} e^{(2-\delta) \zeta} \eta (\zeta)  =  \lim_{\zeta \to \infty} e^{(2-\delta) \zeta} \eta' (\zeta) = 0 \,.
$$
\end{lem}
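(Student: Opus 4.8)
The natural route is to feed the structure of the right-hand side into the integral representation \eqref{intformzeta}. For $\zeta$ large write $\eta(\zeta)=\int_0^\infty G_N(\sigma)\,g(\eta,\sigma+\zeta)\,d\sigma=\int_\zeta^\infty G_N(t-\zeta)\,g(\eta,t)\,dt$, and recall from \eqref{dfge} that $g(\eta,t)=m^2e^{-2t}(\eta(t)+2t)-2(N-2)(e^{\eta(t)}-1-\eta(t))$. The first term is $O\big((1+t)e^{-2t}\big)$ as soon as $\eta$ is bounded, while the elementary bound $|e^x-1-x|\le\tfrac12e^{|x|}x^2$ makes the second one $O(\eta(t)^2)$. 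Since $G_N$ and $G_N'$ lie in $L^1(\mathbb{R})\cap L^\infty(\mathbb{R})$ and decay exponentially (here one uses $\beta<\alpha/2$ when $N>10$), and since $G_N(0)=0$, the weight $e^{-2t}$ carried by $g$ will be transferred to $\eta$ and to $\eta'$ after a short bootstrap.

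First I would prove the decay of $\eta$. Put $M(\zeta):=\sup_{s\ge\zeta}|\eta(s)|$; by \eqref{KSuniqeq} the function $M$ is non-increasing with $M(\zeta)\to0$, so fixing $\zeta_1>\tfrac12$ we may assume $|\eta(s)|\le1$ for $s\ge\zeta_1$, whence $|g(\eta,t)|\le m^2e^{-2t}(|\eta(t)|+2t)+C_N\,\eta(t)^2$ there. For $\zeta\ge\zeta_1$ and any $\zeta'\ge\zeta$, inserting this into \eqref{intformzeta} at $\zeta'$, bounding $|\eta(\sigma+\zeta')|\le M(\zeta')\le M(\zeta)$, using that $x\mapsto(1+x)e^{-2x}$ is decreasing on $[\tfrac12,\infty)$ to extract a factor $(1+\zeta)e^{-2\zeta}$, and using that $\int_0^\infty|G_N(\sigma)|e^{-2\sigma}\,d\sigma$ and $\int_0^\infty|G_N(\sigma)|\sigma e^{-2\sigma}\,d\sigma$ are finite, one obtains after taking the supremum over $\zeta'\ge\zeta$ an inequality of the shape
\[
M(\zeta)\ \le\ C_Nm^2(1+\zeta)e^{-2\zeta}+C_Nm^2e^{-2\zeta}M(\zeta)+C_NM(\zeta)^2 .
\]
For $\zeta$ large the last two terms are absorbed into $\tfrac12M(\zeta)$, giving $|\eta(\zeta)|\le M(\zeta)\le C(1+\zeta)e^{-2\zeta}$.

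For $\eta'$ I would differentiate the integral representation. Since $G_N\in C^1((0,\infty))$ with $G_N'\in L^1\cap L^\infty$, and $g(\eta,\cdot)$ is continuous and bounded, differentiating $\eta(\zeta)=\int_\zeta^\infty G_N(t-\zeta)g(\eta,t)\,dt$ under the integral sign is justified; because $G_N(0)=0$ the boundary term drops and $\eta'(\zeta)=-\int_0^\infty G_N'(\sigma)\,g(\eta,\sigma+\zeta)\,d\sigma$. Inserting the bound just obtained, $|g(\eta,\sigma+\zeta)|\le C(1+\sigma+\zeta)e^{-2(\sigma+\zeta)}$ for $\zeta$ large, so $|\eta'(\zeta)|\le Ce^{-2\zeta}\int_0^\infty|G_N'(\sigma)|(1+\sigma+\zeta)e^{-2\sigma}\,d\sigma\le C(1+\zeta)e^{-2\zeta}$. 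Hence for every $\delta>0$ we get $e^{(2-\delta)\zeta}|\eta(\zeta)|\le C(1+\zeta)e^{-\delta\zeta}\to0$ and $e^{(2-\delta)\zeta}|\eta'(\zeta)|\le C(1+\zeta)e^{-\delta\zeta}\to0$ as $\zeta\to\infty$, which is the assertion (in fact a slightly stronger statement).

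The only delicate point is the bootstrap for $\eta$: one must make sure the exponential--polynomial weight $(1+\zeta)e^{-2\zeta}$ survives the passage to the supremum over $\zeta'\ge\zeta$ (hence the monotonicity remark), and that the two error contributions, $e^{-2\zeta}M(\zeta)$ coming from $\eta$ inside the linear part of $g$ and $M(\zeta)^2$ coming from the quadratic part, are genuinely subordinate so that they can be absorbed on the left. Everything else --- the differentiation under the integral, the $L^1$--$L^\infty$ bounds on $G_N$ and $G_N'$, and the Taylor estimate for $e^x-1-x$ --- is routine.
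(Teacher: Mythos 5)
Your proof is correct, and it reaches the conclusion by a slightly different mechanism than the paper. The paper also starts from the representation \eqref{intformzeta}, but it controls the nonlinearity by the linearized bound $2(N-2)|e^{\eta}-1-\eta|\le\varepsilon|\eta|$ (valid since $\eta\to0$), then applies Young's convolution inequality to get an intermediate $L^1$ tail estimate $\int_\zeta^\infty|\eta|\le Ce^{-2(1-\delta/2)\zeta}$, and finally feeds this back into the pointwise representation; the derivative is handled exactly as you do, by replacing $G_N$ with $G_N'\in L^1$. You instead keep the quadratic Taylor bound $|e^{\eta}-1-\eta|\le\tfrac12e^{|\eta|}\eta^2$, work directly with the monotone tail supremum $M(\zeta)=\sup_{s\ge\zeta}|\eta(s)|$, and close the estimate by absorbing the terms $m^2e^{-2\zeta}M(\zeta)$ and $M(\zeta)^2$ (both subordinate because $M\to0$) into the left-hand side. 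The absorption step is carried out carefully (the monotonicity of $M$ and of $(1+x)e^{-2x}$ is exactly what is needed to pass to the supremum over $\zeta'\ge\zeta$), and the integrability and exponential decay of $G_N$, $G_N'$ — including the point $\beta<\alpha/2$ for $N>10$ and $G_N(0)=0$ when differentiating under the integral — are used correctly. What your route buys is a sharper conclusion, $|\eta(\zeta)|+|\eta'(\zeta)|\le C(1+\zeta)e^{-2\zeta}$, with no loss of $\delta$ and no intermediate $L^1$ step; the paper's route is marginally softer in that it only needs the smallness of the linearized coefficient rather than a genuinely quadratic remainder bound, but it yields only the rate $e^{-2(1-\delta/2)\zeta}$, which is all the lemma asserts.
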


\begin{proof}
By applying  Young convolution inequality to \eqref{intformzeta}, we have 
\begin{align*}
\int_\zeta^\infty |\eta (\sigma)|d\sigma \leq  \|G_N\|_{L^1} \int_\zeta^\infty |g(\sigma )|d\sigma = 
C_N \int_\zeta^\infty |g(\sigma )|d\sigma \,.
\end{align*}
Since, for every $\varepsilon > 0$,
one has $2(N - 2)|e^a - 1 - a| \leq \varepsilon |a|$  for any sufficiently small $|a|$,  and since $\eta (\sigma )\to 0$ as $\sigma \to \infty$,  we deduce that for 
any $\delta>0$, there exists large  $\zeta_0$ such that, for any $\zeta \geq \zeta_0$,
\begin{equation}
\label{estg}
|g(\zeta)|\leq \varepsilon |\eta (\zeta)| + 2 m^2 \zeta e^{-2\zeta} + m^2 e^{-2\zeta} |\eta(\zeta)| \leq 2 \varepsilon (e^{-2(1-\delta/2)\zeta} + |\eta(\zeta)|).
\end{equation}
This implies that, for $\varepsilon = \frac{1}{4}$,  any sufficiently small $\varepsilon, \delta > 0$ and $\zeta \geq \zeta_0$ 
$$
 \int_\zeta^\infty |\eta(\sigma )|d\sigma \leq C e^{-2(1-\delta/2)\zeta} \,,
$$
where $C$ depends on $\delta$ and $N$.
Substituting this estimate and \eqref{estg}  with $\varepsilon = \frac{1}{4}$ to \eqref{intformzeta}, we obtain that 
\begin{align}
\label{esteta}
|\eta (\zeta)|&\leq \frac{1}{2} \int_\sigma^\infty |G_N(\tau - \sigma)| (e^{-2(1-\delta)\sigma}+|\eta (|\sigma | ) d\tau  \leq C e^{-2(1-\delta/2)\zeta}
\end{align}
and the first assertion follows. 

Finally, since 
\begin{equation}
\eta'(\sigma) =  \int_{\R} G_N'(\tau - \sigma) g(\tau)d\tau
\end{equation}
and $G_N' \in L^1$, we can proceed as above by replacing $G_N$ by $G_N'$ and conclude the proof.
\end{proof}

Next, we show that $U^\ast \in H^1_{loc}(\R^N)$ where $U^\ast$ is defined in Theorem \ref{KSsingintro}.

\begin{lem}
\label{decaderu}
If $U^\ast$ is as in Theorem \ref{KSsingintro}, then
$$
\lim_{r \to 0} (U^\ast )^\prime (r) + \dfrac{2}{r} = 0.
$$
Moreover,  $U^\ast \in H^1 (B_{R_1})$ for any $R_1 > 0$.
\end{lem}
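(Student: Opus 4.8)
The plan is to read off both assertions from the change of variables \eqref{defw}--\eqref{fdrv} together with the decay estimate of Lemma \ref{behaorigin}, so essentially no new analysis is required.

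For the first identity, I would use that $U^*(r) = \eta(\zeta) + 2\zeta$ with $\zeta = \ln(m/r)$, $m = \sqrt{2(N-2)/\lambda}$, and that $\eta$ is the solution of \eqref{eq:fze}, \eqref{KSuniqeq} produced in Proposition \ref{uniq}. By \eqref{fdrv},
\[
(U^*)'(r) + \frac{2}{r} = -\frac{1}{r}\,\eta'(\zeta) = -\frac{1}{m}\,e^{\zeta}\eta'(\zeta).
\]
Since $r \to 0^+$ corresponds to $\zeta \to \infty$, it suffices to show $e^{\zeta}\eta'(\zeta) \to 0$. Fixing any $\delta \in (0,1)$ and writing $e^{\zeta}\eta'(\zeta) = e^{-(1-\delta)\zeta}\bigl(e^{(2-\delta)\zeta}\eta'(\zeta)\bigr)$, the first factor tends to $0$ while the second tends to $0$ by Lemma \ref{behaorigin}; hence the product tends to $0$, which gives $\lim_{r\to 0}(U^*)'(r) + 2/r = 0$.

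For the membership $U^* \in H^1(B_{R_1})$, I would first note that by Proposition \ref{uniq} (equivalently Theorem \ref{KSsingintro}) $U^*$ is a classical positive solution of \eqref{sectKS} on $(0,\infty)$, hence smooth away from the origin, so only the behaviour as $r\to 0^+$ is at issue. Writing the norm in radial coordinates,
\[
\|U^*\|_{H^1(B_{R_1})}^2 \sim \int_0^{R_1}\Bigl(|(U^*)'(r)|^2 + |U^*(r)|^2\Bigr) r^{N-1}\,dr,
\]
I would control the two terms separately. By the asymptotics \eqref{ubbc} there is $C>0$ with $|U^*(r)| \le C(1+|\ln r|)$ near $0$, so $|U^*(r)|^2 r^{N-1} \le C^2(1+|\ln r|)^2 r^{N-1}$, which is integrable on $(0,R_1)$. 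By the first part, $(U^*)'(r) = -\tfrac{2}{r}+o(1/r)$ as $r\to 0^+$, so there is $r_0\in(0,R_1)$ with $|(U^*)'(r)| \le 3/r$ for $r\in(0,r_0)$, whence $|(U^*)'(r)|^2 r^{N-1} \le 9\,r^{N-3}$ on $(0,r_0)$; since $N\ge 3$ this is integrable at the origin. Adding the finite contribution of the compact interval $[r_0,R_1]$, where $U^*$ is smooth, yields $U^*\in H^1(B_{R_1})$.

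I do not expect any real obstacle: the substantive estimate is already contained in Lemma \ref{behaorigin}. The only point to watch is that the derivative term produces the borderline-integrable weight $r^{N-3}$, which is precisely why the hypothesis $N\ge 3$ is used (and needed) here.
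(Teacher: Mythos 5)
Your argument is correct and follows essentially the same route as the paper: the identity $(U^*)'(r)+2/r=-\tfrac{1}{m}e^{\zeta}\eta'(\zeta)$ from \eqref{fdrv} combined with the decay of Lemma \ref{behaorigin}, and then a direct computation of the radial $H^1$ norm using the bounds $|U^*(r)|\leq C(1+|\ln r|)$ and $|(U^*)'(r)|\leq C/r$ near the origin, integrable against $r^{N-1}$ for $N\geq 3$. The only difference is presentational (you factor $e^{\zeta}\eta'(\zeta)=e^{-(1-\delta)\zeta}\,e^{(2-\delta)\zeta}\eta'(\zeta)$ explicitly, which the paper leaves implicit), so no further changes are needed.
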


\begin{proof}
By Proposition \ref{uniq},  $U^*$ exists on $(0, \infty)$. 
Relation \eqref{fdrv} implies that
\begin{align*}
(U^\ast )^\prime (r)&= 
- \dfrac{1}{r} \eta ' \left(\zeta \right) - \dfrac{2}{r} = - \frac{1}{m} e^\zeta \eta ' \left(\zeta \right) - \dfrac{2}{r}
\end{align*}
and the first assertion follows from Lemma \ref{behaorigin}. 
Next, recall that  $U^\ast (r) = \eta(\zeta) - 2 \ln \frac{r}{m}$. So, using Lemma \ref{behaorigin}, we deduce that
\begin{align*}
\left\|U^\ast \right\|_{H^1 (B_{R_1})}^2 &= \omega_N \int_0^{R_1} (|(U^\ast)' |^2 +| U^\ast |^2 ) r^{N-1} dr\\
&\leq C\int_0^{R_1} (r^{-2}+ (\ln r)^2 +1)r^{N-1}dr<\infty 
\end{align*}
for $N \geq 3$. This establishes the lemma.
\end{proof}

Next, we focus on the behavior of $U^*$ for large $r$. As a preliminary we prove the following lemma which is a Pohozaev-type
identity.

\begin{lem}\label{liouville}
Fix $N \geq 3$ and $\lambda \in (0, \lambda_N^*)$, where $\lambda_N^*$ is as in Theorem 
\ref{KSsingintro}.  If $U^*$ is the unique solution of \eqref{KSwhole}, then $U^* > \underline{u}_\lambda$ and 
\begin{equation}
\liminf_{r \to \infty} U^*(r) > \underline{u}_\lambda \,,
\end{equation}
where $\underline{u}_\lambda < 1$ is the smaller solution of $u = \lambda e^u$. 
\end{lem}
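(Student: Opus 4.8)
The plan is to argue by contradiction, exploiting the Lyapunov functional $V$ from \eqref{dflf} together with a Pohozaev-type integral identity. First I would record the elementary fact that $\underline{u}_\lambda$ and $\bar{u}_\lambda$ are the two zeros of the function $h(s) = \lambda e^s - s$, with $h<0$ on $(\underline{u}_\lambda, \bar{u}_\lambda)$ and $h>0$ outside, and that the associated potential $W(s) = \lambda e^s - \tfrac{s^2}{2}$ has a local minimum at $\underline{u}_\lambda$ and a local maximum at $\bar{u}_\lambda$; so $V(r) = \tfrac12 (U^*)'(r)^2 - \tfrac12 U^*(r)^2 + \lambda e^{U^*(r)}$ is a sum of kinetic energy and potential $W(U^*(r))$. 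Since $V' (r) = -\tfrac{N-1}{r} (U^*)'(r)^2 \le 0$, $V$ is nonincreasing; and from the precise asymptotics \eqref{KSwhole}, i.e. $U^*(r) = -2\ln r + \ln\frac{2(N-2)}{\lambda} + O(r^{2\delta})$ and $(U^*)'(r) = -\tfrac2r + o(1/r)$ (Lemma \ref{decaderu}), one computes $\lim_{r\to 0^+} V(r) = +\infty$ (the dominant term is $\lambda e^{U^*} \sim \tfrac{2(N-2)}{r^2}$ against $-\tfrac12 U^{*2}\sim -2(\ln r)^2$ and $\tfrac12 (U^*)'^2 \sim \tfrac{2}{r^2}$, so $V(r) \sim \tfrac{2N-2}{r^2} \to +\infty$). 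Hence $V(r) > W(\bar u_\lambda) \ge W(\underline u_\lambda)$ for all $r>0$ provided $\lim_{r\to\infty} V(r) \ge W(\underline u_\lambda)$ — but that last inequality is exactly what could fail, so this alone does not close the argument and one needs the quantitative input.

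The mechanism I would use is a Pohozaev/Rellich identity in the radial variable. Multiplying the equation $-(r^{N-1}u')' = r^{N-1}(\lambda e^u - u)$ by $r\,u'$ (or equivalently integrating $\frac{d}{dr}(r^N V(r))$ against suitable weights) and integrating over $(\varepsilon, R)$, then letting $\varepsilon \to 0$ and $R\to\infty$, produces an identity of the schematic form
\begin{equation*}
\int_0^\infty r^{N-1}\Big( \tfrac{N-2}{2} (U^*)'^2 + N\,\big[\tfrac12 U^{*2} - \lambda e^{U^*} - (\tfrac12 \underline u_\lambda^2 - \lambda e^{\underline u_\lambda})\big]\Big)\,dr = (\text{boundary terms}),
\end{equation*}
where the boundary term at $0$ is finite thanks to Lemma \ref{decaderu} and the explicit singular profile, and the boundary term at $\infty$ vanishes if $U^*$ and $(U^*)'$ stay bounded (which the Lyapunov bound $V\le V(r_0)$ on $(r_0,\infty)$ guarantees once we know $\liminf U^* \ge \underline u_\lambda$, or can be bootstrapped). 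Supposing for contradiction that $\liminf_{r\to\infty} U^*(r) = \underline u_\lambda$, one shows $U^* \to \underline u_\lambda$ and $(U^*)' \to 0$: indeed if $U^*$ converged instead to $\bar u_\lambda$ it would have to do so oscillating, but then $V$ would converge to $W(\bar u_\lambda) > W(\underline u_\lambda)$, contradicting a sharp lower control; so the only limiting value compatible with $\liminf = \underline u_\lambda$ is the constant $\underline u_\lambda$ itself. In that regime the normalized potential term in the Pohozaev integrand is $\ge 0$ near infinity and the whole integrand is shown to be strictly positive on a set of positive measure, forcing the left side to be strictly positive, while a careful evaluation of the boundary term at the origin — using the explicit two-term expansion of $U^*$ and the error estimate $O(r^{2\delta})$, $O(r^{2\delta-1})$ for the derivative — shows that term has a definite sign. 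The dimension-dependent numerical thresholds $\lambda_N^*$ enter precisely here: for $3\le N\le 5$ one must check by an explicit (numerical) estimate that the constant produced at the origin has the right sign relative to $W(\bar u_\lambda) - W(\underline u_\lambda)$, and for $N\ge 6$ the bound $\lambda < 1/e$ suffices because then the geometry of $W$ is favorable. Combining the sign of the boundary term with the nonnegativity of the interior integrand yields a contradiction, hence $\liminf_{r\to\infty} U^*(r) > \underline u_\lambda$.

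Once $\liminf U^* > \underline u_\lambda$ is known, the pointwise bound $U^*(r) > \underline u_\lambda$ for every $r$ follows easily: at a putative first point $r_1$ where $U^*(r_1) = \underline u_\lambda$ we would have $(U^*)'(r_1) \le 0$, and since $\underline u_\lambda$ is a saddle for the (approximate) autonomous flow — $h'(\underline u_\lambda) = \lambda e^{\underline u_\lambda} - 1 < 0$ — a standard ODE comparison (or directly the monotonicity of $V$ combined with $V(r_1) = \tfrac12 (U^*)'(r_1)^2 + W(\underline u_\lambda)$, which would force $V$ to have already dropped below a level incompatible with $\liminf U^* > \underline u_\lambda$) gives a contradiction; alternatively one invokes that solutions cannot cross the constant solution $\underline u_\lambda$ tangentially and, crossing transversally downward, would be pushed to $\liminf \le \underline u_\lambda$. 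The main obstacle is unquestionably the Pohozaev computation and, within it, the clean evaluation of the boundary contribution at the origin: this is where the precise $O(r^{2\delta})$ remainder from Theorem \ref{KSsingintro} is indispensable and where the explicit small-$\lambda$ thresholds for $N\in\{3,4,5\}$ come from. Everything else (monotonicity of $V$, blow-up of $V$ at $0$, the final pointwise inequality) is soft.
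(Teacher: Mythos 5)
Your overall strategy (monotone Lyapunov functional plus a Pohozaev-type identity with boundary terms at $0$ and $\infty$) is in the same spirit as the paper's proof, but two of your key mechanisms are misplaced and one case is not covered. First, the restriction $\lambda<\lambda_N^*$ does \emph{not} come from the boundary term at the origin. The paper sets $v=U^*-\underline{u}_\lambda$, so that $-\Delta v+v=\underline{u}_\lambda(e^v-1)$, multiplies by $r^N v'$ \emph{and also} by $r^{N-1}v$, and combines the two identities so that the gradient term $\tfrac{N-2}{2}\int |v'|^2 r^{N-1}\,dr$ is eliminated; the boundary terms at $\rho\to 0$ all vanish for $N\ge 3$, since $|v(\rho)|\le C|\ln\rho|$ and $|v'(\rho)|\le 4/\rho$ are harmless against the weights $\rho^N,\rho^{N-1}$ (so this is the easy part, not the ``main obstacle''). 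What remains is the inequality $\int_0^R v^2 r^{N-1}dr \le \underline{u}_\lambda\int_0^R\bigl(N(e^v-1-v)-\tfrac{N-2}{2}v(e^v-1)\bigr)r^{N-1}dr+o_R(1)$, and the contradiction comes from the pointwise positivity of $f(x)=x^2-\underline{u}_\lambda\bigl(N(e^x-1-x)-\tfrac{N-2}{2}x(e^x-1)\bigr)$, verified through $f''>0$: it is exactly here, in the interior integrand, that $\underline{u}_\lambda<1$ suffices for $N\ge 6$ while an explicit numerical smallness of $\underline{u}_\lambda$ (equivalently $\lambda<\lambda_N^*$) is needed for $N=3,4,5$. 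Your schematic identity keeps the gradient term, omits the second multiplier $r^{N-1}v$, and never reaches this comparison; a single Pohozaev identity has positive quantities on both sides and yields no contradiction by itself, and your plan to extract the smallness of $\lambda$ from the sign of an origin boundary term cannot work because that term is zero in the limit.

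Second, your logical order (liminf first, then the pointwise bound) leaves a gap: the negation of the liminf statement also allows $U^*$ to cross strictly below $\underline{u}_\lambda$, a case your ``the only limiting value compatible with $\liminf=\underline{u}_\lambda$ is the constant'' step does not address, and your treatment of a first crossing (``would be pushed to $\liminf\le\underline{u}_\lambda$'') is asserted rather than proved. The paper avoids this by building both scenarios into one dichotomy: either $v$ has a first zero $R_0<\infty$, in which case the same integral inequality is run on $(0,R_0)$ and the upper boundary term $R_0^N(v'(R_0))^2\ge 0$ has the favorable sign, or $v>0$ with $\liminf v=0$, in which case the Lyapunov functional forces $v\to 0$, a comparison with $C_1e^{-\varepsilon_0(r-R)/2}$ gives exponential decay of $v$ and $v'$, and the boundary terms at infinity vanish. (A small slip as well: $W(s)=\lambda e^s-s^2/2$ has a local \emph{maximum} at $\underline{u}_\lambda$ and a local minimum at $\bar{u}_\lambda$, and $W(\underline{u}_\lambda)>W(\bar{u}_\lambda)$, so your parenthetical comparison of levels is backwards.) In short, the skeleton is right but, as written, the proof would not close.
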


\begin{proof}
If $v = U^\ast - \underline{u}_\lambda$, 
then $v$ satisfies 
\begin{equation}\label{eqliouville}
-\Delta v +v= \underline{u}_\lambda (e^v -1)
\end{equation}
with 
\begin{align}
v(r) &= - 2\ln r + \ln \frac{2(N-2)}{\lambda} - \underline{u}_\lambda + O(1) = 
- 2\ln r + \ln \frac{2(N-2)}{\lambda e^{\underline{u}_\lambda}} + O(1) \\
&=
- 2\ln r + \ln \frac{2(N-2)}{\underline{u}_\lambda} + O(1) \,.
\end{align}
For a contradiction, assume that either there exists the smallest $R_0$ such that $v(R_0) = 0$, 
or $v > 0$ and $\liminf_{r \to \infty} v(r) = 0$. In the latter case
we set $R_0 = \infty$. Denote $\varepsilon_0 = 1 - \underline{u}_\lambda \in (0, 1)$.

We claim that $R_0 = \infty$ implies $\lim_{r \to \infty} v(r) = 0$. Indeed, if not then there exist
$v_0 > 0$ and a sequence $r_n \to \infty$
as $n \to \infty$ such that $v(r_n) \geq v_0 > 0$. Since $\liminf_{r \to \infty} v(r) = 0$, by the mean value theorem,  
there is a local minimizer $r^*$ of $v$. In particular,  $v'(r^*) = 0$
and thus $(U^*)'(r^*) = 0$. Since the Lyapunov functional $V$ defined by \eqref{dflf} is decreasing, we obtain that
$V(r^*) > V(r)$ for any $r > r^*$. This implies that there is no $r > r^*$ such that $U^*(r) = U^*(r^*)$ and since $r^*$
is a local minimum $U^*(r) \geq U^*(r^*) > 0$ for any $r \geq r^*$. This contradicts $\liminf_{r\to \infty} U^*(r) = 0$, 
and the claim follows. 
 
If $R_0 = \infty$, then since $v \searrow 0$, 
we can  fix $R > 0$
such that $|e^{v(r)} - 1| \leq (1 + \varepsilon_0)|v(r)|$ for each $ r \geq R$.  Consequently, 
\begin{equation}
-\Delta v + v \leq  \underline{u}_\lambda (1 + \varepsilon_0) v = (1 - \varepsilon_0^2) v \qquad \textrm{in } \R^N \setminus B_{R}(0)\,.
\end{equation} 
 Define $\psi (r)= C_1 e^{-\varepsilon_0/2 (r-R)}$, for some $C_1>0$ specified below. It is easy to see after increasing $R$ if necessary, that we have 
$$
-\Delta \psi +  \psi =   
\left(1 - \frac{\varepsilon_0^2}{4} \right) \psi - \varepsilon_0 \frac{(N-1)}{2r} \psi \geq \left(1 - \varepsilon_0^2 \right) \psi 
\qquad \textrm{in }\ \R^N \backslash B_R (0).
$$
Fix $R$ and choose $C_1$ such that $C_1 > v(R)$. Consequently
$\psi (R) -v(R) \leq 0$ and $\lim_{r\rightarrow \infty} (v(r) - \psi (r))=0$. 
Then, a comparison principle yields $v(r)\leq \psi (r)$, for all $r\geq R$. Also, elliptic regularity theory implies that 
 $v^\prime$ decays exponentially at infinity.

Fix any $R \in (0, R_0)$
Multiplying \eqref{eqliouville} by $r^N v^\prime$ and integrating, we find, for any $0 < \rho < R$,
\begin{align*}
\dfrac{N-2}{2}&\int_{\rho}^R |v'|^2 r^{N-1} dr + \left[\dfrac{r^N (v^\prime (r))^2}{2} \right]_\rho^R - \left[\dfrac{r^N v^2}{2}\right]_\rho^R+ 
\dfrac{N}{2} \int_{\rho}^R v^2 r^{N-1} dr\\
&+\underline{u}_\lambda \left[r^N (e^v -1-v) \right]_\rho^R
 = N \underline{u}_\lambda \int_{\rho}^R (e^v-1-v )r^{N-1}dr.
\end{align*}
On the other hand, multiplying \eqref{eqliouville} by $v r^{N-1}$ and integrating, we have
$$
\int_{\rho}^R |v'|^2 r^{N-1}dr - [r^{N-1}v v^\prime]_\rho^R + \int_{\rho}^R v^2 r^{N-1} dr = \underline{u}_\lambda \int_{\rho}^R v( e^v-1) r^{N-1}dr.
$$
Since for small $\rho$ one has $|v(\rho)| \leq C |\ln \rho|$ by assumption,  and 

\begin{equation}
|v'(\rho)| = |(U^*)'(\rho)| \leq \frac{4}{\rho},
\end{equation}
by Lemma \ref{decaderu}, we have that the lower boundary terms converge to $0$ as $\rho \to 0$. Also, 
if $R_0 = \infty$
since $v(R)$ and $v'(R)$ decay exponentially as $R \to \infty$, the upper boundary terms
decay to $0$ as $R \to \infty$. If $R_0 < \infty$, one has $v(R_0) = 0$ and clearly $R_0^N (v'(R_0))^2 \geq 0$. 
This implies that
\begin{equation}
\label{KSpoho}
\int_0^{R}v^2 r^{N-1} dr + o_R(1) \leq \underline{u}_\lambda \left( N \int_0^{R} (e^v -1-v) r^{N-1}dr -\dfrac{N-2}{2}\int_0^{R} v(e^v -1) r^{N-1}dr\right).
\end{equation}
where $o_{R}(1)\rightarrow 0$ as $R\rightarrow \infty$ if $R_0 = \infty$ and $o_R(1) = 0$ if $R_0 < \infty$.
Let us denote 
$$
f(x)= x^2 -\underline{u}_\lambda \left( N  (e^x -1-x) -\dfrac{N-2}{2} x(e^x -1) \right) \,.
$$
We will obtain a contradiction to \eqref{KSpoho} for sufficiently large $R$ if we prove that $f(x) > 0$ for any $x > 0$. 
Since  $f(0)=f'(0)=0$, it suffices to show that $f''(x)>0$ for any $x >  0$. 
A simple computation shows that $f''(x)= 2 -\underline{u}_\lambda \left( 2 e^x -\dfrac{N-2}{2} xe^x  \right) $ and that 
$$
\min_{x \geq 0} f'' (x)= 
\begin{cases}f'' \left( \dfrac{6-N}{N-2}\right),& \text{ if }N < 6,\\  
f''(0) = 2 (1 - \underline{u}_\lambda), & \text{ if }N \geq 6.
\end{cases}
$$
Since by definition $\underline{u}_{\lambda}<1$, we have $f'' (x) > 0$
for  $N\geq 6$, a contradiction.  Also, we obtain a contradiction if  
$$
\underline{u}_\lambda <\dfrac{4}{N-2} e^{-\frac{6-N}{N-2}}\approx 
\begin{cases}
0.20, &\textrm{if } N=3,\\
0.74, &\textrm{if } N=4,\\ 
0.96, &\textrm{if }\ N=5   .
\end{cases}
$$
One can check that the previous values of  $\underline{u}_\lambda < 0.74$ (resp. $\underline{u}_\lambda < 0. 96$) corresponds to 
\begin{equation}
\lambda = 
\begin{cases}
0.16, &\textrm{if } N=3,\\
0.35, &\textrm{if } N=4,\\ 
0.36, &\textrm{if }\ N=5 \,,
\end{cases} 
\end{equation}
that is, $\lambda < \lambda_N^*$.
\end{proof}

\begin{rmq} \label{rmkpoho}
Under the assumptions of Lemma \ref{liouville} one has $U^* > 0$. 
\end{rmq}

Next, we prove that $U^\ast$ oscillates around $\overline{u}_\lambda$ as claimed in Theorem \ref{KSsingintro}.

\begin{lem}
\label{corbehazero}
Let $N\geq 3$ and suppose that $\lambda \in (0, \lambda^*_N)$, where $\lambda^*_N$ is as in the statement of 
Theorem \ref{KSsingintro}. If $U^*$ is as in Theorem \ref{KSsingintro} (for the existence and uniqueness see Proposition
\ref{uniq}), then there exists a sequence $0<R_\lambda^1 <\ldots <R_\lambda^k< \ldots \rightarrow \infty$  such that 
$U^\ast (R_\lambda^k) = \bar{u}_\lambda$. In particular, there is a sequence $(R_\lambda^{*k})$ such that 
$(U^\ast)'(R_\lambda^{*k}) = 0$.
\end{lem}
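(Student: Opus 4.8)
The plan is to argue by contradiction. Suppose $U^\ast$ equals $\bar{u}_\lambda$ only finitely often on $(0,\infty)$; since $U^\ast$ blows up at the origin it is not the constant $\bar{u}_\lambda$, so $w:=U^\ast-\bar{u}_\lambda$ is nonzero and of constant sign on some interval $[R_1,\infty)$. The first step is to record a priori bounds. By Proposition \ref{uniq} the function $U^\ast$ is global on $(0,\infty)$, and the functional $V$ from \eqref{dflf} is nonincreasing, so $V(r)\le V(R_1)$ for $r\ge R_1$; reading this as $\lambda e^{U^\ast}\le V(R_1)+(U^\ast)^2/2$ and using that the exponential grows faster than any quadratic shows $U^\ast$ is bounded above on $[R_1,\infty)$, and then $((U^\ast)')^2=2V+(U^\ast)^2-2\lambda e^{U^\ast}$ is bounded there as well. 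By Lemma \ref{liouville}, $\delta_0:=\liminf_{r\to\infty}U^\ast-\underline{u}_\lambda>0$, so after enlarging $R_1$ we may assume $\underline{u}_\lambda+\delta_0\le U^\ast\le M$ on $[R_1,\infty)$.

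The core of the argument is to show that the sign hypothesis on $w$ forces $U^\ast(r)\to\bar{u}_\lambda$. Write $h(s)=\lambda e^s-s$, so $h<0$ exactly on $(\underline{u}_\lambda,\bar{u}_\lambda)$ and $h>0$ on $(\bar{u}_\lambda,\infty)$, and observe that $(r^{N-1}(U^\ast)')'=-r^{N-1}h(U^\ast)$. If $w>0$ on $[R_1,\infty)$, then $h(U^\ast)>0$, so $r^{N-1}(U^\ast)'$ is strictly decreasing; hence it has eventually constant sign, $U^\ast$ is eventually monotone, and being bounded it converges to some $L\ge\bar{u}_\lambda$. If $L>\bar{u}_\lambda$, passing to the limit in the equation gives $(U^\ast)''+\frac{N-1}{r}(U^\ast)'\to-h(L)<0$, whence $(r^{N-1}(U^\ast)')'\le -c_0 r^{N-1}$ for some $c_0>0$ and all large $r$, so $(U^\ast)'(r)\le -c_1 r$ and $U^\ast(r)\to-\infty$, contradicting $U^\ast>0$; therefore $L=\bar{u}_\lambda$. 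The case $w<0$ is symmetric: there $\underline{u}_\lambda<U^\ast<\bar{u}_\lambda$, so $h(U^\ast)<0$, $r^{N-1}(U^\ast)'$ is increasing, $U^\ast$ converges to some $L\in(\underline{u}_\lambda,\bar{u}_\lambda]$, and $L<\bar{u}_\lambda$ would give $(U^\ast)''+\frac{N-1}{r}(U^\ast)'\to-h(L)>0$, forcing $U^\ast\to+\infty$; hence again $L=\bar{u}_\lambda$.

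Once $U^\ast\to\bar{u}_\lambda$, i.e.\ $w\to0$, the relation $\lambda e^{\bar{u}_\lambda}=\bar{u}_\lambda$ turns the equation for $w$ into
\[
w''+\frac{N-1}{r}w'+q(r)\,w=0,\qquad q(r):=\bar{u}_\lambda\,\frac{e^{w(r)}-1}{w(r)}-1\ \longrightarrow\ \bar{u}_\lambda-1>0 ,
\]
where $\bar{u}_\lambda-1>0$ because $\bar{u}_\lambda>\ln(1/\lambda)>1$ for $\lambda<1/e$. The substitution $w=r^{-(N-1)/2}y$ removes the first-order term and leaves $y''+\tilde q(r)y=0$ with $\tilde q(r)=q(r)-\frac{(N-1)(N-3)}{4r^2}\to\bar{u}_\lambda-1$; thus $\tilde q(r)\ge\kappa:=(\bar{u}_\lambda-1)/2>0$ for all large $r$. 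Comparing with $\phi''+\kappa\phi=0$ by the Sturm comparison theorem, $y$ has a zero between any two consecutive zeros of $\sin(\sqrt{\kappa}\,\cdot\,)$, hence infinitely many zeros on $[R_1,\infty)$; so $w$ changes sign infinitely often there, contradicting the hypothesis. This shows $U^\ast$ attains the value $\bar{u}_\lambda$ along an increasing sequence $R_\lambda^1<R_\lambda^2<\cdots\to\infty$, and Rolle's theorem on each $(R_\lambda^k,R_\lambda^{k+1})$ yields $R_\lambda^{\ast k}$ with $(U^\ast)'(R_\lambda^{\ast k})=0$.

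The main obstacle is the middle step: ruling out that $U^\ast$ stabilizes at a value different from $\bar{u}_\lambda$ — convergence to $\underline{u}_\lambda$ is precisely what Lemma \ref{liouville}, the Pohozaev-type identity and the source of the dimension-dependent restriction $\lambda<\lambda_N^\ast$, is designed to prevent — or that it escapes to $+\infty$. Once $U^\ast\to\bar{u}_\lambda$ is secured, the linearization around $\bar{u}_\lambda$ has the constant positive limiting potential $\bar{u}_\lambda-1$ for every $N\ge 3$, and the oscillation is a routine application of Sturm theory (the split $3\le N\le 9$ versus $N\ge 10$ plays no role here, only later for the finiteness of oscillations of regular solutions).
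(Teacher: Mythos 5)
Your proof is correct, but it follows a longer route than the paper's. The paper does not argue by contradiction and never needs to know that $U^\ast$ converges: it sets $w(r)=r^{\frac{N-1}{2}}(U^\ast(r)-\bar u_\lambda)$, writes $w''=m(r)w$ with $m(r)=F(U^\ast(r))+\frac{(N-1)(N-3)}{4r^2}$, where $F(x)=\frac{x-\lambda e^x}{x-\bar u_\lambda}$ has a removable singularity at $x=\bar u_\lambda$ with $F(\bar u_\lambda)=1-\bar u_\lambda<0$, is negative on all of $(\underline u_\lambda,\infty)$ and tends to $-\infty$ at infinity; since $\inf U^\ast>\underline u_\lambda$ by Lemma \ref{liouville}, $F(U^\ast(r))$ is bounded away from zero uniformly, so $m(r)\le-\varepsilon_1$ for large $r$ and Sturm--Picone immediately gives infinitely many zeros of $w$ — no sign dichotomy, no Lyapunov-functional upper bound on $U^\ast$, and no convergence step are needed. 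You instead assume $w=U^\ast-\bar u_\lambda$ has eventually constant sign, prove via the monotonicity of $r^{N-1}(U^\ast)'$ (together with the a priori bounds and Lemma \ref{liouville}) that $U^\ast\to\bar u_\lambda$, and then apply Sturm comparison to the linearization with limiting potential $\bar u_\lambda-1>0$; this is valid (the trichotomy, the integration of $(r^{N-1}(U^\ast)')'$ to exclude $L\neq\bar u_\lambda$, and the Liouville substitution $w=r^{-(N-1)/2}y$ are all sound), and it uses exactly the same two external inputs as the paper, namely Lemma \ref{liouville} and Sturm comparison. What the paper's argument buys is brevity and the stronger statement that the equation for $w$ is oscillatory near infinity with no sign hypothesis at all; what your argument buys is that it avoids having to notice the uniform negativity of $F$ across the value $\bar u_\lambda$, at the cost of the extra asymptotic analysis. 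One small blemish: your justification ``$\bar u_\lambda>\ln(1/\lambda)>1$'' is circular as written (since $\bar u_\lambda-\ln\bar u_\lambda=\ln(1/\lambda)$, the inequality $\bar u_\lambda>\ln(1/\lambda)$ is equivalent to $\bar u_\lambda>1$); the fact $\underline u_\lambda<1<\bar u_\lambda$ for $\lambda<1/e$ is elementary (compare the sign of $\lambda e^\mu-\mu$ at $\mu=0,1$) and is stated in the introduction, so simply cite that instead.
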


\begin{proof}
By Lemma \ref{liouville} one has $M := \inf U^* > \underline{u}_\lambda$. 
If we denote
$w(r)=r^{\frac{N-1}{2}} (U^*(r) - \bar{u}_\lambda)$, then standard calculations yield that $w$ satisfies
$$
w^{\prime \prime}= \left(\dfrac{U^* - \lambda e^{U^*}}{U^* - \bar{u}_\lambda } + \dfrac{(N-1)(N-3)}{4r^2}  \right)w =: m(r)w.
$$
Set
\begin{equation}
F(x) = \frac{x - \lambda e^x}{x - \bar{u}_\lambda} \quad x \neq \bar{u}_\lambda, \qquad  F(\bar{u}_\lambda) = 1- \bar{u}_\lambda\, .
\end{equation}
It is easy to see that $F$ is continuous and $F \to -\infty$ as $x \to \infty$.  Furthermore, the numerator is positive 
if and only if $x \in (\underline{u}_\lambda, \bar{u}_\lambda)$, 
whereas the denominator is positive if and only if $x > \bar{u}_\lambda$. Thus, $F < 0$
on $(\underline{u}_\lambda, \infty)$, and consequently $F \leq - 2\varepsilon_1 < 0$ on $[M, \infty)$. 
Choose $R_2$ large such that 
\begin{equation}
\dfrac{(N-1)(N-3)}{4r^2} < \varepsilon_1
\end{equation}
and we obtain $m(r) \leq - \varepsilon_1$ for $r \geq R_2$. By the Sturm-Picone comparison theorem we obtain that $w$ has infinitely many zeros on $(R, \infty)$, 
which in particular implies that $U^*$ intersects $\bar{u}_\lambda$ infinitely many times.  
\end{proof}

\section{Convergence to the singular solution.}
\label{secconv}
  In this section, we finish the proof of Theorem \ref{KSsingintro}, that is,  for any fixed $\lambda > 0$ we show that the solution 
  $u_n$ of \eqref{KSeqintfin} converges to the solution $U^\ast$ of \eqref{KSwhole}  in $C^1_{loc}(0,\infty)$ as $n \rightarrow \infty$. Although, the framework
originates from \cite{miya2}, our setting is different due to breaking of scaling, and dependence of $\lambda$ on $n$. For clarity of notation, we often drop the subscript $n$ of functions 
if the dependence is clear from the context. 

If 
$\hat{u}_n(\rho) = u_n(r,\gamma_n)-\gamma_n$ with $\rho=e^{\frac{\gamma_n}{2}}r$, then $\hat{u}(\cdot, \gamma)$ satisfies
\begin{equation}\label{defuhat}
\left\{
\begin{gathered} 
\hat{u}'' + \frac{N - 1}{r} \hat{u}' + \lambda_n e^{\hat{u}}- e^{-\gamma_n} (\hat{u} +\gamma_n) = 0 \qquad 
\text{ in } (0,\infty) \,, \\
\hat{u}(0)=\hat{u} ' (0)=0.
\end{gathered}
\right.
\end{equation}
Next, let $\bar{u}(r,\bar{\gamma})$ be the unique radial solution of
\begin{equation}
\label{defubar}
\left\{
\begin{gathered} 
\bar{u}'' + \frac{N - 1}{r} \bar{u}' + \lambda_\infty e^{\bar{u}}=0\qquad \text{ in } (0,\infty)\,,\\
\bar{u}(0)=0,\quad \bar{u} ' (0)=0\,.
\end{gathered}
\right.
\end{equation}
The existence of global solutions of \eqref{defuhat} and \eqref{defubar} is established in the proof of the following lemma.

\begin{lem}
\label{KSlemosc} 
For any $n > 0$ there exist unique solutions $\hat{u}_n$ and $\bar{u}$ of  \eqref{defuhat} and \eqref{defubar} respectively. Moreover, 
\begin{equation}\label{uhtub}
\hat{u}_n \rightarrow \bar{u} \text{ in $C^1_{loc}([0,\infty))$ as $\gamma \rightarrow \infty$.}
\end{equation}
\end{lem}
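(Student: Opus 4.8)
The plan is to view \eqref{defuhat} as a regular perturbation of \eqref{defubar}: both carry the initial data $u(0)=u'(0)=0$, the coefficients $\lambda_n$ converge to $\lambda_\infty>0$, and the extra term $e^{-\gamma_n}(\hat u+\gamma_n)$ is negligible once $\gamma_n$ is large (since $\gamma_n e^{-\gamma_n}\to 0$) as long as $\hat u$ stays bounded. First I would record local existence and uniqueness near $r=0$ for each problem. Writing an equation of the form $v''+\tfrac{N-1}{r}v'+f(v)=0$, $v(0)=v'(0)=0$, in the integral form $v(r)=-\int_0^r s^{1-N}\int_0^s t^{N-1}f(v(t))\,dt\,ds$ with a locally Lipschitz $f$, a contraction argument on a small interval $[0,r_0]$ — entirely analogous to the one used in the proof of Proposition \ref{uniq} — produces a unique $C^2$ solution with the prescribed data, and standard continuation extends it to a maximal interval. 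It is this local uniqueness that will be invoked at the very end.

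The crux is a priori bounds on $\hat u_n$ on an arbitrary compact $[0,R]$, uniform in $n$ for $n$ large. Set $g_n(s)=\lambda_n e^{s}-e^{-\gamma_n}(s+\gamma_n)$, so $(r^{N-1}\hat u_n')'=-r^{N-1}g_n(\hat u_n)$. Since $g_n(0)=\lambda_n-\gamma_n e^{-\gamma_n}>\lambda_\infty/2>0$ for $n$ large, integrating from $0$ gives $\hat u_n'<0$; hence $\hat u_n$ is decreasing and $\le 0$ as long as it stays in the range $[-\gamma_n/2,0]$, where $g_n>0$ (for $n$ large, because $\lambda_n e^{-\gamma_n/2}>\gamma_n e^{-\gamma_n}$ eventually, so $g_n(s)\ge\lambda_n e^{-\gamma_n/2}-\gamma_n e^{-\gamma_n}>0$ there). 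On that range $0\le\lambda_n e^{\hat u_n}\le\lambda_n$ and $|e^{-\gamma_n}(\hat u_n+\gamma_n)|\le e^{-\gamma_n}|\hat u_n|+\gamma_n e^{-\gamma_n}$, whence $|g_n(\hat u_n)|\le C_0+e^{-\gamma_n}|\hat u_n|$; integrating twice yields $\sup_{[0,R]}|\hat u_n|\le\tfrac{R^2}{2N}\bigl(C_0+e^{-\gamma_n}\sup_{[0,R]}|\hat u_n|\bigr)$, and absorbing the last term for $n$ large gives $\sup_{[0,R]}|\hat u_n|\le C_1(R)$. A standard continuity/bootstrap argument then shows the a priori restriction $\hat u_n\in[-\gamma_n/2,0]$ is never violated on $[0,R]$ for $n$ large, since $C_1(R)\ll\gamma_n/2$. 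The bound on $\hat u_n'$ follows by one integration, and, using the equation together with $|\hat u_n'(r)/r|\le C$ near $r=0$, so does a bound on $\hat u_n''$; in particular the solution of \eqref{defuhat} is global. The same (simpler) estimate — $\bar u$ is decreasing, hence $\le 0$, hence $e^{\bar u}\le 1$ — gives global existence of $\bar u$.

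Finally, for the convergence: with uniform $C^2$ bounds on every $[0,R]$, Arzel\`a--Ascoli extracts from any subsequence a further subsequence with $\hat u_{n_k}\to u_\ast$ in $C^1_{loc}([0,\infty))$. Since $\lambda_{n_k}\to\lambda_\infty$ and, by the uniform bound, $e^{-\gamma_{n_k}}(\hat u_{n_k}+\gamma_{n_k})\to 0$ uniformly on compacts, one may pass to the limit in the integral formulation to conclude that $u_\ast$ solves \eqref{defubar} with $u_\ast(0)=u_\ast'(0)=0$; by the local uniqueness from the first step, $u_\ast=\bar u$. As the limit does not depend on the extracted subsequence, the whole family converges, which is \eqref{uhtub}. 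The main obstacle is the a priori bound: a priori nothing prevents $e^{\hat u_n}$ from blowing up, and one must exploit the structure — the sign of $g_n$ at the initial value forces monotonicity, hence $\hat u_n\le 0$ — to keep the nonlinearity controlled uniformly in $n$ on compact intervals.
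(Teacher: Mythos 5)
Your proposal is correct in substance and its overall architecture coincides with the paper's: local existence/uniqueness from the locally Lipschitz nonlinearity, a priori bounds on compacts uniform in $n$, Arzel\`a--Ascoli plus passage to the limit in the equation, and identification of the limit with $\bar u$ via uniqueness for \eqref{defubar}, which upgrades subsequential to full convergence \eqref{uhtub}. Where you genuinely diverge is the mechanism behind the a priori bound. The paper uses no sign or monotonicity information: it introduces the decreasing energy $E_n(\rho)=\tfrac12(\hat u')^2-e^{-\gamma_n}\tfrac12\hat u^2+\lambda_n e^{\hat u}-e^{-\gamma_n}\gamma_n\hat u$, and from $E_n(0)=\lambda_n$, Young's inequality and Gronwall it gets $(\hat u^2)'\le C(1+\hat u^2)$, hence an exponential-in-$\rho$ bound valid for \emph{every} $\gamma>0$; this yields global existence for all $n$, which is what the lemma literally asserts. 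Your route instead exploits the sign of $g_n$ to force $\hat u_n'<0$, hence $\hat u_n\le 0$ and $e^{\hat u_n}\le 1$, which buys a cleaner quadratic-in-$r$ bound and the extra qualitative information $\hat u_n\le 0$ (which the authors themselves invoke later, e.g.\ in Lemma \ref{lguc}); but it is only available once $\gamma_n$ is large enough that $\lambda_n-\gamma_n e^{-\gamma_n}>0$ and $\lambda_n e^{-\gamma_n/2}>\gamma_n e^{-\gamma_n}$. For the convergence statement this restriction is harmless, but to cover existence and uniqueness of $\hat u_n$ for \emph{every} $n>0$ as stated you should supplement your argument for the finitely many small $n$ (for which $\lambda_n-\gamma_n e^{-\gamma_n}$ may be negative and monotonicity can fail), e.g.\ by the paper's energy/Gronwall estimate, which shows that $-\lambda_n e^{\hat u}$ forbids upward blow-up while the remaining terms are at most linear. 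Finally, note the paper actually extracts locally uniform bounds up to third derivatives and $C^2_{loc}$ compactness; your $C^2$ bounds are enough for the claimed $C^1_{loc}$ convergence.
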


\begin{proof}
Since the non-linearities are locally Lipschitz,  local existence and uniqueness of solution to \eqref{defuhat} and \eqref{defubar} 
follow from standard arguments for radial solutions. 
Also, if the solutions exist, then they are necessarily unique.
Next,  
define
$$
E_n(\rho)= \dfrac{(\hat{u}^\prime (\rho) )^2}{2}- e^{-\gamma_n}\dfrac{\hat{u}^2(\rho)}{2}+
\lambda_n e^{\hat{u}(\rho)} - e^{-\gamma_n}\gamma_n \hat{u}(\rho) \, .
$$
It is easy to check that $E_n$ is decreasing and  $E_n(0) = \lambda_n$ and since $(\lambda_n)$ converges, $|E_n(0)| \leq C$. 
Thus, since $\gamma > 0$ and $\gamma \mapsto e^{-\gamma}\gamma$ is bounded on $(0, \infty)$, Young inequality yields
\begin{equation}
((\hat{u})^2)'(\rho) \leq \hat{u}^2(\rho) + (\hat{u}')^2(\rho)  \leq E_n(\rho) + 
\hat{u}^2(\rho) 
\left( 
1 + \frac{e^{-\gamma_n}}{2} 
\right) + e^{-\gamma_n} \gamma_n \hat{u} (\rho) \leq \lambda_n + C( \hat{u}^2(\rho) + 1) \,,
\end{equation}
where $C$ is a universal constant. Then, Gronwall inequality implies
\begin{equation}
|\hat{u}(\rho)| \leq C_1 e^{C_2 \rho} \,,
\end{equation}
where $C_1, C_2$ are universal constants. Thus, $\hat{u}$ is a priori bounded, and therefore it can be uniquely extended to $[0, \infty)$. Also, since all coefficients are bounded by elliptic regularity, $\hat{u}$
 has bounded first, second, and third order derivatives locally on $[0, \infty)$,  
  uniformly in $n$. Then, by Arzel\`a-Ascoli theorem $\hat{u}_n \to \hat{u}_\infty$ in $C^2_{\textrm{loc}}[0, \infty)$. 
 Furthermore,  $e^{-\gamma_n} (\hat{u} (\rho) +\gamma_n) \to 0$ and 
 $\lambda_n e^{\hat{u} (\rho)} \to \lambda_\infty e^{\hat{u}_\infty (\rho)}$ as $n \to \infty$ locally uniformly in $\rho$. 
Thus $(\hat{u}_n)$
converges (up to sub-sequence) locally uniformly in $C^2 ([0, \infty))$ to a solution of \eqref{defubar}, and since such solution is unique, we obtain that $\bar{u} = \hat{u}_\infty$ is globally defined. Convergence 
\eqref{uhtub} follows.
\end{proof}

As in \eqref{defw}, we define $\zeta = \ln m - \ln r$ with $m = \sqrt{\frac{2(N-2)}{\lambda_\infty}}$ and we let 
$\eta (\zeta) = u(r)- 2\zeta$. Then $\eta$ satisfies (cf. \eqref{eq:fze})
\begin{equation}
\label{KSdefygamma}
\begin{cases}\eta''-(N-2)\eta' +2(N-2)( \frac{\lambda_n}{\lambda_\infty} e^{\eta}-1)-m^2 e^{-2\zeta}(\eta+2 \zeta)=0,\qquad -\infty<\zeta <\infty \,,\\ 
\lim_{\zeta \rightarrow \infty} (\eta (\zeta )+2\zeta)=\gamma_n,\\ 
\lim_{\zeta\rightarrow \infty} e^{\zeta}(\eta'(\zeta)+2)=0. \end{cases}
\end{equation}
For $\rho = e^{\frac{\gamma_n}{2}}r$
we set $\tau=\zeta-\gamma_n /2 = \ln m - \ln \rho$ and $\hat{\eta}(\tau):=\eta(\zeta)$. Observe that 
$\hat{\eta} = u(r) - 2\zeta = \hat{u}(\rho) + \gamma_n - 2\zeta = \hat{u}(\rho) - 2\tau$
is a transformed function corresponding to $\hat{u}$ solving \eqref{defuhat}. 
Standard computations show that
\begin{equation}
\label{KSdefhaty}
\begin{cases}\hat{\eta}''-(N-2)\hat{\eta}' +2(N-2)(\frac{\lambda_n}{\lambda_\infty} e^{\hat{\eta}}-1)-m^2 e^{-2\tau -\gamma_n}(\hat{\eta}+2\tau +\gamma_n) = 0,
\qquad  -\infty<\tau <\infty \,,\\ 
\lim_{\tau \rightarrow \infty} (\hat{\eta}(\tau )+ 2\tau )=0,\\ \lim_{\tau\rightarrow \infty} e^{\tau}(\hat{\eta}'(\tau)+2)=0. \end{cases}
\end{equation}
We also define   $\bar{\eta}(\tau ) = \bar{u} (\rho , \gamma) - 2\tau$, a transformed function of $\bar{u}$. Then $\bar{\eta}$ satisfies 
$$
\begin{cases}\bar{\eta}''-(N-2)\bar{\eta}' +2(N-2)(e^{\bar{\eta}}-1)=0,\ -\infty<\tau < \infty \,,\\
 \lim_{\tau \rightarrow \infty} (\bar{\eta}(\tau )+2\tau )=0,\\ \lim_{\tau\rightarrow \infty} e^{\tau}(\bar{\eta}(\tau)'-2)=0 .\end{cases}
$$
In the transformed variables,  Lemma \ref{KSlemosc} rewrites as

\begin{cor}
\label{corconvbranch}
We have
$$
\hat{\eta}_n(\cdot) \rightarrow \bar{\eta}(\cdot),\ in\ C_{loc}^1 ((-\infty ,\infty))\ \textrm{ as }\ n \rightarrow \infty .
$$
\end{cor}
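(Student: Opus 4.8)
The plan is to deduce Corollary \ref{corconvbranch} from Lemma \ref{KSlemosc} purely by the change of variables, since the statement is nothing but Lemma \ref{KSlemosc} read in the $(\tau,\eta)$-coordinates. The key point to keep track of is that the relation between $\tau$ and the rescaled radial variable $\rho = e^{\gamma_n/2} r$ does \emph{not} depend on $n$: indeed $\tau = \ln m - \ln \rho$ with $m = \sqrt{2(N-2)/\lambda_\infty}$, so that $\rho = m e^{-\tau}$ for every $n$. Consequently
\[
\hat{\eta}_n(\tau) = \hat{u}_n(m e^{-\tau}) - 2\tau, \qquad \bar{\eta}(\tau) = \bar{u}(m e^{-\tau}) - 2\tau \,,
\]
where $\hat{u}_n$ and $\bar{u}$ are the solutions of \eqref{defuhat} and \eqref{defubar} provided by Lemma \ref{KSlemosc}.

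Next I fix an arbitrary compact interval $[a,b] \subset (-\infty, \infty)$ in the $\tau$-variable; under $\tau \mapsto m e^{-\tau}$ it is mapped onto the compact interval $[m e^{-b}, m e^{-a}] \subset (0, \infty)$. By Lemma \ref{KSlemosc} the convergence $\hat{u}_n \to \bar{u}$ holds in $C^1_{loc}([0,\infty))$, hence uniformly together with first derivatives on $[m e^{-b}, m e^{-a}]$. Composing with the fixed smooth map $\tau \mapsto m e^{-\tau}$, whose derivative $-m e^{-\tau}$ is bounded on $[a,b]$, and using the chain rule
\[
\hat{\eta}_n'(\tau) = -m e^{-\tau}\, \hat{u}_n'(m e^{-\tau}) - 2 \,,
\]
I obtain $\hat{\eta}_n \to \bar{\eta}$ and $\hat{\eta}_n' \to \bar{\eta}'$ uniformly on $[a,b]$. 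Since $[a,b]$ was arbitrary, this is precisely the asserted convergence in $C^1_{loc}((-\infty,\infty))$.

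There is essentially no obstacle here: the only thing to be careful about is that the change of variables linking $\tau$ to $\rho$ uses $\lambda_\infty$, and hence the same $m$, for all $n$, which is what makes the composition with the already-established locally uniform convergence of Lemma \ref{KSlemosc} legitimate. (Had one instead normalized with $m_n = \sqrt{2(N-2)/\lambda_n}$, one would additionally invoke $\lambda_n \to \lambda_\infty$ together with the uniform local $C^1$ bounds on the $\hat{u}_n$ established in the proof of Lemma \ref{KSlemosc}; with the normalization chosen in the paper no such extra argument is needed.)
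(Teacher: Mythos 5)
Your proof is correct and follows essentially the same route as the paper: both arguments transfer the $C^1_{loc}$ convergence of Lemma \ref{KSlemosc} through the $n$-independent change of variables $\rho = m e^{-\tau}$ (with $m$ built from $\lambda_\infty$), noting that compact $\tau$-intervals map to compact subsets of $(0,\infty)$ and that the chain rule produces exactly the factor $\rho\,(\hat{u}_n'-\bar{u}')$ controlled by the lemma. Nothing further is needed.
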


\begin{proof}
For any compact 
$A \subset (-\infty, \infty)$, denote $B = \{\rho: \ln m - \ln \rho  \in A\}$ and observe that $B \subset (0, \infty)$ is bounded, 
compact, and independent of $\gamma$. Then, Lemma \ref{KSlemosc} implies that
\begin{equation}
\sup_{\tau \in A} |\hat{\eta}_n'(\tau) - \bar{\eta}'(\tau)| = 
\sup_{\tau \in A} |(\hat{\eta}_n(\tau) + 2 \tau)' - (\bar{\eta}(\tau) + 2 \tau)'| = 
\sup_{\rho \in B} |\rho(\hat{u}_n'(\rho) - \bar{u}'(\rho))| \to 0 \textrm{ as } n \to \infty \,.
\end{equation}
Analogously, we obtain  $\sup_{\tau \in A} |\hat{\eta}_n(\tau) - \bar{\eta}(\tau)| \to 0$ as $n \to \infty$ and the assertion follows. 
\end{proof}

Next, a standard calculation yields that  
$$
\bar{E}(\tau) = \frac{(\bar{\eta}'(\tau))^2}{2} + 2(N - 2) (e^{\bar{\eta}(\tau)} - \bar{\eta}(\tau) - 1) 
$$
is non-decreasing, and strictly increasing unless $\bar{\eta}'(\tau) = 0$.
Also, since $e^x - x - 1 \geq 0$, we obtain that $\bar{E}$ is bounded from below. 
A standard theory of Lyapunov functions implies that 
 $\bar{\eta}$ converges to a set of equilibria as $\tau \to -\infty$. 
Since $0$ is the only equilibrium, we have 
$(\bar{\eta}(\tau), \bar{\eta}' (\tau))\rightarrow (0,0)$ as $\tau \rightarrow -\infty$. 

Fix any $\tau_0$ and recall that $\eta_n(\zeta) = \hat{\eta}_n(\tau)$ with  $\tau = \zeta - \frac{\gamma_n}{2}$. 
Then, Corollary \ref{corconvbranch}  implies
\begin{equation}
\label{KS19sepe1}
\lim_{n \to \infty} \left(\eta_n \left(\tau_0 + \frac{\gamma_n}{2}\right),\eta_n'\left(\tau_0 + \frac{\gamma_n}{2}\right)\right) = 
\lim_{n \to \infty} \left(\hat{\eta}_n (\tau_0), \hat{\eta}_n (\tau_0)\right) = \left(\bar{\eta} (\tau_0), \bar{\eta} (\tau_0)\right) \,.
\end{equation}
Since $(\bar{\eta}(\tau), \bar{\eta}' (\tau))\rightarrow (0,0)$ as $\tau \rightarrow -\infty$, we have that the right hand side of \eqref{KS19sepe1} 
is arbitrary close to $(0, 0)$ if $\tau_0$ is large negative. 

In the following result we implicitly assume as above
that the functions depend on $n$.
Denote $z(\zeta)=\eta' (\zeta)$. Next, we show that there is $\zeta^* > 0$ independent of $n$ such that 
if $(\eta (\bar{\zeta}), z(\bar{\zeta}))$ is close to
 $(0,0)$ for some $\bar{\zeta} > \zeta^*$, then $(\eta(\zeta), \eta' (\zeta))$ is close to $(0,0)$ for any 
$ \zeta \in (\zeta^*, \bar{\zeta})$. Note that by \eqref{KS19sepe1}, $\bar{\zeta}$ is indeed large, since 
$\tau_0$ is fixed and $\gamma_n$ is large.

\begin{lem}
\label{KSlemstagamma}
For any $n > 0$ and $\varepsilon > 0$ denote 
$\Gamma_\varepsilon^n =\{(\eta,z)\in \R^2: 2(N-2)(\frac{\lambda_n}{\lambda_\infty} e^\eta -1-\eta)+\frac{1}{2}z^2 \leq \varepsilon\}$
and fix $\varepsilon_0 > 0$ such that $\Gamma_{2\varepsilon_0}^n \subset  \{\eta : |\eta|<1\}$. Note that since $\lambda_n \to 
\lambda_\infty$, $\varepsilon_0$
can be chosen idependently of $n$.
Fix $\varepsilon \in (0, \varepsilon_0)$ and 
let $\zeta^* \geq 2$ depending on $\varepsilon > 0$, but independent of $n$ be so large that 
\begin{equation}\label{pzst}
m^2\dfrac{  e^{-2\zeta}}{2} (1 + 2\zeta )^2 \leq \frac{\varepsilon}{2} \qquad \textrm{for any }  \zeta > \zeta^*\,.
\end{equation}
If there are $\bar{\zeta} > \zeta^*$ and $n > 0$
 such that $(\eta(\bar{\zeta}), \eta^\prime (\bar{\zeta})) \in \Gamma_\varepsilon^n$, then 
 $(\eta(\zeta), \eta^\prime (\zeta)) \in \Gamma_{2\varepsilon}^n$, for any $\zeta\in (\zeta^*, \bar{\zeta} )$.
\end{lem}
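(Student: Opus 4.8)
The plan is to run a trapping/Lyapunov argument for the second order equation in \eqref{KSdefygamma}. For a solution $\eta$ of that equation I would introduce the ``energy''
\begin{equation*}
W(\zeta):=2(N-2)\left(\frac{\lambda_n}{\lambda_\infty}e^{\eta(\zeta)}-1-\eta(\zeta)\right)+\frac12\big(\eta'(\zeta)\big)^2,
\end{equation*}
which is designed precisely so that $(\eta(\zeta),\eta'(\zeta))\in\Gamma_c^n$ if and only if $W(\zeta)\le c$. Differentiating $W$ and substituting $\eta''=(N-2)\eta'-2(N-2)(\frac{\lambda_n}{\lambda_\infty}e^{\eta}-1)+m^2e^{-2\zeta}(\eta+2\zeta)$ from \eqref{KSdefygamma}, the two terms carrying the factor $\frac{\lambda_n}{\lambda_\infty}e^{\eta}-1$ cancel, and one is left with
\begin{equation*}
W'(\zeta)=(N-2)\big(\eta'(\zeta)\big)^2+m^2e^{-2\zeta}\big(\eta(\zeta)+2\zeta\big)\eta'(\zeta).
\end{equation*}
Hence $W$ would be monotone along the trajectory were it not for the exponentially small non-autonomous term, and the task is to show that this term produces only an \emph{integrable} perturbation.

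To that end, on the region where $(\eta(\zeta),\eta'(\zeta))\in\Gamma_{2\varepsilon_0}^n$ one has $|\eta(\zeta)|<1$, hence $|\eta(\zeta)+2\zeta|\le1+2\zeta$; I would then apply Young's inequality to the cross term, with weight tuned so that the resulting $(\eta')^2$-term is exactly absorbed by the good term $(N-2)(\eta')^2$ of $W'$. This leaves
\begin{equation*}
W'(\zeta)\ \ge\ -h(\zeta),\qquad h(\zeta):=\frac{m^4e^{-4\zeta}(1+2\zeta)^2}{4(N-2)},
\end{equation*}
valid for $\zeta>\zeta^*$ as long as the trajectory stays in $\Gamma_{2\varepsilon_0}^n$. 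Now $h$ is integrable on $[\zeta^*,\infty)$: by \eqref{pzst} one has $h(\zeta)\le\frac{\varepsilon}{4(N-2)}m^2e^{-2\zeta}$, and using $\zeta^*\ge2$ and $N\ge3$ a direct estimate gives $\int_{\zeta^*}^{\infty}h(\zeta)\,d\zeta\le\varepsilon/2$.

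With this in hand, the conclusion follows by continuation from $\bar\zeta$ downwards. Let $\zeta_1$ be the infimum of those $\zeta\in[\zeta^*,\bar\zeta]$ for which the trajectory remains in $\Gamma_{2\varepsilon_0}^n$ on all of $[\zeta,\bar\zeta]$; this is well defined since, by hypothesis, $W(\bar\zeta)\le\varepsilon<2\varepsilon_0$. On $[\zeta_1,\bar\zeta]$ the bound $W'\ge-h$ applies, so for every $\zeta\in[\zeta_1,\bar\zeta]$
\begin{equation*}
W(\zeta)=W(\bar\zeta)-\int_{\zeta}^{\bar\zeta}W'(\sigma)\,d\sigma\ \le\ W(\bar\zeta)+\int_{\zeta^*}^{\infty}h(\sigma)\,d\sigma\ \le\ \varepsilon+\frac{\varepsilon}{2}\ =\ \frac{3\varepsilon}{2}\ <\ 2\varepsilon\le2\varepsilon_0.
\end{equation*}
Thus the trajectory stays strictly inside $\Gamma_{2\varepsilon_0}^n$ on $[\zeta_1,\bar\zeta]$, which by continuity forces $\zeta_1=\zeta^*$ (otherwise one could continue a little below $\zeta_1$, contradicting its minimality). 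Therefore $W(\zeta)\le\frac{3\varepsilon}{2}<2\varepsilon$ on $(\zeta^*,\bar\zeta)$, i.e. $(\eta(\zeta),\eta'(\zeta))\in\Gamma_{2\varepsilon}^n$ there, which is exactly the claim.

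I expect the only genuinely delicate step to be obtaining $W'\ge-h$ with $h\in L^1(\zeta^*,\infty)$. The pointwise inequality $W'\ge-\varepsilon/2$, which is what \eqref{pzst} yields most directly, is not enough, because the interval $(\zeta^*,\bar\zeta)$ can be arbitrarily long --- in the application $\bar\zeta$ is of order $\gamma_n/2\to\infty$ --- and integrating a constant lower bound over it gives nothing. The point is that the algebraic cancellation of the two $(\eta')^2$ contributions via Young's inequality replaces the $\zeta$-independent loss by the genuinely integrable tail $h$, and it is this, together with the smallness of $\int_{\zeta^*}^{\infty}h$ coming from \eqref{pzst}, that makes the trapping estimate close uniformly in $n$.
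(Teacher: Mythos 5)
Your proof is correct, and it is a close cousin of the paper's argument: both hinge on the same differential identity obtained by differentiating the energy that defines $\Gamma^n_\varepsilon$ along solutions of \eqref{KSdefygamma}, followed by a first-exit/continuation argument on $(\zeta^*,\bar\zeta)$. The difference lies in how the non-autonomous term is tamed. The paper works with the corrected energy $\tilde{E}_n(\eta,z,\zeta)=\frac{z^2}{2}+2(N-2)\bigl(\frac{\lambda_n}{\lambda_\infty}e^{\eta}-1-\eta\bigr)-\frac{m^2e^{-2\zeta}}{2}(\eta+2\zeta)^2$, whose derivative equals $(N-2)(\eta')^2+m^2e^{-2\zeta}(\eta+2\zeta)(\eta+2\zeta-2)$ and is therefore genuinely non-negative on the relevant region (this is where $\zeta^*\geq 2$ and $|\eta|\leq 1$ are used, giving $\eta+2\zeta\geq 3$); monotonicity of $\tilde E_n$ is then integrated between the exit time $T$ and $\bar\zeta$, and \eqref{pzst} enters only once, to convert the $\tilde E_n$-bound back into a bound on the quantity defining $\Gamma^n_{2\varepsilon}$ at $T$. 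You instead keep the plain energy $W$, absorb the cross term $m^2e^{-2\zeta}(\eta+2\zeta)\eta'$ into $(N-2)(\eta')^2$ by Young's inequality, and use that the remaining defect $h(\zeta)=\frac{m^4e^{-4\zeta}(1+2\zeta)^2}{4(N-2)}$ is integrable with $\int_{\zeta^*}^\infty h\leq \varepsilon/2$ (your "direct estimate" does implicitly use that $\varepsilon<\varepsilon_0$ is bounded in terms of $N-2$ through the requirement $\Gamma^n_{2\varepsilon_0}\subset\{|\eta|<1\}$, but that is immediate). Your closing remark is exactly the right diagnosis: a pointwise bound $W'\geq-\varepsilon/2$ would be useless since $\bar\zeta\sim\gamma_n/2$ is unbounded, and either the paper's sign observation or your integrable-defect bound is what makes the trapping uniform in $n$; the paper's version avoids the integrability discussion altogether at the price of carrying the correction term inside the energy, while yours keeps the energy identical to the set $\Gamma^n_\varepsilon$ being tracked, which makes the bookkeeping at the endpoints slightly cleaner.
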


\begin{proof}
Fix any $n > 0$.
We set
$$
\tilde{E}_n(\eta,z,\zeta)=\dfrac{z^2 }{2}+2(N-2)\left(\frac{\lambda_n}{\lambda_\infty} e^{\eta}-1 -\eta\right)
-
\dfrac{m^2 e^{-2\zeta}}{2} (\eta +2 \zeta)^2 .
$$
Since $\eta $ satisfies \eqref{KSdefygamma}, it is easy to check that
\begin{align}
\label{convbrancprope1}
\dfrac{ d \tilde{E}_n(\eta(\zeta),\eta'(\zeta),\zeta)}{d\zeta} &=\eta^\prime \left(\eta^{\prime \prime}
+2(N-2) \left(\frac{\lambda_n}{\lambda_\infty}e^\eta -1\right) - m^2 e^{-2\zeta} (\eta+2\zeta)\right)\nonumber\\
&\qquad  -2 m^2  e^{-2\zeta} (\eta +2\zeta) +m^2  e^{-2\zeta}(  \eta +2\zeta)^2  \nonumber\\
&= (N-2)(\eta^\prime )^2 +m^2 e^{-2\zeta} (\eta+2\zeta) ( \eta + 2\zeta -2   ).
\end{align}
Fix $\varepsilon \in (0, \varepsilon_0)$ 
and let $\bar{\zeta} > \zeta^*$ be as in the statement of the lemma. Since $\varepsilon < \varepsilon_0$, then
 $\Gamma_{2\varepsilon}^n \subset \{\eta : |\eta|<1\}$
and, by continuity, $\eta(\zeta) \in \Gamma^n_{2\varepsilon}$ for any $\zeta$ close to $\bar{\zeta}$. By contradiction assume that there is $T > \zeta^*$ such that 
$$
(\eta(\zeta),z(\zeta))\in \Gamma_{2\varepsilon}^n, 
\text{ for $\zeta\in ( T,\bar{\zeta})$ and $(\eta(T),z(T))\notin \Gamma_{2\varepsilon}^n$}.
$$
Integrating \eqref{convbrancprope1} between $T$ and $\bar{\zeta}$ and recalling that $|\eta(\zeta)|\leq 1$, for $\zeta\in (T,\bar{\zeta})$ and $T > \zeta^* \geq  2$, we find
\begin{align*}
\tilde{E}_n(\eta(\bar{\zeta}),z(\bar{\zeta}),\bar{\zeta})-\tilde{E}_n(\eta(T), z(T),T) &\geq m^2  \int_{T}^{\bar{\zeta}} e^{-2\zeta} (\eta(\zeta)+2\zeta ) (\eta (\zeta )+2\zeta -2  )d\zeta \geq 0\,.
\end{align*}
Then,  recalling that $(\eta(\bar{\zeta}),z(\bar{\zeta}))\in \Gamma_\varepsilon$, we deduce from the previous line  and 
\eqref{pzst} that 
\begin{align*}
\dfrac{(z  (T))^2}{2}+2(N-2)  (e^{\eta (T)}-1 -\eta (T)) &\leq \dfrac{(z (\bar{\zeta}))^2}{2}+2(N-2)(e^{\eta(\bar{\zeta})}-1 -\eta(\bar{\zeta}) +
m^2\dfrac{  e^{-2T}}{2} (\eta(T)+2T )^2 \\
&\leq \dfrac{3}{2} \varepsilon \,,
\end{align*}
a contradiction to the definition of $T$.
\end{proof}

Now, we prove the convergence of $u_n$ to $U^\ast$ when $n \rightarrow \infty$, which 
completes the proof of Theorem \ref{KSsingintro}. 

\begin{prop}
Let $U^\ast$ be the singular solution given by Theorem \ref{KSsingintro} (cf. Proposition \ref{uniq}). 
Then, 
$$ 
u_n \rightarrow U^\ast \qquad \textrm{as } \gamma \rightarrow \infty \quad \textrm{ in }\ C^1_{loc}((0,\infty )).
$$
\end{prop}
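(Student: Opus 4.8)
The plan is to pass to the logarithmic variables of \eqref{defw}, combine the matched-asymptotics information in Corollary \ref{corconvbranch} and Lemma \ref{KSlemstagamma} with a routine ODE compactness argument, and finally identify the limit through the uniqueness in Proposition \ref{uniq}. Throughout I write $\eta_n(\zeta)=u_n(r)-2\zeta$ with $r=me^{-\zeta}$, $m=\sqrt{2(N-2)/\lambda_\infty}$, so that $\eta_n$ solves \eqref{KSdefygamma}, and I denote by $\eta$ the function associated to $U^\ast=U^\ast_{\lambda_\infty}$.

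\textbf{Step 1 (uniform bounds near the origin).} First I would fix $\varepsilon\in(0,\varepsilon_0)$ and take $\zeta^\ast=\zeta^\ast(\varepsilon)$ as in Lemma \ref{KSlemstagamma}. Since $(\bar\eta(\tau),\bar\eta'(\tau))\to(0,0)$ as $\tau\to-\infty$, choose $\tau_0$ so negative that $(\bar\eta(\tau_0),\bar\eta'(\tau_0))$ lies well inside $\Gamma^n_{\varepsilon/2}$ for all large $n$ (using $\lambda_n\to\lambda_\infty$). By \eqref{KS19sepe1} (equivalently Corollary \ref{corconvbranch}), for $n$ large $(\eta_n(\tau_0+\gamma_n/2),\eta_n'(\tau_0+\gamma_n/2))\in\Gamma^n_\varepsilon$, and $\tau_0+\gamma_n/2>\zeta^\ast$ because $\gamma_n\to\infty$. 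Lemma \ref{KSlemstagamma} then gives $(\eta_n(\zeta),\eta_n'(\zeta))\in\Gamma^n_{2\varepsilon}$ for all $\zeta\in(\zeta^\ast,\tau_0+\gamma_n/2)$. As $\tau_0+\gamma_n/2\to\infty$, this yields: on any fixed compact subinterval of $(\zeta^\ast,\infty)$ one has, for $n$ large, $|\eta_n|,|\eta_n'|\le C(\varepsilon)$ with $C(\varepsilon)\to0$ as $\varepsilon\to0$; equivalently $u_n$ and $u_n'$ are bounded uniformly in $n$ on every compact subset of $(0,r^\ast)$, where $r^\ast=me^{-\zeta^\ast}$.

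\textbf{Step 2 (uniform bounds away from the origin and compactness).} Keeping $\varepsilon$ fixed, Step 1 evaluated at $\zeta=\zeta^\ast$ gives uniform-in-$n$ bounds on $u_n(r^\ast)$ and $u_n'(r^\ast)$. The functional $V_n(r)=\tfrac12\big((u_n')^2-u_n^2\big)+\lambda_n e^{u_n}$ is non-increasing along the equation (as in \eqref{dflf}--\eqref{lypf}), so $V_n(r)\le V_n(r^\ast)\le C$ for $r\ge r^\ast$, hence $(u_n')^2\le u_n^2+C$, and Gronwall's inequality bounds $u_n$, then $u_n'$, on every compact subset of $[r^\ast,\infty)$ uniformly in $n$. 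Differentiating \eqref{sectKS} and using that its coefficients are bounded on compacts of $(0,\infty)$, the second and third derivatives of $u_n$ are likewise bounded there. By Arzel\`a--Ascoli, along a subsequence $u_n\to u_\infty$ in $C^2_{loc}((0,\infty))$, and letting $n\to\infty$ in \eqref{KSeqintfin} (with $\lambda_n\to\lambda_\infty$) shows $u_\infty$ solves \eqref{sectKS} with $\lambda=\lambda_\infty$ on $(0,\infty)$.

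\textbf{Step 3 (identification and conclusion).} It remains to show $\eta_\infty(\zeta):=u_\infty(r)-2\zeta\to0$ as $\zeta\to\infty$, for then Proposition \ref{uniq} forces $u_\infty=U^\ast$. Given a fixed $\zeta$, pick $\varepsilon>0$ small enough that $\zeta>\zeta^\ast(\varepsilon)$; since $\tau_0+\gamma_n/2\to\infty$, the inclusion $(\eta_n(\zeta),\eta_n'(\zeta))\in\Gamma^n_{2\varepsilon}$ holds for $n$ large, and passing to the limit (using $\lambda_n/\lambda_\infty\to1$, so that $\Gamma^n_{2\varepsilon}$ converges to the set $\Gamma^\infty_{2\varepsilon}=\{(\eta,z): 2(N-2)(e^\eta-1-\eta)+\tfrac12 z^2\le 2\varepsilon\}$) gives $(\eta_\infty(\zeta),\eta_\infty'(\zeta))\in\Gamma^\infty_{2\varepsilon}$, hence $|\eta_\infty(\zeta)|\le C(\varepsilon)$ for all $\zeta>\zeta^\ast(\varepsilon)$. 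Letting $\varepsilon\to0$ gives $\eta_\infty(\zeta)\to0$, i.e. $u_\infty$ satisfies \eqref{ubbc} with $\lambda=\lambda_\infty$; by uniqueness $u_\infty=U^\ast$. Since every subsequence of $(u_n)$ has a further subsequence converging in $C^1_{loc}((0,\infty))$ to this same $U^\ast$, the whole sequence converges, proving the proposition. The hard part will be Step 1: one cannot invoke Corollary \ref{corconvbranch} directly at the base point $\tau_0+\gamma_n/2$ since it drifts to $+\infty$, so the delicate point is to chain it with the modified Lyapunov functional $\tilde E_n$ of Lemma \ref{KSlemstagamma} to transport smallness of $(\eta_n,\eta_n')$ from the matching scale $\zeta\approx\gamma_n/2$ down to the fixed threshold $\zeta^\ast$, with all constants independent of $n$.
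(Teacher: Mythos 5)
Your proposal is correct and follows essentially the same route as the paper: it uses \eqref{KS19sepe1} (Corollary \ref{corconvbranch}) to place $(\eta_n,\eta_n')$ in a small energy set at the matching scale $\tau_0+\gamma_n/2$, transports this smallness down to the fixed threshold $\zeta^*$ via Lemma \ref{KSlemstagamma}, extracts a limit by compactness, and identifies it through the uniqueness of Proposition \ref{uniq}. The only deviations are cosmetic — you pass to the limit in the inclusions directly instead of the paper's contradiction argument, you treat the region $r\geq r^*$ explicitly with the Lyapunov functional, and in Step 3 the phrase ``pick $\varepsilon$ small enough that $\zeta>\zeta^*(\varepsilon)$'' has the quantifiers backwards (since $\zeta^*(\varepsilon)\to\infty$ as $\varepsilon\to 0$), but the statement you actually use, namely $|\eta_\infty(\zeta)|\leq C(\varepsilon)$ for all $\zeta>\zeta^*(\varepsilon)$ followed by $\varepsilon\to 0$, is the correct one.
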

\begin{proof}
Fix  sequences $(\gamma_n)_{n \in \N}$ and $(\lambda_n)_{n \in \N}$ with $\gamma_n \nearrow \infty$ and 
$\lambda_n \to \lambda_\infty \in (0, \infty)$ as $n \to \infty$ and let $z_n = \eta_n'$ (see \eqref{KSdefygamma}). 
Fix any small $\varepsilon \in (0, \varepsilon_0)$, where $\varepsilon_0 > 0$ and $\zeta^*$ are as in Lemma \ref{KSlemstagamma}. 
Also denote
\begin{equation}
\Gamma^*_\varepsilon = \bigcap_{n \geq 0} \Gamma^n_\varepsilon, \qquad 
\Gamma'_\varepsilon = \bigcup_{n \geq 0} \Gamma^n_\varepsilon .
\end{equation}
Since $\lambda_n \to \lambda_\infty$, $\Gamma^*_\varepsilon$ and $\Gamma'_\varepsilon$ are non-empty bounded sets that approach to 
$\{(0, 0)\}$ as $\varepsilon \to 0^+$.

By \eqref{KS19sepe1}, there exists $\tau_0 < 0$ such that, for any sufficiently large $n$, one has $\zeta_n := \tau_0 + \frac{\gamma_n}{2} > \zeta^\ast$
and $(\eta_n(\zeta_n), z_n(\zeta_n)) \in \Gamma_\varepsilon^*$. Then, Lemma \ref{KSlemstagamma} implies that
 $(\eta_n(\zeta), z_n(\zeta)) \in \Gamma_{2\varepsilon}'$, for any $\zeta \in (\zeta^*, \zeta_n]$. 
 
Since $\eta$ satisfies \eqref{KSdefygamma},  we deduce that $\eta \in C^2 ((\zeta^*, \zeta_n])$ and, after differentiating 
\eqref{KSdefygamma} with respect to $\zeta$, we obtain $\eta \in C^3 ((\zeta^*, \zeta_n])$. 
Since $\zeta^*$ is independent of $n$ and $\zeta_n \to \infty$ as $n \to \infty$, we get, by Arzel\` a-Ascoli's theorem, that
$\lambda_n \to \lambda_\infty$ and 
a standard diagonal argument shows that 
$(\eta , z)$ converges (up to sub-sequence) to $(\eta_\ast, z_\ast)$ in $(C_{loc}^1 (T,\infty ))^2$, where $(\eta_\ast (\zeta),z_\ast (\zeta))$ satisfies
$$ 
\eta_\ast^{\prime \prime}  -(N-2) \eta_\ast^\prime +2(N-2) \eta_\ast =m^2 e^{-2\zeta }(\eta_\ast+2 \zeta) - 2(N-2) (e^{\eta_\ast} -1-\eta_\ast ) ,\ \zeta\in \R .
$$
In view of the uniqueness property established in Proposition \ref{uniq}, to finish the proof, we only need to show that 
\begin{equation}
\label{propfinsubsece1}
\eta_\ast (\zeta)\rightarrow 0 \textrm{ when } \zeta\rightarrow \infty.
\end{equation} 
Suppose by contradiction that  there exists a sequence $(\zeta_k')_{k \in \N}$ such that 
$\zeta_k' \rightarrow \infty$, as $k\rightarrow \infty$ and a constant $\delta>0$ such that 
\begin{equation}
\label{finsebcontr}
(\eta_\ast (\zeta_k'), z_\ast (\zeta_k')) \notin \Gamma_\delta', \textrm{ for any }  k\geq 1 \,.
\end{equation}
By decreasing $\varepsilon$ if necessary, we can suppose that $\varepsilon \leq \delta /4$. 
Choose $k$ sufficiently large such that $\zeta_k' > \zeta^*$, $\tau_0 < 0$ and sufficiently large $n$ such that  $\zeta_n > \zeta_k'$ and 
$(\eta(\zeta_n, \gamma_n ), z(\zeta_n, \gamma_n)) \in \Gamma_\varepsilon^*$ (cf. \eqref{KS19sepe1}). 
Then, by Lemma \ref{KSlemstagamma} one has 
$(\eta(\zeta, \gamma_n ), z(\zeta, \gamma_n)) \in \Gamma_{2 \varepsilon}' \subset \Gamma_{\delta}'$ 
for any $\zeta \in (\zeta^*, \zeta_n)$, a contradiction.

Overall, we proved that $\eta(\cdot, \gamma) \to \eta_\infty$ in $C^1_{\textrm{loc}}(\R)$, where $\eta(\cdot, \gamma)$ solves \eqref{KSdefygamma} and $\eta_\infty$ 
satisfies \eqref{eq:fze} with \eqref{KSuniqeq}. Finally, fix any open set $A$ with $\bar{A} \subset (0, \infty)$ and let $B := \{\zeta \in \R : \ln m - \ln r \in A\}$. Since $B$
is open and bounded, one has, for some constant $C_A$ depending on $A$, 
\begin{align*}
 \|u(\cdot, \gamma) - U^*(\cdot)\|_{C^1 (A)} &= \|(u(\cdot, \gamma) -2( \ln m - \ln \cdot)) - (U^*(\cdot ) - 2(\ln m - \ln \cdot))\|_{C^1 (A)} \\
&\leq  C_A \|\eta(\cdot, \gamma) - \eta_{\infty}(\cdot)\|_{C^1 (B)} \to 0 \qquad \textrm{as } \gamma \to 0 \,,
\end{align*}
as desired. 
\end{proof}

\section{Oscillation of the branch and Morse index: proof of Theorem \ref{KSoscbranch} and Proposition \ref{KSmorse}.}
\label{sec4}
To prove Theorem \ref{KSoscbranch},  we first recall a result of Joseph and Lundgren \cite{josephlundgren}. 
Let
\begin{equation}
\label{defubarast}
\bar{u}^\ast (r)= -2\log r +k, \qquad k=\log \dfrac{2(N-2)}{\lambda} 
\end{equation}
be the singular solution of \eqref{defubar}, that is, it satisfies the equation in \eqref{defubar} and blows-up at the origin. 

\begin{prop}
\label{gelfwellknown}
For any $\alpha \geq 0$, let $\bar{u}(\cdot,\alpha)$ (resp. $\bar{u}^\ast$) 
be defined in \eqref{defubar} (resp. \eqref{defubarast}).
Then,
$$
Z_{[0,\infty )}[\bar{u}(\cdot, \alpha) -\bar{u}^\ast (\cdot) ]=
\begin{cases} 
\infty &\textrm{if }  3\leq N\leq 9 \\ 
0 & \textrm{if }  N\geq 10, 
\end{cases}
$$
where $Z_I (u)=\sharp \{r\in I| u(r)=0\}$ and $\sharp A$ is the cardinality of the set $A$.
\end{prop}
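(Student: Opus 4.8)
The plan is to reduce the statement to the classical Joseph--Lundgren analysis of the Gelfand problem $\Delta u + \lambda e^u = 0$ by exploiting the scale invariance of \eqref{defubar}. First I would record the scaling: if $\bar u(\cdot,\alpha)$ solves \eqref{defubar} with $\bar u(0)=\alpha$, then $w(r) := \bar u(e^{\alpha/2} r, \alpha) - \alpha$ solves the same equation with $w(0)=0$, i.e. $w = \bar u(\cdot, 0)$; conversely $\bar u(r,\alpha) = \bar u(e^{\alpha/2} r, 0) + \alpha$. Likewise the singular solution transforms by $\bar u^\ast(e^{\alpha/2} r) = \bar u^\ast(r) - \alpha$. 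Hence the difference $\bar u(r,\alpha) - \bar u^\ast(r)$, after the change of variable $r \mapsto e^{\alpha/2} r$, equals $\bar u(r,0) - \bar u^\ast(r)$ up to the substitution; in particular the number of zeros $Z_{[0,\infty)}[\bar u(\cdot,\alpha) - \bar u^\ast]$ is independent of $\alpha$, so it suffices to treat a single value, say $\alpha=0$.

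Next I would pass to the logarithmic (Emden--Fowler type) variable that linearizes the singular solution. Writing $t = -\log r$ (so $r\to 0$ corresponds to $t\to\infty$) and setting $\bar u(r,0) = \bar v(t) + 2t - k$ with $k = \log\frac{2(N-2)}{\lambda}$ — this is exactly the substitution \eqref{defw}--\eqref{nrv} used elsewhere in the paper — the equation \eqref{defubar} becomes the autonomous ODE
\begin{equation*}
\bar v'' - (N-2)\bar v' + 2(N-2)(e^{\bar v} - 1) = 0 \,,
\end{equation*}
and $\bar u^\ast$ corresponds to the constant solution $\bar v \equiv 0$. Thus $Z_{[0,\infty)}[\bar u(\cdot,0) - \bar u^\ast]$ equals the number of times the trajectory $(\bar v(t),\bar v'(t))$ crosses $\bar v = 0$. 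Linearizing at the rest point $(0,0)$ gives characteristic roots $\frac{N-2}{2} \pm \sqrt{\frac{(N-2)(N-10)}{4}}$ (cf. \eqref{dfab}), which are complex if $3 \le N \le 9$ and real positive if $N \ge 10$. Because the full nonlinear flow possesses the strictly monotone Lyapunov function $\bar E(t) = \frac{(\bar v')^2}{2} + 2(N-2)(e^{\bar v} - \bar v - 1)$ (the autonomous analogue of the energy used in Lemma \ref{KSlemosc} and afterwards), which is coercive and decreasing along the flow when read toward the rest point, the $\omega$-limit of the relevant trajectory as $t \to \infty$ is exactly $\{(0,0)\}$. Standard phase-plane/Lyapunov arguments then give that in the focus case the trajectory spirals into the origin and hence crosses $\bar v = 0$ infinitely often, while in the node case it approaches along an eigendirection and, together with the fact that the ``regular'' trajectory issued from $t \to -\infty$ stays on one side (which one checks from the initial condition $\bar u(0,0) = 0 < k$, equivalently $\bar v \to -\infty$ as $t \to -\infty$), it crosses $\bar v = 0$ at most — in fact exactly — zero times. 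Translating back via $r = e^{-t}$ yields the claim for $\alpha = 0$, and by the scaling reduction of the first step, for all $\alpha \ge 0$.

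Alternatively, and more in keeping with ``recall a result of Joseph and Lundgren'', I would simply cite \cite{josephlundgren} (or \cite[Chapter 2]{MR2779463}) directly: the quoted dichotomy is precisely the classical statement that the bifurcation curve of the Gelfand problem on the ball oscillates about $\lambda^\ast = 2(N-2)$ when $3 \le N \le 9$ and is monotone when $N \ge 10$, and the number of oscillations is counted by the number of intersections of the regular solution with the singular one $\bar u^\ast = -2\log r + k$. In that case the only thing to add is the scaling remark showing $\alpha$-independence and a sentence identifying our $\bar u^\ast$ with theirs. The main obstacle is not the phase-plane analysis itself but making the node case $N \ge 10$ rigorous: one must rule out even a single crossing, which requires knowing that the regular trajectory, which leaves a neighborhood of $(0,0)$ as $t$ decreases, never returns to $\bar v = 0$ — this follows from monotonicity of $\bar v(t)$ (no interior critical points of $\bar u(\cdot,0)$, since $\bar u$ is radially decreasing by the maximum principle applied to $\Delta \bar u = -\lambda e^{\bar u} < 0$) combined with the sign of the limit as $t \to -\infty$; I would present this monotonicity observation carefully, as it is the crux of the ``$N \ge 10$'' half.
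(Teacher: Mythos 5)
The paper offers no proof of Proposition \ref{gelfwellknown}: it is explicitly \emph{recalled} from Joseph and Lundgren \cite{josephlundgren} (see also \cite[Chapter 2]{MR2779463}). So your second, citation-based route, together with the scaling remark showing that the zero count is independent of $\alpha$, coincides with what the paper does, and that part is fine; the identities $\bar u(r,\alpha)=\bar u(e^{\alpha/2}r,0)+\alpha$ and $\bar u^\ast(e^{\alpha/2}r)=\bar u^\ast(r)-\alpha$ are exactly the scale invariance the paper itself invokes in the lemma that follows the proposition.

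Two caveats about your self-contained phase-plane sketch. First, the orientation is mixed up: with $t=-\log r$ the substitution should read $\bar u=\bar v+2t+k$ (not $-k$), and for the regular solution one has $\bar v\to-\infty$ as $t\to+\infty$ (that is, as $r\to 0$, where $\bar u$ stays bounded while $\bar u^\ast$ blows up), whereas the convergence $(\bar v,\bar v')\to(0,0)$ happens as $t\to-\infty$ (that is, as $r\to\infty$); equivalently the energy $\bar E$ increases in $t$, so the equilibrium is reached backward in $t$ and the intersections occur at large $r$, not near the origin. This is fixable, but your statement about the $\omega$-limit as $t\to\infty$ and about the trajectory ``issued from $t\to-\infty$'' is the reverse of what actually holds (the paper records the correct direction right after Corollary \ref{corconvbranch}). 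Second, and more seriously, your proposed crux for $N\ge 10$ does not work as stated: radial monotonicity of $\bar u(\cdot,0)$ gives no information on the sign of $\bar u-\bar u^\ast$, because $\bar u^\ast$ is also strictly decreasing, and two decreasing functions can cross; monotonicity of $\bar u$ is not monotonicity of $\bar v$. Ruling out even one crossing when $N\ge 10$ is precisely the nontrivial content of the Joseph--Lundgren node-case analysis: one must show the orbit remains in $\{\bar v<0\}$, e.g. by an invariant-sector/separatrix argument at the node or a comparison with the linearization using $2(N-2)\le (N-2)^2/4$ for $N\ge 10$. If you keep the citation route, as the paper does, none of this is needed; if you want the argument self-contained, that step has to be supplied, and the maximum-principle observation you propose will not do it.
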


For any given $\gamma > 0$, let $(r_{\lambda,\gamma}^i)$ be an increasing (finite or infinite) sequence of positive real numbers such that $u'(r_{\lambda,\gamma}^i , \gamma) =0$, where $u(\cdot, \gamma) = u_\lambda (\cdot, \gamma)$ is the unique solution of 
\eqref{KSeqintfin}. 
We show that if $3\leq N \leq 9$, then $r_{\lambda,\gamma }^i $ oscillates around $R_\lambda^i$ (recall that $(U^*)'(R_\lambda^i) = 0$) infinitely many times as $\gamma \rightarrow \infty$.

The following main result of this section is partly motivated by \cite[Lemma 5]{miya2}, where a problem with Dirichlet boundary
conditions is considered (see also \cite{miya1} for a related problem with Neumann boundary condition). However, in the works above, 
it is assumed that the parameter can be removed from the equation by rescaling of the domain. 
Our situation is different and we have to work directly with parameter dependent equation. We also have to appropriately modify
the zero number argument to treat Neumann boundary conditions. 

\begin{lem}
Assume $3 \leq N \leq 9$ and fix $R > 0$. 
If $\lambda^i$ be the positive real number given in Theorem \ref{KSmainthm}, then there exist a sequence of initial data $(\gamma_n)_n$ with $\gamma_n \to \infty$ and a sequence positive integer $(j_n)_n$
 such that $r_{\lambda^i,\gamma_n}^{j_n} = R$. In other words, $u(\cdot, \gamma_n)$ satisfies Neumann boundary data on $\partial B_R$. 
\end{lem}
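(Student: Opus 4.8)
The plan is to reduce the claim to exhibiting a sequence $\gamma_n\to\infty$ with $u'(R,\gamma_n)=0$; once this holds, $u(\cdot,\gamma_n)$ is nonconstant, so its critical points are simple and discrete and $R=r^{j_n}_{\lambda^i,\gamma_n}$ with $j_n$ the number of critical points of $u(\cdot,\gamma_n)$ in $(0,R]$. Since $\lambda=\lambda^i$ is fixed, $u(\cdot,\gamma)$ and $U^\ast:=U^\ast_{\lambda^i}$ solve the \emph{same} equation, so writing $w:=u(\cdot,\gamma)-U^\ast$ and $e^{u}-e^{U^\ast}=a(r,\gamma)\,w$ with $a(r,\gamma):=\int_0^1 e^{(1-s)U^\ast(r)+s\,u(r,\gamma)}\,ds>0$, the function $w$ solves the \emph{linear} second order equation
\[
w''+\frac{N-1}{r}\,w'+\big(\lambda^i a(r,\gamma)-1\big)\,w=0\qquad\text{on }(0,R]\,.
\]
Hence the zeros of $w(\cdot,\gamma)$ are simple, and $w(\cdot,\gamma)\not\equiv 0$ since $u(0,\gamma)=\gamma<\infty$ while $U^\ast$ blows up at $0$. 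From \eqref{KSwhole}, Lemma \ref{decaderu} and the smoothness of $u(\cdot,\gamma)$ at the origin one gets $w(r)=2\ln r+O(1)$ and $w'(r)=2/r+o(r^{-1})$ as $r\to0^+$, hence $w(r)/w'(r)\sim r\ln r\to 0^-$ and the coefficient satisfies $\lambda^i a(r,\gamma)-1\sim \frac{N-2}{r^2|\ln r|}$ near $0$; in particular it is continuous on $(0,R]$, bounded on $[\delta_0,R]$ uniformly in $\gamma$ for each fixed $\delta_0>0$ (by the $C^1_{loc}$ convergence $u(\cdot,\gamma)\to U^\ast$ of Theorem \ref{KSsingintro}), and mild enough at $0$ that, after substituting $t=-\ln r$, the equation reads $\ddot w-(N-2)\dot w+o(1)\,w=0$, which is asymptotically non-oscillatory; thus $w(\cdot,\gamma)$ has only finitely many zeros in $(0,R)$.

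The technical heart is to prove that $u(\cdot,\gamma)$ crosses $U^\ast$ arbitrarily many times near the origin:
\[
M(\gamma):=Z_{(0,\delta_0)}\big[w(\cdot,\gamma)\big]\longrightarrow\infty\qquad\text{as }\gamma\to\infty
\]
for some (hence any) small fixed $\delta_0>0$. The mechanism, in the spirit of the introduction and partly motivated by \cite{miya2}, is that for $u$ large the zero order term is negligible against $\lambda^i e^u$, so near the origin $u(\cdot,\gamma)$ is close to the solution $\bar u(\cdot,\alpha)$ of the scale invariant problem \eqref{ruva} (with $\alpha$ tied to $\gamma$) while $U^\ast$ is close to its singular solution $\bar u^\ast$ (see \eqref{defubarast}). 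Since $3\le N\le 9$, Proposition \ref{gelfwellknown} gives $Z_{[0,\infty)}[\bar u(\cdot,\alpha)-\bar u^\ast]=\infty$, and by scale invariance $\bar u(r,\alpha)-\bar u^\ast(r)$ is a fixed profile evaluated at $e^{\alpha/2}r$, so its transversal zeros accumulate at $r=0$ as $\alpha\to\infty$; hence $Z_{(0,\delta_0)}[\bar u(\cdot,\alpha)-\bar u^\ast]\to\infty$. To transfer this count to $w(\cdot,\gamma)$, precise a priori estimates are needed that control \emph{both} the closeness of $u(\cdot,\gamma)$ to the Gelfand profile near the origin \emph{and} the length of the interval on which this approximation is valid, so that each of those transversal zeros in $(0,\delta_0)$ persists as a (necessarily transversal) zero of $w(\cdot,\gamma)$ for $\gamma$ large. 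This is the step I expect to be the main obstacle, since the lack of scaling for \eqref{eq:intro_v_lambda} forces a careful bookkeeping of error terms and of the crossover radius.

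Granting the above, one finishes by a zero-number argument, which it is convenient to phrase through the Pr\"ufer angle $\theta(\cdot,\gamma)$ of $(w,w')$: it is well defined and jointly continuous in $(r,\gamma)$ on $(0,R]$ because $(w,w')$ never vanishes there, and by the near-origin behaviour of $w$ one may pin down the continuous branch by requiring $\lim_{r\to0^+}\theta(r,\gamma)=0$. At each zero of $w$ one has $\theta'=\cos^2\theta=1>0$, so $\theta$ increases strictly through every value in $\pi\Z$; since $w(\cdot,\gamma)$ has exactly $M(\gamma)$ zeros in $(0,\delta_0)$, this forces $\theta(\delta_0,\gamma)\in\big[(M(\gamma)-1)\pi,\,M(\gamma)\pi\big]$, so $\theta(\delta_0,\gamma)\to\infty$ as $\gamma\to\infty$. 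On the fixed interval $[\delta_0,R]$ the coefficient of the linear equation is bounded uniformly in $\gamma$, hence $|\theta(R,\gamma)-\theta(\delta_0,\gamma)|\le C$ with $C$ independent of $\gamma$, and therefore $\theta(R,\gamma)\to\infty$. Being continuous in $\gamma$ and bounded on bounded $\gamma$-intervals, $\gamma\mapsto\theta(R,\gamma)$ attains, for every large integer $\ell$, the value $\frac{\pi}{2}+\ell\pi$ at some $\gamma_\ell$, and necessarily $\gamma_\ell\to\infty$. For such $\gamma_\ell$ one has $\cos\theta(R,\gamma_\ell)=0$, i.e. $w'(R,\gamma_\ell)=u'(R,\gamma_\ell)-(U^\ast)'(R)=u'(R,\gamma_\ell)=0$, which is the sought sequence.
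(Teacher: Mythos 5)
There is a genuine gap, and you identify it yourself: the whole argument hinges on $M(\gamma)=Z_{(0,\delta_0)}[u(\cdot,\gamma)-U^\ast]\to\infty$, but you only describe the mechanism and then declare the transfer of the zero count from the Gelfand problem to be ``the main obstacle'', leaving it unproved. This is exactly the step the paper carries out, and the point you are missing is that no new a priori estimates are needed: the required uniform control is already available from Section \ref{secconv}. Work in the blown-up variable $\rho=e^{\gamma/2}r$ and set $\hat u(\rho,\gamma)=u(r,\gamma)-\gamma$, $\hat U^\ast(\rho,\gamma)=U^\ast(r)-\gamma$. Lemma \ref{KSlemosc} gives $\hat u(\cdot,\gamma)\to\bar u(\cdot,0)$ in $C^1_{loc}([0,\infty))$, while the asymptotics at the origin in \eqref{KSwhole} give $|\hat U^\ast(\rho,\gamma)+2\ln\rho-\ln\tfrac{2(N-2)}{\lambda}|\le C\rho\,e^{-\gamma/2}$ on $\rho\le r_0e^{\gamma/2}$, hence $\hat U^\ast(\cdot,\gamma)\to\bar u^\ast$ in $C_{loc}((0,\infty))$. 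By Proposition \ref{gelfwellknown} and the scale invariance identity $\bar u(r,\gamma)-\bar u^\ast(r)=\bar u(e^{(\gamma-1)/2}r,1)-\bar u^\ast(e^{(\gamma-1)/2}r)$, for every $M$ there is a fixed compact window $I_M$ containing at least $M$ (simple, since both functions solve ODEs) zeros of $\bar u(\cdot,\gamma)-\bar u^\ast$ for all $\gamma\ge 1$; simplicity plus the two locally uniform convergences preserve these zeros for $\hat u-\hat U^\ast$, and finally $I_M\subset e^{\gamma/2}(0,\delta_0)$ for large $\gamma$, which yields $M(\gamma)\to\infty$. Without making this explicit your proposal is incomplete, since everything downstream (the divergence of the angle at $r=R$) relies on it; with it, your counting step coincides with the paper's.

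Your endgame is a genuinely different packaging of the paper's zero-number argument: the paper tracks the simple zeros of $w=u(\cdot,\gamma)-U^\ast$ directly, observing that zeros cannot be created in the interior nor enter through $r=0$ (because $w(0^+)=-\infty$), so as their number grows they must enter through $r=R$, and the alternating sign of $w'(R)$ at these events gives, via the intermediate value theorem in $\gamma$, parameters with $w'(R)=0=u'(R,\gamma)$ (using $(U^\ast)'(R)=0$ from Theorem \ref{KSmainthm}). Your Pr\"ufer-angle version is workable and arguably cleaner for the final step, but the continuity of $\gamma\mapsto\theta(R,\gamma)$ for the branch pinned at the singular endpoint $r=0^+$ is not automatic: you must show that, locally uniformly in $\gamma$, $\theta(r,\gamma)\to0$ as $r\to0^+$ and that the number of zeros of $w$ in $(0,r_1)$ is bounded for $\gamma$ in compact sets (i.e.\ no winding sneaks in through the origin as $\gamma$ varies) -- this is precisely the ``no zeros enter through $r=0$'' fact the paper uses, and it deserves a sentence rather than the bare assertion ``being continuous in $\gamma$''; the ingredients (the uniform $2\ln r$ behaviour of $w$, Lemma \ref{decaderu}, and continuous dependence on $\gamma$) are at hand. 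The remaining details of your scheme (simplicity of zeros of $w$, uniform bound of the coefficient on $[\delta_0,R]$ via the $C^1_{loc}$ convergence of Theorem \ref{KSsingintro}, and $\cos\theta(R)=0\iff u'(R)=0$) are correct.
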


\begin{proof}
First, for any $\lambda > 0$ we show that, for any $A > 0$ and $I = (0, A)$, one has
\begin{equation}\label{znum}
Z_{I}[u (\cdot, \gamma ) - U^\ast (\cdot)]\rightarrow \infty \textrm{ as } \gamma \rightarrow \infty 
\,.
\end{equation}
Recall that, by Lemma \ref{KSlemosc}, we have
\begin{equation}
\label{KSlemosce1}
\hat{u}(\rho , \gamma )\rightarrow \bar{u}(\rho , 0) \text{ in $C^1_{loc}([0,\infty ))$ when $\gamma\rightarrow \infty$} \,,
\end{equation}
where $\rho = e^{\frac{\gamma}{2}}r$,  $\bar{u} (\rho, 0) = \bar{u}(r, \gamma) - \gamma$ satisfies \eqref{defubar} and $\hat{u}(\rho, \gamma) = u(r, \gamma) - \gamma$  satisfies \eqref{defuhat}.  
Set $\hat{U}^\ast (\rho ,\gamma ) = U^\ast (r) -\gamma$ and $k = \ln \frac{2(N-2)}{\lambda}$. The condition on 
$U^*$ at the origin yields that, for any $r_0 > 0$, there is $C = C(r_0)$ such that 
\begin{equation}
 |U^*(r) + 2\ln r - k | \leq C r \qquad \textrm{for any } r \leq r_0\,,
\end{equation}
and therefore
\begin{equation}
|\hat{U}^\ast (\rho , \gamma) + 2 \ln \rho -  k| \leq C \rho e^{-\frac{\gamma}{2}} \qquad \textrm{for any } \rho \leq r_0 e^{\gamma/2} \,.
\end{equation}
Consequently, 
\begin{equation}
\label{KSlemosce2}
\hat{U}^\ast (\cdot ,\gamma )  \rightarrow \bar{u}^\ast  \textrm{ in } C_{loc} ((0,\infty )) \textrm{ as } \gamma \rightarrow \infty, 
\end{equation}
where $\bar{u}^\ast$ is defined in \eqref{defubarast}. Fix any $M > 0$. Then, by Proposition \ref{gelfwellknown}, there exists a bounded interval $I_M \subset (0, \infty)$  such that 
\begin{equation}
Z_{I_M} [\bar{u}(\cdot, 1) - \bar{u}^*(\cdot)] \geq M.  
\end{equation}
By scale invariance of the equation, one has $\bar{u}^*(r) = \bar{u}^*(e^{\alpha/2}r) + \alpha$ and $\bar{u}(r, 1 + \alpha) 
= \bar{u}(e^{\alpha/2}r, 1) + \alpha$, and therefore, for any $\gamma \geq 1$,
\begin{equation}
Z_{I_M} [\bar{u}(\cdot, \gamma) - \bar{u}^*(\cdot)] = 
Z_{e^{\frac{\gamma - 1}{2}}I_M} [\bar{u}(\cdot, 1) - \bar{u}^*(\cdot)] \geq M.  
\end{equation}
Then, thanks to \eqref{KSlemosce1} and \eqref{KSlemosce2}, we have
$$
Z_{I_M} [\hat{u}( \cdot, \gamma ) - \hat{U}^\ast (\cdot, \gamma ) ]
\geq Z_{I_M} [\bar{u}(\cdot, \gamma) - \bar{u}^*(\cdot)] \geq M  \,.
$$
Finally, for given $I$ and sufficiently large $\gamma$ one has $I_M \subset e^{\frac{\gamma}{2}} I$, and consequently
$$
Z_{I} [u( \cdot, \gamma ) - U^\ast ( \cdot, \gamma ) ] 
= Z_{e^{\frac{\gamma}{2}} I} [\hat{u}( \cdot, \gamma ) - \hat{U}^\ast (\cdot, \gamma ) ]
\geq M \,.
$$
Since $M$ was arbitrary, the claim \eqref{znum} follows. 

For  $\lambda := \lambda^i$, let $U^\ast$ be the solution of \eqref{KSwhole} and notice that $(U^*)'(R) = 0$. Also, for the same
$\lambda$, let $u(\cdot, \gamma)$ be the solution of \eqref{KSeqintfin}. Observe that $u(\cdot, \gamma)$ does not necessarily 
satisfy Neumann boundary condition at $R$. Since $w_\gamma := u(\cdot, \gamma) - U^*$ satisfies a linear differential equation, 
it follows from the uniqueness of initial value problem for ODEs that every zero of $w_\gamma$ is simple. 

Observe that, for every $\gamma > 0$, $Z_{[0, 1]} (w_\gamma) < \infty$ since otherwise by continuity, the accumulation point would be 
a degenerate zero. Also, since $w_\lambda$ has only finitely many simple zeros, continuous dependence on parameters yields that 
zeros of $w_\gamma$ depend continuously on $\gamma$. For each $\gamma > 0$, let $m_\gamma := Z_{[0, 1]} (w_\gamma)$ and let 
$(z_j^\gamma)_{j = 1}^{m_\gamma} \subset[0, R]$ be the increasing sequence of zeros of $w_\gamma$. 
Since $w_\gamma (0) = -\infty$, we have that $z_1^\gamma > 0$
 for each $\gamma > 0$, and moreover $w'_\gamma(z_1^\gamma) > 0$.
By induction it is easy to prove that $w'_\gamma(z_i^\gamma) > 0$ if $i$ is odd and $w'_\gamma(z_i^\gamma) < 0$ if $i$ is even. 

Since the zeros of $w_\gamma$ are non-degenerate a new zero of $w_\gamma$ cannot be created in the interior of $[0, R]$. Furthermore,  
$w_\gamma(0) = - \infty$, and therefore a new zero cannot enter $[0, R]$ through $0$.  Hence, \eqref{znum} yields that there exists a sequence $(\gamma_k)$ with $\gamma_k \to \infty$
as $k \to \infty$ such that $w_{\gamma_k}(R) = 0$. Since $w'_{\gamma_k} (R) > 0$ if $k$ is odd and $w'_{\gamma_k} (R) < 0$ if $k$ is even, 
by the continuous dependence on parameters, we obtain that there exists $\gamma^*_k \in (\gamma_k, \gamma_{k + 1})$ such that 
$w'_{\gamma_k^*} (R) = 0$. Since $(U^*)'(R) = 0$, we infer that $u'(R, \gamma^*_k) = 0$ and the lemma follows.
\end{proof}

Next, we prove that the Morse index of the singular solution $U^\ast_\lambda$  is finite when $N> 10$ and infinite when $3\leq N\leq 9$.

\begin{proof}[Proof of Proposition \ref{KSmorse}.]
Assume $3 \leq N \leq 9$. In order to prove that $U^*_{\lambda^i}$ has infinite Morse index, by variational characterization of eigenvalues, it suffices to prove that 
there are infinitely many linearly independent functions $f : (0, 1) \to \R$ such that 
\begin{equation}\label{dffj}
\mathcal{J}(f) = \int_0^1 \left(|f'|^2  + (1- \lambda^i e^{U^*_{\lambda^i}}) f^2\right) r^{N - 1} dr < 0 \,.
\end{equation}
By the boundary conditions \eqref{KSwhole},  we see that, for any $\varepsilon>0$, there exists $r_0$ such that, for all $r\in (0,r_0)$,
$$
\lambda^i e^{U_{\lambda^i}^\ast (r)}-1 \geq \dfrac{2(N-2)}{r^2}(1-\varepsilon).
$$
Then, it follows that if $3\leq N\leq 9$, we have, for some small $\varepsilon_0>0$,
\begin{equation}\label{upbds}
\lambda^i e^{U_{\lambda^i}^\ast (r)}-1 \geq  \left(\dfrac{(N-2)^2}{4}+\varepsilon_0^2\right)\dfrac{1}{r^2}.
\end{equation}
Next, we define $f_j (r)=f(r) \tilde{\chi}_j (r)$, where 
$$
\tilde{\chi}_j (r)=
\begin{cases}1,& \text{ if $r\in [r_{j+1},r_j]$,}\\ 
0, & \text{ elsewhere } ,
\end{cases}
\qquad 
r_j=e^{-2\pi j/\varepsilon_0}
$$
and $f(r)=r^{-(N-2)/2} \sin (\varepsilon_0 \log r /2)$. Notice that $f_j$ and $f_k$ have disjoint supports for 
$j \neq k$, and therefore they are linearly independent. Moreover, $f_j$  is a solution of 
$$
- f_j'' - \frac{N - 1}{r} f_j' -  \left(\dfrac{(N-2)^2}{4}+\dfrac{\varepsilon_0^2}{4}\right) \dfrac{1}{r^2} f_j=0,\quad r\in (r_{j+1},r_j).
$$
Since $f_j(r_j) = f_j(r_{j + 1}) = 0$ we have that $f_j \in W^{1, 2}((0, \infty))$ and by \eqref{upbds}
\begin{align}
\mathcal{J}(f_j) = 
\int_{r_{j+1}}^{r_j} \left(|f_j'|^2 -  \left(\dfrac{(N-2)^2}{4}+\varepsilon_0^2\right)\dfrac{1}{r^2} f_j^2\right)r^{N-1} dr = -\dfrac{3}{4}\varepsilon_0^2 \int_{r_{j+1}}^{r_j} \dfrac{1}{r^2} f_j^2 dx <0 \, .
\end{align}
Thus the Morse index of $U^*_{\lambda_i}$ is infinite.

Next, let us consider the case $N> 10$. We show that there are at most finitely many linearly independent functions satisfying \eqref{dffj}.
Recall that $(U^\ast_{\lambda_i} )^\prime (R)=0$. Again, by using asymptotics of $U^*_{\lambda^i}$ at the origin, we have 
that, for $\varepsilon >0 $,  there exists $r_0\in (0,1)$ such that, for any $r\in (0,r_0)$,
\begin{equation}\label{uubb}
\lambda^i e^{U_{\lambda^i}^\ast (r)}-1 \leq \dfrac{2(N-2)}{r^2}(1+\varepsilon)\leq  \dfrac{(N-2)^2}{4 r^2} \,,
\end{equation}
where the last inequality holds for $N > 10$. 
Next, choose  $\chi_0 \in C^1 (\R^N)$ such that 
$$
\chi_0 (r)=
\begin{cases}1, & \text{ if $r\in (0, r_0 /2)$},\\ 0,& \text{ if $r> r_0 $,}
\end{cases}
$$
and set $\chi_1 =1-\chi_0$. For $\phi \in H^1_{rad}(B_1 (0))$ with $\phi '(R)=0$, the Hardy inequality 
\cite{Hardy} and \eqref{uubb} imply
\begin{align*}
\mathcal{J}(\phi) &= \int_{0}^1 (|\phi' |^2 -(\chi_0 +\chi_1) (\lambda^i e^{U_{\lambda^i}^\ast (r)}-1) \phi^2 )r^{N - 1} dr\\
&\geq  \int_{0}^1 \left(|\phi' |^2  - \chi_0 \dfrac{(N-2)^2}{4 r^2}  \phi^2 \right)r^{N-1} dr 
+ \int_{0}^1 (|\phi'|^2 - \chi_1 (\lambda^i e^{U_{\lambda^i}^\ast (r)}-1) \phi^2 )r^{N-1} dr \\
&\geq \int_{0}^1 (|\phi' |^2 -\chi_1 (\lambda^i e^{U_{\lambda^i}^\ast (r)}-1) \phi^2 )r^{N-1} dr
\end{align*}
Since $|(\lambda^i e^{U_{\lambda^i }^\ast (r)}-1)|\leq C_{\lambda^i}$, for $r\in (r_0 /2 , 1)$,  the operator $-\Delta  -\chi_1 (\lambda^i e^{U^\ast_{\lambda^i} (r)}-1) $ 
on $B_1(0)$
with Neumann boundary condition has finitely may negative eigenvalues, and therefore 
\begin{equation}
\int_{0}^1 (|\phi' |^2 -\chi_1 (\lambda^i e^{U_{\lambda^i}^\ast (r)}-1) \phi^2 )r^{N-1} dr < 0
\end{equation}
has only finitely many linearly independent solutions. Thus, the Morse index of $U_{\lambda^i}^\ast $ is finite as desired.
\end{proof}

\section{Proof of Theorem \ref{KSmainthm} }
\label{sec5}

In this section, we prove Theorem \ref{KSmainthm}. Let $(R_\lambda^i )_{i=1}^ \infty$, 
 be an increasing, unbounded sequence of positive real numbers depending on $N$ and 
$\lambda$ such that $(U^\ast_\lambda)^\prime (R_\lambda^i)=0$ 
(see Lemma \ref{corbehazero}), where $U^\ast_\lambda$ is the solution to \eqref{KSwhole}. 
To prove Theorem \ref{KSmainthm}, we need two ingredients. First we show that, 
 for any $i \in \N$,
\begin{equation*}
\label{KSMlabdalimit}
R_\lambda^i \to 0, \text{ as $\lambda\to 0^+$}
\end{equation*}
and obtain necessary bounds on solutions.  
Then, we show that, for any $i\in \N$, 
 the map $\lambda\rightarrow R_\lambda^i$ is continuous.

\begin{prop}
\label{KSpinfty}
For each $\lambda > 0$, let $U^\ast_\lambda$ be the unique solution to \eqref{KSwhole} and denote by $(R_\lambda^i)_{i=1,\ldots ,\infty}$, the increasing sequence of all positive real numbers such that $(U_\lambda^\ast )^\prime (R_\lambda^i )=0$. Then, for any fixed $i\in \N$, we have
$$R_\lambda^i \to 0,\ as\ \lambda\to 0^+.$$
\end{prop}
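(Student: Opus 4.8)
The plan is to trap the first $i$ critical points of $U^*_\lambda$ inside an interval $(0,C_i r_\lambda)$, where $r_\lambda$ is the \emph{first} radius at which $U^*_\lambda$ meets the constant $\bar u_\lambda$ and $C_i$ is a constant depending only on $N$ and $i$, and then to prove separately that $r_\lambda\to0$ as $\lambda\to0^+$; since Lemma \ref{corbehazero} guarantees that $U^*_\lambda$ oscillates around $\bar u_\lambda$ for $\lambda$ small (so $r_\lambda$ is well defined and the sequence $(R_\lambda^i)$ is infinite), this suffices.

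First I would show $r_\lambda\to0$. Using the change of variables \eqref{defw}--\eqref{nrv}, Lemma \ref{behaorigin} and Lemma \ref{decaderu}, together with a quantitative version of the contraction argument in Proposition \ref{uniq} that keeps track of the dependence on $\lambda$ — note that the forcing term $m^2e^{-2\zeta}(\eta+2\zeta)$ in \eqref{eq:fze} equals $r^2(\eta+2\zeta)$, so near the origin the natural small parameter is $r$, not $\lambda$ — one obtains, on a left neighbourhood of $0$ of explicit length, the bound $U^*_\lambda(r)=-2\ln r+\ln\frac{2(N-2)}{\lambda}+O(1)$ with the $O(1)$ uniform in $\lambda$, and a bound $|(U^*_\lambda)'(r_\lambda)|\le C/r_\lambda$ with $C$ independent of $\lambda$. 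Since $\bar u_\lambda=\lambda e^{\bar u_\lambda}$ gives $\bar u_\lambda+\ln\lambda=\ln\bar u_\lambda$, comparing $-2\ln r+\ln\frac{2(N-2)}{\lambda}+O(1)$ with $\bar u_\lambda$ forces $-2\ln r_\lambda=\ln\frac{\bar u_\lambda}{2(N-2)}+O(1)$, hence $r_\lambda^2\asymp1/\bar u_\lambda\to0$ because $\bar u_\lambda\to\infty$ as $\lambda\to0^+$. I expect this step — producing the asymptotics \emph{together with control of the length of the interval on which they are valid, up to $r_\lambda$} — to be the main obstacle, exactly as flagged in the introduction.

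Next I would confine $U^*_\lambda$ near $\bar u_\lambda$ for all $r\ge r_\lambda$ using the Lyapunov functional $V$ from \eqref{dflf}. Set $h(u)=\lambda e^u-\tfrac12u^2$. Since $V$ is non-increasing and $V(r)\ge -\tfrac12(U^*_\lambda(r))^2+\lambda e^{U^*_\lambda(r)}=h(U^*_\lambda(r))$, while $V(r_\lambda)=h(\bar u_\lambda)+\tfrac12((U^*_\lambda)'(r_\lambda))^2$ (using $\lambda e^{\bar u_\lambda}=\bar u_\lambda$), we get $h(U^*_\lambda(r))-h(\bar u_\lambda)\le\tfrac12((U^*_\lambda)'(r_\lambda))^2\le C^2/(2r_\lambda^2)\asymp\bar u_\lambda$ for $r\ge r_\lambda$. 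Now $\bar u_\lambda$ is a nondegenerate local minimum of $h$ with $h''(\bar u_\lambda)=\bar u_\lambda-1$ and $h$ is strictly convex with $h''\asymp\bar u_\lambda$ in a fixed neighbourhood of $\bar u_\lambda$; since $U^*_\lambda>0$ by Remark \ref{rmkpoho} and $h$ is coercive on $(0,\infty)$ at the relevant sublevel, the admissible set is a bounded interval around $\bar u_\lambda$, so $(u-\bar u_\lambda)^2\lesssim\bar u_\lambda/(h''(\bar u_\lambda))\asymp1$. Hence $|U^*_\lambda(r)-\bar u_\lambda|\le C_N$ for every $r\ge r_\lambda$, with $C_N$ independent of $\lambda$ (for $\lambda$ small).

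Finally I would run a Sturm--Picone comparison, as in Lemma \ref{liouville} and Lemma \ref{corbehazero}. Writing $v=U^*_\lambda-\bar u_\lambda$ (which solves $-\Delta v+v=\bar u_\lambda(e^v-1)$) and $w(r)=r^{(N-1)/2}v(r)$, one has $w''+q(r)w=0$ with $q(r)=\bar u_\lambda\frac{e^v-1}{v}-1-\frac{(N-1)(N-3)}{4r^2}$ (with $\frac{e^0-1}{0}:=1$). By the previous step $\frac{e^v-1}{v}\ge c_1=c_1(C_N)>0$, so there is $K=K_N$, independent of $\lambda$, with $q(r)\ge\frac{c_1}{2}\bar u_\lambda=:q_0$ for every $r\ge Kr_\lambda$, since $1/r^2\le1/(Kr_\lambda)^2\asymp\bar u_\lambda/K^2$. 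By the Sturm comparison theorem $w$, hence $v$, vanishes in every interval of length $\pi q_0^{-1/2}=C'_Nr_\lambda(1+o(1))$; iterating, $U^*_\lambda$ meets $\bar u_\lambda$ at least $i$ further times inside $(Kr_\lambda,C_ir_\lambda)$ for a suitable $C_i=C_i(N,i)$, so altogether (with $r_\lambda$) at least $i+1$ times in $(0,C_ir_\lambda)$. Since $U^*_\lambda$ is nonconstant, between any two consecutive such intersections it has a strict interior extremum, i.e. a critical point; thus $U^*_\lambda$ has at least $i$ critical points in $(0,C_ir_\lambda)$, whence $R_\lambda^i<C_ir_\lambda\to0$ as $\lambda\to0^+$ by the first step, which proves the proposition.
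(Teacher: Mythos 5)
Your overall architecture (estimate $U^*_\lambda$ and $(U^*_\lambda)'$ at the first intersection $r_\lambda$ with $\bar u_\lambda$, deduce $r_\lambda^2\asymp 1/\bar u_\lambda$, confine $U^*_\lambda$ near $\bar u_\lambda$ for $r\ge r_\lambda$ by the Lyapunov functional, then produce $i$ critical points by Sturm--Picone in an interval of size comparable to $r_\lambda$) matches the paper's strategy, and your second and third steps are essentially correct; indeed your sublevel-set argument for $h(u)=\lambda e^u-\tfrac12u^2$ gives a two-sided bound $|U^*_\lambda-\bar u_\lambda|\le C_N$ and your oscillation on $(Kr_\lambda, C_{N,i}r_\lambda)$ even yields the slightly sharper conclusion $R^i_\lambda\le C_{N,i}\,r_\lambda$, whereas the paper compares on a fixed interval $[a/4,a]$. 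The genuine gap is your first step. You assert the expansion $U^*_\lambda(r)=-2\ln r+\ln\frac{2(N-2)}{\lambda}+O(1)$ with $O(1)$ uniform in $\lambda$ \emph{on an interval reaching $r_\lambda$}, together with $|(U^*_\lambda)'(r_\lambda)|\le C/r_\lambda$, and propose to get this from a ``quantitative version'' of the contraction in Proposition \ref{uniq}. That mechanism cannot reach $r_\lambda$: in the variables \eqref{defw}--\eqref{nrv} the forcing in \eqref{eq:fze} is $m^2e^{-2\zeta}(\eta+2\zeta)=r^2\,U^*_\lambda(r)$, and at $r=r_\lambda$ this equals $r_\lambda^2\,\bar u_\lambda$, which --- by the very relation $r_\lambda^2\asymp1/\bar u_\lambda$ you are trying to prove --- is of order one, not small. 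The contraction (and the Gronwall-type bootstrap of Lemma \ref{behaorigin}) works only where $m^2\zeta e^{-2\zeta}\ll1$, a region that terminates strictly before $r_\lambda$ as $\lambda\to0$; and since $U^*_\lambda(r)\approx 2|\ln r|+|\ln\lambda|$, the ``small parameter is $r$'' heuristic does not give a $\lambda$-independent neighbourhood either. There is also a circularity risk: you deduce $r_\lambda^2\asymp1/\bar u_\lambda$ from asymptotics whose region of validity you would need to compare with $r_\lambda$ in the first place.

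This is precisely why the paper spends Steps 2--7 of its proof on this point. It first proves $r_\lambda^2\le K_N/\bar u_\lambda$ \emph{without any refined asymptotics}: on $(0,r_\lambda)$ one has $U^*_\lambda>\bar u_\lambda$, so $e^x\ge1+x$ makes the potential in \eqref{eq:wag} of size $\bar u_\lambda$ on an interval $[A,2A]$ with $A\asymp\bar u_\lambda^{-1/2}$, and Sturm--Picone forces a zero of $w$ there, contradicting the minimality of $r_\lambda$ if $r_\lambda$ were larger (Step 2). Only then does it establish the uniform expansion up to $\zeta_\lambda$, via the decomposition $\eta=f+\tilde\eta$ with the explicit corrector $f(\zeta)=\frac{m^2}{2(N-1)}e^{-2\zeta}\bigl(\zeta+\frac{N-2}{4(N-1)}\bigr)$, a case distinction at $f(\zeta_\lambda)=1.1$ (Step 4 treats $f(\zeta_\lambda)\ge1.1$ using the defining relation $\bar u_\lambda=\lambda e^{\bar u_\lambda}$, not smallness; Steps 5--7 treat the other case with the constant $P_N<\tfrac13$ and, for $3\le N\le9$, a cancellation pairing $\sigma$ with $\sigma+\pi/\beta$ in the oscillatory Green's function), arriving at $|\tilde\eta(\zeta_\lambda)|+|\tilde\eta'(\zeta_\lambda)|\le C_N$ (Remarks \ref{pbou} and \ref{deub}), which is exactly the input your Steps 2--3 need. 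As written, your proposal presupposes this hardest part rather than proving it, so it does not yet constitute a proof of the proposition.
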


\begin{proof}
The proof is divided into several steps. We begin by giving some notations. Many constants and functions in the proof depend on $\lambda$ and
\begin{equation}
m = \sqrt{\frac{2(N-2)}{\lambda}} \, .
\end{equation}
However for the 
clarity of the notation, this dependence is not explicitly indicated, but the needed asymptotic is explained. 
If a constant depends only on the dimension $N$, we usually denote it by $C_N, c_N$, etc. Note that such constant can change from line 
to line. First, we define 
\begin{align}
f(\zeta) &= \frac{m^2 }{2(N-1)} e^{-2 \zeta } \left(\zeta + \dfrac{N-2}{4(N-1)} \right).
\end{align}
and let $\zeta \mapsto \eta$ be the unique solution of \eqref{eq:fze} (see Proposition \ref{uniq}). 
Setting  $\tilde{\eta}(\zeta)  = \eta(\zeta) - f(\zeta)$, 
we see that $\eta$ satisfies
\begin{equation}\label{eq:neq}
\tilde{\eta}^{\prime \prime} - (N-2) \tilde{\eta}^\prime +2(N-2) \tilde{\eta} = m^2 e^{-2\zeta} \eta(\zeta ) +\phi (\eta(\zeta))=: \tilde{g}(\zeta), 
\end{equation}
where $\phi$ is as in \eqref{dfph}.

Define $\alpha, \beta$, and $G_N$ as respectively in \eqref{dfab} and \eqref{dfgf} 
and recall that  $G_N \in L^1(\mathbb{R}) \cap L^\infty(\mathbb{R})$ for any $N \geq 3$. Hence, 
\begin{equation} \label{eqdf}
\tilde{\eta}(\zeta) =  \int_\zeta^\infty G_N(\sigma - \zeta) \tilde{g}(\sigma) \, d\sigma =  G_N\ast \tilde{g}(\zeta) .
\end{equation}
If $v := U^\ast /\overline{u}_\lambda$, then, $v$ satisfies
\begin{equation}\label{eq:vag}
-v'' - \frac{N-1}{r} v' + v = e^{\overline{ u}_\lambda (v - 1)} 
\end{equation}
and $w(r)=r^{\frac{N-1}{2}}(v (r) - 1)$ satisfies (see the proof of Lemma \ref{corbehazero})
\begin{equation}\label{eq:wag}
w^{\prime \prime} + \left(\dfrac{ e^{\overline{u}_\lambda (v -1)} - v}{v -1 }-\dfrac{ (N-1)(N-3)}{4r^2}  \right)w = 0.
\end{equation}
 For any $\lambda \in (0, 1/e)$, we recall that $\overline{u}_\lambda > 1$ is the solution of the equation $u = \lambda e^u$.
Let $r_\lambda$ be the smallest $r$ such that $U^\ast (r) = \overline{u}_\lambda$, or equivalently the smallest point such that $v(r) = 1$ or $w(r) = 0$.

\noindent
\textbf{Step 1.} For any $\delta > 0$, we have
\begin{equation}
  - (1 - \delta) \ln \lambda \leq  \overline{u}_\lambda ,
\end{equation}  
for any sufficiently small $\lambda$ depending on $\delta$. 

\begin{proof}[Proof of Step 1.]
 By taking the logarithm of the equality $\overline{u}_\lambda = \lambda e^{\overline{u}_\lambda}$, we obtain
\begin{equation}
\ln \overline{u}_\lambda -\overline{u}_\lambda  - \ln \lambda = 0\,.
\end{equation}
For $ v_1 = - (1-\delta)\ln \lambda$, we have
\begin{equation}
\ln v_1 - v_1 - \ln \lambda =   \ln ((1-\delta ) \ln \lambda^{-1}) -\delta \ln \lambda > 0 ,
\end{equation}
for any sufficiently large $\lambda$ depending only on $\delta$. On the other hand, for any fixed $\lambda$ and sufficiently large $v$, one has 
\begin{equation}
\ln v - v - \ln \lambda < 0 \,.
\end{equation}
In particular there is a solution of $u = \lambda e^{u}$ which is bigger than $v_1 = - (1-\delta)\ln \lambda$. Finally, since $\overline{u}_\lambda$ is the biggest solution, 
$\bar{u}_\lambda \geq v_1$, and the claim follows. 
\end{proof}

\begin{rmq}
For any $\delta > 0$, one can prove more the precise bound
\begin{equation}
-\ln \lambda + \ln(-\ln \lambda) < \overline{u}_\lambda \leq -(1 + \delta) \ln \lambda \, ,
\end{equation}
for any sufficiently small $\lambda$ depending on $\delta$.
\end{rmq}

\noindent
\textbf{Step 2.} 
Recall that $r_\lambda$ is the smallest $r$ such that $U^*(r) = \bar{u}_\lambda$. Then, 
there exists $K_N > 0$ such that $r_\lambda^2 < \frac{K_N}{\overline{u}_{\lambda}}$, for any small $\lambda > 0$.

\begin{proof}[Proof of Step 2.]
Set $K_N = \max\{16(N - 1)(N-3), 2(16 \pi)^2\}$. For a contradiction, assume that there exists a sequence  $\lambda_n \to 0$ as $n \to \infty$  such that $r_n^2  \geq K_N /\overline{u}_n $, where 
$r_n := r_{\lambda_n}$ and $\overline{u}_n := \overline{u}_{\lambda_n}$. Then $w_n := w_{\lambda_n} > 0$  and $v_n := v_{\lambda_n} > 1$
(solutions of \eqref{eq:wag} and \eqref{eqdf} with $\lambda = \lambda_n$)  on 
$I_n :=  [A_n, 2A_n]$ with $A_n = \sqrt{K_N/(16\overline{u}_n)}$ for any $n$. Since for any $x \geq 0$ one has $e^x  \geq  x + 1$ and $v_n > 1$, we have 
for any $r \in I_n$ 
\begin{equation}
\dfrac{ e^{\overline{u}_n (v -1)} - v}{v -1 }-\dfrac{ (N-1)(N-3)}{4r^2}  \geq \dfrac{ \overline{u}_n (v -1) + 1 - v}{v -1 } - \dfrac{  \overline{u}_n (N-1)(N-3)}{4K_N} 
\geq  \frac{3}{4} \overline{u}_n - 1 \,,
\end{equation} 
where the last inequality holds by the definition of $K_N$.
Furthermore, by Step 1, $\bar{u}_\lambda \to \infty$, and therefore it is possible to choose $n$ large enough such that 
\begin{equation}
\frac{3}{4}\overline{u}_n - 1 >   \frac{1}{2}\overline{u}_n \geq \frac{(16 \pi)^2 \overline{u}_n}{K_N}  \geq \frac{ (4\pi)^2}{A_n^2}.
\end{equation}
Then, $w_n$ satisfies
\begin{equation}
w_n'' + q_n(r)w_n = 0,  \text{ on } [A_n,  2A_n],
\end{equation}
with $q_n > (4\pi)^2/A_n^2$ for any sufficiently large $n$. However, the equation
\begin{equation}
m'' + \frac{(4\pi)^2}{A_n^2}m = 0
\end{equation}
has a solution $m(r) = \sin(4\pi r/A_n)$ which has zeros at $A_n + \frac{k}{4}A_n \in [A_n, 2A_n]$ for any $k \in \{0, 1, \cdots, 4\}$. By the Sturm-Piccone comparison theorem, $w_n$ has also a zero 
on $I_n$, contradicting the fact that $w_n > 0$ on $I_n$. 
\end{proof}

\noindent

Let $r_\lambda$ be as in Step 3. and let $\zeta_\lambda $ be defined by  (see \eqref{nrv})
 \begin{equation}
\label{relrzeta}
r_\lambda =\sqrt{\dfrac{2(N-2)}{\lambda}} e^{-\zeta_\lambda}.
\end{equation}

\noindent
\textbf{Step 3.}
There exists a constant $C_N$ such that 
for any sufficiently small $\lambda > 0$,  one has $f(\zeta_\lambda) \leq C_N$.

\begin{proof}[Proof of Step 3.]
Step 1 and Step 2 with $\delta = \frac{1}{2}$ imply for any small $\lambda > 0$
\begin{equation}
r_\lambda^2 \leq \frac{K_N}{\overline{u}_{\lambda}} \leq \frac{2K_N}{- \ln \lambda}\,,
\end{equation}
which is equivalent to
\begin{equation}
e^{-2\zeta_\lambda} \leq \dfrac{K_N}{N-2} \frac{\lambda}{-\ln \lambda} .
\end{equation}
The previous inequality can be rewritten as
\begin{equation}
 \zeta_\lambda \geq - \frac{1}{2} \left(\ln \left(\dfrac{K_N}{N-2} \lambda\right) - \ln(-\ln \lambda) \right) \,.
\end{equation}
In particular, we see that $\zeta_\lambda \to \infty$ as $\lambda \to 0$.
Since the function $x \mapsto xe^{-x}$ is decreasing on $(0, \infty)$, for sufficiently small $\lambda > 0$,
\begin{align}
f(\zeta_\lambda) &= \dfrac{N-2}{N-1}  \frac{ e^{-2\zeta_\lambda}}{\lambda} \left(\zeta_\lambda+ \dfrac{N-2}{4(N-1)} \right) 
\leq - \dfrac{K_N}{2 N-2} \frac{ \left(\ln(\frac{K_N}{ N-2} \lambda) - \ln(-\ln \lambda)\right) \frac{\lambda}{-\ln \lambda}}{\lambda} 
\leq C_N.
\end{align}
 This proves Step 3.
\end{proof}

\begin{rmq}\label{rmk:zti}
For clarity let us indicate explicitly the dependence of $f$ on $\lambda$ (or equivalently on $m$). 
Fix any $M > 0$ and for each $\lambda > 0$ choose $\bar{\zeta}_\lambda \geq 0$ such that $f_\lambda (\bar{\zeta}_\lambda) \leq M$. 
Since $\inf_K f \to \infty$ as $m \to \infty$ on any compact set $K \subset (0, \infty)$, one has
\begin{equation}
 \bar{\zeta}_\lambda \to \infty , \qquad \textrm{as } \lambda \to 0 \quad \textrm{ or equivalently if } \quad  m \to \infty \,.
\end{equation} 
We frequently use this observation below, often without further reference. 
\end{rmq}

Next, we derive estimates on $\tilde{\eta}$ solving \eqref{eq:neq} . We consider two cases:  $f(\zeta_\lambda )\leq 1.1$  and  $f(\zeta_\lambda )\geq 1.1$, where $\zeta_\lambda$ is given by \eqref{relrzeta}.

\noindent \textbf{Step 4.}
There exists a constant $\tilde{C}_N$ such that if, for sufficiently small $\lambda > 0$, one has
 $1.1 \leq f(\zeta_\lambda)$,  then $|\tilde{\eta} (\zeta_\lambda)| \leq \tilde{C}_N$.

\begin{proof}
First, the assumption and  Step $3$ yield $1 \leq f(\zeta_\lambda) \leq C_N$, and therefore by Remark \ref{rmk:zti},  $\zeta_\lambda \to \infty$ as $\lambda \to \infty$. 
Hence, there exist two constants $c_N<1<C_N$ such that, for any sufficiently small $\lambda$,
\begin{equation}
c_N \lambda \leq e^{-2\zeta_\lambda} 2\zeta_\lambda \leq C_N \lambda .
\end{equation}
Using that $\lambda = \overline{u}_\lambda e^{-\overline{u}_\lambda}$ and $\overline{u}_\lambda \to \infty$ as $\lambda \to 0$ (see Step 1), 
we obtain that for small  $\lambda$
\begin{equation}
C_N \lambda = \overline{u}_\lambda e^{-\overline{u}_\lambda + \ln C_N} \leq  (\overline{u}_\lambda - 2\ln C_N) e^{-\overline{u}_\lambda + 2\ln C_N} \, ,
\end{equation}
and 
\begin{equation}
c_N \lambda = \overline{u}_\lambda e^{-\overline{u}_\lambda + \ln c_N} \geq  (\overline{u}_\lambda - 2\ln c_N) e^{-\overline{u}_\lambda + 2\ln c_N} \, .
\end{equation}
Consequently
\begin{equation}
(\overline{u}_\lambda - 2\ln c_N) e^{-\overline{u}_\lambda + 2\ln c_N} \leq 
c_N \lambda \leq e^{-2\zeta_\lambda} 2\zeta_\lambda \leq C_N \lambda
\leq  (\overline{u}_\lambda - 2\ln C_N) e^{-\overline{u}_\lambda + 2\ln C_N} \,.
\end{equation}
Since the function $x \mapsto xe^{-x}$ is decreasing on $(0, \infty)$, we have
\begin{equation}
\overline{u}_\lambda - 2\ln c_N \geq 2\zeta_\lambda \geq \overline{u}_\lambda - 2\ln C_N \,.
\end{equation}
Recalling that, by definition,
\begin{equation}
\overline{u}_\lambda = u(r_\lambda) = f(\zeta_\lambda) + \tilde{\eta}(\zeta_\lambda) + 2 \zeta_\lambda,
\end{equation}
we deduce that
\begin{equation}
-2 \ln C_N \leq  f(\zeta_\lambda) + \tilde{\eta}(\zeta_\lambda) \leq -2 \ln c_N   .
\end{equation}
Since $1 \leq f(\zeta_\lambda) \leq C_N$, we obtain the desired result.
\end{proof}

 Before proceeding let us introduce some additional notation. Define  
\begin{equation}
\Gamma = 1.1, 
\end{equation}
and denote $\zeta_1^*$ the largest solution of $f(\zeta) = \Gamma$, where of course $\zeta_1^*$ depends on $\lambda$ and  by Remark \ref{rmk:zti}, $\zeta_1^* \to \infty$ as $\lambda \to \infty$. We remark that instead of 1.1, we can
take any number bigger than 1, sufficiently close to 1.  

 Fix any $\varepsilon_0 > 0$ and 
set 
\begin{equation}\label{defzeta2ast}
\zeta_2^* := \inf\{ \zeta\geq \zeta_1^* : |\eta(z)| \leq (1+ \varepsilon_0) P_N f(z) \textrm{ for any } z \geq \zeta\},
\end{equation} 
with $\inf \emptyset = \infty$. Denote
\begin{equation}
P_N := 
\frac{\left|\phi\left(\Gamma \right) \right|}{ \Gamma} \tilde{P}_N :=
 \frac{\left|\phi\left(\Gamma \right) \right|}{ \Gamma} \times 
\begin{cases} 
 (1 + e^{-\frac{(\alpha + 8) \pi}{2\beta}} ) \frac{4}{(\alpha + 8)^2 + 4\beta^2}& \textrm{if } \ 3\leq N\leq 9,\\ 
  \frac{4}{(\alpha + 8)^2} & \textrm{if } N=10,\\ 
 \frac{4}{(\alpha + 8)^2 - 4\beta^2} & \textrm{if } N>10\, ,
 \end{cases}
\end{equation}
where $\phi$ is defined in \eqref{dfph}.
Clearly, $P_N$ and $\tilde{P}_N$ are constants depending only on $N$ and $\zeta_2^*$ depends on the solution $\eta$. Since $\zeta_2^* \geq \zeta_1^*$, one has 
\begin{equation}
\zeta_2^* \to \infty \qquad \textrm{as } m \to \infty \,. 
\end{equation}
Moreover,
\begin{equation}
P_N = \frac{e^\Gamma - \Gamma - 1}{(N - 2)\Gamma} \tilde{P}_N <  \frac{e^\Gamma - \Gamma - 1}{3 \Gamma}  < \frac{1}{3}\,,
\end{equation}
where in the first inequality, after standard manipulations, we used that $N \mapsto \tilde{P}_N$ is increasing and $\tilde{P}_N \to 1/3$ as $N \to \infty$. 
In particular, for any  $\varepsilon_0 > 0$
one has 
$|\eta (\sigma)|  \leq P_{N, \varepsilon_0}  f(\sigma) $ for each $\sigma \in (\zeta_2^*, \infty)$, where $P_{N, \varepsilon_0} := P_N(1 + \varepsilon_0)$.

\medskip

Next, in the following three steps we obtain estimates on $\tilde{\zeta}$ on the interval $[\zeta_1^*, \infty)$ and in particular we 
prove that Step 4 remains valid if $f(\zeta_\lambda )\leq 1.1$.

\noindent {\bf Step 5:} For any $m > 0$ and $\varepsilon_0 >0$,  one has $\zeta_2^* < \infty$.

\begin{proof}[Proof of Step 5]
We proceed as in the proof of Lemma \ref{behaorigin}.
Using the representation formula \eqref{eqdf} and Young's inequality for convolutions, we obtain 
\begin{equation}\label{imca}
\int_\zeta^\infty |\tilde{\eta} (\sigma )|d\sigma \leq C_N \int^\infty_{\zeta}   |\tilde{g}(\sigma)|  d\sigma \,,
\end{equation}
where $C_N = \|G_N\|_{L^1}$. 
Since $\eta (\zeta)\to 0$ as $\zeta \to \infty$,  for 
any $\varepsilon > 0$  there is $\zeta_0 > 0$ depending on $N$, $\eta$, and $\varepsilon$ such that for any $\zeta \geq \zeta_0$ one has 
\begin{align}
\label{KSdefzeta01}
 |\phi(\eta)| 
&= 2(N-2) \left| e^{\eta} - 1 - \eta  \right| \leq \frac{\varepsilon}{2} |\eta|\,, \qquad 
\textrm{where } \eta = \eta(\zeta) \,.
\end{align}
By the definition of $\tilde{g}$ and \eqref{KSdefzeta01}, one has, for $\sigma \geq \zeta_0$, 
\begin{equation*}
|\tilde{g}(\sigma)| \leq  m^2 e^{-2\sigma} (|f(\sigma)|  +  |\tilde{\eta}(\sigma)|) +\frac{ \varepsilon}{2} (|\tilde{\eta}(\sigma)| + |f (\sigma)|) \,. 
\end{equation*}
Fix  $\overline{\zeta}_0 > 0$  such that $ m^2  e^{-2 \overline{\zeta}_0} \leq  \frac{\varepsilon}{2} $, 
and set $\zeta^*_0=\max\{\zeta_0 , \overline{\zeta}_0 \}$. Then we have, for $\sigma \geq \zeta_0^*$,
\begin{align}
\label{KSlem1e2o}
|\tilde{g}(\sigma)| &\leq  \varepsilon ( |f(\sigma)|+  |\tilde{\eta}(\sigma)|) 
\,.
\end{align}
Substituting \eqref{KSlem1e2o} into \eqref{imca} and requiring that $\varepsilon \in(0, 1/(2C_N))$ 
we obtain, for $\zeta \geq  \zeta_0^* > 0$, 
\begin{equation}
 \int_{\zeta}^\infty |\tilde{\eta}(\sigma)|\, d\sigma \leq  
   \frac{1}{2} \int_{\zeta}^\infty |f(\sigma)| \, d\sigma +  \frac{1}{2} \int_{\zeta}^\infty |\tilde{\eta}(\sigma)|\, d\sigma
  \,,
\end{equation}
and consequently
\begin{equation}\label{reqo}
  \int_{\zeta}^\infty |\tilde{\eta}(\sigma)|\, d\sigma \leq  
 \int_{\zeta}^\infty |f(\sigma)| \, d\sigma \,.
\end{equation}
 Using \eqref{KSlem1e2o} and
 \eqref{reqo},  we obtain for $\zeta \geq  \zeta _0^*$,
\begin{align}\label{bdb}
|\tilde{\eta}(\zeta)| 
&\leq \|G_N\|_{L^\infty} \int_\zeta^\infty |\tilde{g}(\sigma)| \, d\sigma \leq 
 \varepsilon C_N  \int_\zeta^\infty f(\sigma) + |\tilde{\eta}(\sigma)| \, d\sigma \leq 
 \varepsilon C_N  \int_\zeta^\infty f(\sigma) \, d\sigma \leq  \varepsilon C_{N} f(\zeta) \,.
\end{align}
By making $\varepsilon > 0$ smaller if necessary such that $\varepsilon \leq P_N(1 + \varepsilon_0)/C_{N}$ we obtain $\zeta_2^* \leq \zeta_0^*$, and the claim follows. 
\end{proof}

\noindent
{\bf Step 6:} For any small $\varepsilon_0 > 0$, there exists $m_0 > 0$ such that for each $m \geq m_0$ we have $\eta \leq 0$ on $[\zeta_2^*, \infty)$ where $\zeta_2^\ast$ is defined in \eqref{defzeta2ast}.

\begin{proof}[Proof of Step 6] 
Suppose first that $3\leq N \leq 9$. Then, we rewrite \eqref{eqdf} as
\begin{equation}
\label{KSdefetainto0}
\tilde{\eta} (\zeta )= \int^\infty_{\zeta} G_N(\sigma - \zeta) \tilde{g}(\sigma )  d\sigma =: 
 \int^\infty_{\zeta} F(\zeta, \sigma)  d\sigma 
\end{equation}
and
\begin{align*}
\int^\infty_{\zeta} F(\zeta, \sigma)  d\sigma &= \sum_{k = 0}^\infty \int_{\zeta+\frac{2k\pi}{\beta}}^{\zeta+\frac{(2k+1)\pi}{\beta}} F(\zeta, \sigma) d\sigma + \int_{\zeta +\frac{(2k+1)\pi}{\beta}}^{\zeta + \frac{(2k+2)\pi}{\beta}} F(\zeta, \sigma) d\sigma   
\\
&=  \sum_{k = 0}^\infty \int_{\zeta+\frac{2k\pi}{\beta}}^{\zeta+\frac{(2k+1)\pi}{\beta}} F(\zeta, \sigma) + F\left(\zeta, \sigma + \frac{\pi}{\beta}\right) d\sigma  \,,
\end{align*}
where
\begin{align}\label{frl}
F(\zeta, \sigma) + F\left(\zeta, \sigma + \frac{\pi}{\beta}\right) = G_N(\sigma - \zeta) \left(\tilde{g}(\sigma) -   e^{-\frac{\alpha\pi}{2\beta}} \tilde{g}\left(\sigma + \frac{\pi}{\beta}\right)\right) \,.
\end{align}
Recall, for any $\sigma \geq \zeta_2^*$ we have 
$|\eta(\sigma)| \leq P_{N,\varepsilon_0} f(\sigma)$, with $1 >  P_{N,\varepsilon_0}$ for any sufficiently small $\varepsilon_0 > 0$. 
In the following, we use the notation $O(m^{-1})$ for quantities converging to zero as $m \to \infty$. Then, 
 since $\phi$ is decreasing on $(0, \infty)$ and $f \pm \tilde{\eta} \geq 0$ on $[\zeta_2^*, \infty)$, one has 
\begin{multline}
\phi((f + \tilde{\eta})(\sigma)) - e^{-\frac{\alpha\pi}{2\beta}} \phi \left( (f + \tilde{\eta}) \left( \sigma+ \frac{\pi}{\beta} \right) \right)
 \leq \phi((f - |\tilde{\eta}|)(\sigma)) - e^{-\frac{\alpha\pi}{2\beta}} \phi \left( (f + |\tilde{\eta}|) \left( \sigma+ \frac{\pi}{\beta} \right) \right)  
\\
\begin{aligned}
&\leq \phi((1 - P_{N,\varepsilon_0}) f(\sigma)) - e^{-\frac{\alpha\pi}{2\beta}} \phi \left((1 + P_{N,\varepsilon_0})f \left( \sigma+ \frac{\pi}{\beta} \right) \right)  
\\
&\leq  \phi((1 - P_{N,\varepsilon_0}) f(\sigma)) - e^{-\frac{\alpha\pi}{2\beta}} \phi \left((1 + P_{N,\varepsilon_0}) (e^{-\frac{2\pi}{\beta}} + O(m^{-1})) f \left( \sigma \right) \right),
\end{aligned}
\end{multline}
where in the last step we used that, for $\sigma \geq \zeta_2^*$,
\begin{align}\label{fme}
f(\sigma + \pi/\beta) &= e^{-\frac{2\pi}{\beta}} f \left( \sigma \right) + \frac{\pi m^2}{2\beta(N-1)} e^{-\frac{2\pi}{\beta}} e^{-2\sigma}
\leq e^{-\frac{2\pi}{\beta}} f \left( \sigma \right) + \frac{\pi}{\beta \zeta_1} f(\sigma) \\
&\leq 
(e^{-\frac{2\pi}{\beta}} + O(m^{-1})) f \left( \sigma \right) \,.
\end{align}
We claim that  for any sufficiently small $\varepsilon_0, \varepsilon_1 > 0$ and any sufficiently large $m$, one has
\begin{equation}\label{acnd}
\phi \left( (1 -P_{N,\varepsilon_0}) z \right) \leq e^{-\frac{\alpha \pi}{2\beta}} \phi \left( (1 + P_{N,\varepsilon_0}) (e^{-\frac{2\pi}{\beta}}  + O(m^{-1}))z \right) 
- \varepsilon_1 z^2 , \qquad \textrm{for any } z \in \left[0, \Gamma \right] \,.
\end{equation}
Indeed, for any $\kappa > 1$ sufficiently close to one (see below), define 
\begin{equation}
\psi_\kappa(z) = \phi \left( (1 -P_{N}) z \right) - e^{-\frac{\alpha \pi}{2\beta}} \phi \left( (1 + P_{N}) \kappa e^{-\frac{2\pi}{\beta}}  z \right) 
\end{equation}
and note that $\psi_\kappa(0) = \psi'_\kappa(0) = 0$. Moreover, using that $\phi''(z) = -2(N-2) e^z$ and $P_N < 1/3$, we have, for $\kappa = 1$ and any $z \in [0, \Gamma]$,
\begin{align}
\psi''_1(z) &= -2(N-2) \left( (1 - P_{N})^2 e^{(1 -P_{N}) z} - e^{-(\frac{\alpha}{2} + 4) \frac{\pi}{\beta}} (1 + P_{N})^2 e^{(1 + P_{N}) e^{-\frac{2\pi}{\beta}} z} \right)  \\
&\leq -2(N-2) e^{(1 -P_{N}) z}  \left(  (1 - P_{N})^2  - e^{-(\frac{\alpha}{2} + 4) \frac{\pi}{\beta}} (1 + P_{N})^2 e^{ 2 P_{N}z}  \right) \\
&\leq -2(N-2) e^{(1 -P_{N}) z}  \left(  (1 - P_{N})^2  - e^{-(\frac{\alpha}{2} + 4) \frac{\pi}{\beta}} (1 + P_{N})^2 e^{ 2 P_{N}\Gamma}  \right) \\
&\leq -2(N-2) e^{(1 -P_{N}) z}  \left(  \frac{4}{9}  - e^{-\pi} \frac{16}{9} e^{ 2/3 \cdot 1.1}  \right) = - 2c_N < 0.
\end{align}
Then by the continuity $\psi''_\kappa(z) < -c_N < 0$ for any $\kappa > 1$ sufficiently close to $1$ and any $z \in [0, \Gamma]$. Fix such $\kappa_0 > 1$. Thus $\psi_\kappa (z) < -c_N z^2$ on $[0, \Gamma]$, and we 
obtain that \eqref{acnd} holds true for any $\varepsilon_ 1 < c_N$,  for any sufficiently small $\varepsilon_0 > 0$ and large $m$. The claim follows.

 In addition, using that $f$ is decreasing and that $|\tilde{\eta} (\sigma )|\leq P_{N,\varepsilon_0}f(\sigma)$, we have, for $\sigma \geq \zeta_2^* \geq \zeta_1^*$, 
\begin{align*}
m^2 e^{-2\sigma} \left( (\tilde{\eta} +f)(\sigma ) - e^{-\frac{\pi}{2\beta}(4+\alpha) } (\tilde{\eta} +f) \left(\sigma +\dfrac{\pi}{\beta}\right)  \right) 
&\leq 
2 (1+P_{N,\varepsilon_0}) m^2 e^{-2\sigma} f (\sigma)  \\
&\leq \frac{C_N}{\zeta_1^*} f^2(\sigma) \leq \varepsilon_1 f^2(\sigma) \,,
\end{align*}
where we used that by Remark \ref{rmk:zti}, $\zeta_1^* \to \infty$ as $m \to \infty$. Therefore 
recalling that $\tilde{g}(\zeta )= \phi (\eta(\zeta )+m^2 e^{-2\zeta} \eta(\zeta )$, \eqref{acnd} yields
\begin{equation}\label{ginq}
\tilde{g}(\sigma) -   e^{-\frac{\alpha\pi}{2\beta}} \tilde{g}\left(\sigma + \frac{\pi}{\beta}\right) \leq 0.
\end{equation}
Since for any integer $k \geq 0$, one has $G_N(\sigma -\zeta) \geq 0$ on the interval $\left(\zeta+\frac{2k\pi}{\beta},  \zeta+\frac{(2k+1)\pi}{\beta} \right)$, we obtain from 
\eqref{frl}
$$
F(\zeta, \sigma) + F\left(\zeta, \sigma + \frac{\pi}{\beta}\right)  \leq 0,
$$
and Step 6 follows. 

Next, assume $N \geq 10$ and notice that $G_N \geq 0$ in this case.  Also, since $|\tilde{\eta}(\sigma)| \leq P_{N, \varepsilon_0} f(\sigma)$ on $[ \zeta_2^* ,\infty)$ and 
$P_{N, \varepsilon_ 0} < 1$ for any sufficiently small $\varepsilon_0$, we obtain that $\eta = f + \tilde{\eta} \geq 0$ on $[\zeta_2^*, \infty)$. Since $e^{x}-1-x\geq \dfrac{1}{2}x^2$ for $x \geq 0$, then for any $\zeta \geq \zeta_2^*$,
\begin{align*}
\tilde{\eta} (\zeta )\leq  \int^\infty_{\zeta} G_N(\sigma - \zeta) \Big(m^2 e^{-2\sigma} \eta(\sigma )- (N-2) \eta^2 (\sigma ) \Big)  d\sigma .
\end{align*}
Also, since $\eta \geq 0$ we have 
\begin{align}
m^2 e^{-2\sigma} \eta(\sigma )- (N-2) \eta^2 (\sigma ) &\leq \eta(\sigma ) \left(c_N \frac{f(\sigma)}{\sigma} - (f(\sigma) - |\tilde{\eta}(\sigma)|) 
\right)
\\
&\leq \eta(\sigma ) f(\sigma) \left( \frac{c_N}{\zeta_1^*} - (1 - P_{N, \varepsilon_0}) \right) \leq 0 \,,
\end{align}
where $c_N$ depends only on $N$ and the last inequality follows for any sufficiently large $\zeta_1^*$, that is, for sufficiently large $m$.
Thus $\eta(\zeta) \leq 0$ for each $\tilde{\eta} \geq \zeta_2^*$ as desired.  
\end{proof}

{\bf Step 7:} For any sufficiently small $\varepsilon_0 > 0$, there exists $m_0$ such that for each $m \geq m_0$ we have 
$\zeta_2^* = \zeta_1^*$, where $\zeta_2^\ast$ is defined in \eqref{defzeta2ast} and $\zeta_1^\ast$ is the largest solution to $f(\zeta )=\Gamma$. In particular, 
$|\tilde{\eta} (\zeta_\lambda )|\leq \tilde{C}_N$.

\begin{proof}[Proof of Step 7] 
In Step $6$, we proved that $\tilde{\eta} \leq 0$ on $(\zeta_2^*, \infty)$. In order to obtain an estimate on $|\tilde{\eta}|$, we need a lower bound on $\tilde{\eta}$. 

First assume $3\leq N \leq 9$. Since $ G_N(\sigma - \zeta) \leq 0$  on the interval $\left(\zeta+\frac{(2k+1)\pi}{\beta}, 
\zeta+\frac{(2k+2)\pi}{\beta}\right)$,   
\eqref{ginq} and \eqref{frl} yield 
$$
F(\zeta, \sigma) + F\left(\zeta, \sigma + \frac{\pi}{\beta}\right)  \geq 0.
$$ 
Consequently by using that $\phi$ is decreasing and $\tilde{\eta} \leq 0$, we obtain, for any $\zeta \geq \zeta_2^*$,
\begin{align}
\tilde{\eta}(\zeta) &\geq
 \int_{\zeta}^{\zeta + \frac{\pi}{\beta}} G_N(\sigma - \zeta)\phi((f + \tilde{\eta})(\sigma)) d\sigma 
 +  m^2 \int_{\zeta}^{\infty} G_N(\sigma - \zeta) e^{-2\sigma} (\tilde{\eta} +f) (\sigma ) d\sigma \\
&\geq  \int_{\zeta}^{\zeta + \frac{\pi}{\beta}} G_N(\sigma - \zeta) \phi(f(\sigma)) d\sigma  - m^2 (1+P_{N, \varepsilon_0})  \int_{\zeta}^{\infty} |G_N(\sigma - \zeta)| e^{-2\sigma} f (\sigma ) d\sigma .
\end{align}
In order to estimate $\phi(f(\sigma))$ we use that, for any $y \geq x > 0$, one has 
\begin{equation}
\frac{\phi(x)}{\phi(y)} \leq \frac{x^2}{y^2} \,.
\end{equation}
Indeed this inequality is equivalent to 
\begin{equation}\label{einq}
\frac{e^x - x - 1}{x^2} \leq \frac{e^y - y - 1}{y^2}
\end{equation}
which is true since the function $x \mapsto (e^x - x - 1)/x^2$ is increasing on $(0, \infty)$. Hence, since $\phi < 0$ on $(0, \infty)$ we have 
\begin{equation}
\phi(f(\sigma)) \geq \phi(f(\zeta)) \left( \frac{f(\sigma)}{f(\zeta)} \right)^2 = \phi(f(\zeta)) e^{-4(\sigma - \zeta)} \left( \frac{\sigma + c_N}{\zeta + c_N} \right)^2 \,.
\end{equation}
Using that $\sigma \in (\zeta, \zeta + \pi/\beta)$ and $\zeta \geq \zeta_1^* \to \infty$  as $m \to \infty$, we have 
\begin{equation}\label{oldi}
\phi(f(\sigma)) \geq \phi(f(\zeta)) e^{-4(\sigma - \zeta)} (1 + O(m^{-1})) \,,
\end{equation}
and therefore
\begin{equation}
m^2 (1+P_{N, \varepsilon_0})  \int_{\zeta}^{\infty} |G_N(\sigma - \zeta)| e^{-2\sigma} f (\sigma ) d\sigma 
\leq c_N m^2 f(\zeta) e^{-2\zeta} \leq  c_N\frac{f^2(\zeta)}{\zeta} = O(m^{-1}) f^2(\zeta)\,.
\end{equation}
Thus, for any $\zeta \geq \zeta_2^*$
\begin{align}
\tilde{\eta}(\zeta)
&\geq \dfrac{\phi(f(\zeta))}{\beta} \int_{\zeta}^{\zeta + \frac{\pi}{\beta}} e^{-(\frac{\alpha}{2} +4)(\sigma - \zeta)} \sin(\beta(\sigma - \zeta)) d\sigma - O(m^{-1}) f^2(\zeta)
\\
&= \dfrac{4\phi(f(\zeta))}{(\alpha + 8)^2 + 4\beta^2} (1 + e^{-\frac{(\alpha + 8) \pi}{2\beta}}) - O(m^{-1})f^2(\zeta)  
  \,.
\end{align}
Using again that  $\tilde{\eta} \leq 0$ and  $x \mapsto \phi(x)/x$, is decreasing we obtain for any $\zeta \geq \zeta_2^* \geq \zeta_1^*$, that is, $f(\zeta) \in (0, \Gamma]$ 
and sufficiently large $m$
\begin{align}
|\tilde{\eta}(\zeta)| &\leq 
  \dfrac{4\phi(f(\zeta))}{((\alpha + 8)^2 + 4\beta^2)f(\zeta)} (1 + e^{-\frac{(\alpha +8)\pi}{2\beta}}) f(\zeta) +  O(m^{-1}) f^2(\zeta)
\\  
  &\leq
\left( P_N + O(m^{-1}) \right) f(\zeta) < \left(1 + \frac{\varepsilon_0}{2}\right) P_N f(\zeta) \,.
\end{align}
If $\zeta_2^* > \zeta_1^*$ , then, by continuity, $|\tilde{\eta}(\zeta)| \leq (1 + \varepsilon_0)P_N |f(\zeta)| $  holds for any $\zeta_1^* \leq \zeta \leq \zeta_2^*$ sufficiently close to $\zeta_2^*$, a contradiction to the definition of $\zeta_2^*$. Thus 
$\zeta_1^* = \zeta_2^*$ as desired. 

If $N \geq 10$,  
using $G_N \geq 0$, the monotonicity of $\phi$, and $\tilde{\eta} \leq 0$ as above, we obtain, for any $\zeta \geq \zeta_1^*$,
\begin{align*}
\eta (\zeta )&\geq  \int^\infty_{\zeta} G_N(\sigma - \zeta)(m^2 e^{-2\sigma} \tilde{\eta}(\sigma )+\phi (f(\sigma )) )  d\sigma 
\\
&\geq \frac{\phi (f(\zeta))}{(\zeta + c_N)^2} \int^\infty_{\zeta} G_N(\sigma - \zeta)e^{-4(\sigma - \zeta)}(\sigma + c_N)^2  d\sigma - O(m^{-1}) f^2(\zeta) \,,
\end{align*}
and note that we could not use \eqref{oldi} since $\sigma - \zeta$ is unbounded. 
Then, if $N > 10$, one has 
\begin{equation}
\tilde{\eta} (\zeta) \geq  \dfrac{\phi (f(\zeta))}{( (\alpha /2 + 4)^2 - \beta^2 )} \left(1 - \frac{\tilde{c}_N}{\zeta + c_N} \right)   -   O(m^{-1}) f^2(\zeta ) \,
\end{equation}
and using again that $|\phi(f(\zeta))| \leq \bar{c}_N f^2(\zeta)$ for any $\zeta \geq \zeta_1^*$ and $\zeta \to \infty$ as $m \to \infty$, we have 
\begin{equation}
\tilde{\eta} (\zeta) \geq  \dfrac{\phi (f(\zeta))}{( (\alpha /2 + 4)^2 - \beta^2 )}  -   O(m^{-1}) f^2(\zeta ) \,.
\end{equation}
If $N = 10$, one similarly has 
\begin{equation}
\tilde{\eta} (\zeta) \geq  \dfrac{\phi (f(\zeta))}{ (\alpha /2 + 4)^2} -   O(m^{-1}) f^2(\zeta ) \,.
\end{equation}
The rest of the proof is the same as in the case $3 \leq N \leq 9$. 
\end{proof}

\begin{rmq} \label{pbou}
In Steps 4-7 we proved that 
\begin{equation}
0 \geq \tilde{\eta}(\zeta) \geq -f(\zeta) \geq -\Gamma \qquad \textrm{for any } \zeta \geq \zeta^*_1 
\end{equation}
which in turn implies 
\begin{equation}
0 \leq \eta \leq f(\zeta) \qquad \textrm{for any } \zeta \geq \zeta^*_1 \,.
\end{equation}
In the original variables, for $U^\ast_\lambda (r) = \eta(\zeta) + 2 \zeta$ we have
\begin{equation}
-2\ln r + \ln \frac{2(N - 2)}{\lambda} \leq   U^\ast_\lambda(r) \leq -2\ln r + \ln \frac{2(N - 2)}{\lambda}  + c_N r^2(1 - \ln r)
\qquad \textrm{for any } r \leq \tilde{c}_N \,.  
\end{equation}
The importance of this bound is in the estimate on $U^*_\lambda$ on an interval which is independent of $\lambda$. An interested 
reader can calculate explicitly constants $c_N$ and $\tilde{c}_N$.
\end{rmq}

\begin{rmq}\label{deub}
From Remark \ref{pbou} we can also obtain an estimates on $(U^\ast_\lambda)'$ as follows. By \eqref{eqdf}
\begin{equation}
\tilde{\eta}'(\zeta) = -\int_\zeta^\infty G_N'(\sigma - \zeta) \tilde{g}(\sigma) \, d\sigma \,.
\end{equation}
Since $G'_N$ is a bounded integrable function, using Remark \ref{pbou} and analogous estimates as in 
\eqref{bdb} we have 
\begin{equation}
|\tilde{\eta}'(\zeta)| \leq C_N f(\zeta) \qquad \textrm{for any } \zeta \geq \zeta^*_1 \,.
\end{equation}
In the original variables the last bound translates into
\begin{multline}
\left|(U^\ast_\lambda)'(r) + \frac{2}{r} +  r c_N^* - \frac{r}{N - 1} \left(\ln \sqrt{\frac{2(N-2)}{\lambda}} - \ln r \right)\right| = \frac{1}{r}|\tilde{\eta}(\zeta)| \\
\leq  
C_N\frac{1}{r} f(\zeta) \leq C_{N, \lambda} r (1 + |\ln r|) \qquad \textrm{for any } r \leq c_N \,,
\end{multline}
where $C_{N, \lambda}$ is bounded in $\lambda$ uniformly on compact subsets of $(0, \infty)$.
\end{rmq}

\noindent {\bf Step 8:} Proof of Proposition \ref{KSpinfty}.
\medskip

\noindent\textit{Proof of Step 8.}
Recall that $r_\lambda$ is the smallest solution of $U^*(r) = \overline{u}_\lambda$ and 
$\zeta_\lambda$ is the corresponding transformed variable, see \eqref{relrzeta}. 
Denote $z_\lambda := M_\lambda^1$, that is, $z_\lambda$ is 
first critical point of $U^\ast$ and let $\rho_\lambda$ be its transformed variable, see \eqref{relrzeta} with $r_\lambda$ and $\zeta_\lambda$ replaced respectively 
by $z_\lambda$ and $\rho_\lambda$.

First, we show that $z_\lambda \geq r_\lambda$. Indeed, otherwise $z_\lambda < r_\lambda$ and as in the proof of 
Lemma \ref{liouville}, we have that 
the function $V$ defined by \eqref{dflf} is decreasing in $r$.  Then, as in  the proof of Lemma \ref{liouville} 
we obtain that $U^*(r) \geq U^*(z_\lambda) > \bar{u}_\lambda$
a contradiction to Lemma \ref{corbehazero}.
Hence for the rest of the proof we assume that $z_\lambda \geq r_\lambda$. 

By Steps 5-7 (cf. Remark \ref{pbou}) one has $|\tilde{\eta} (\zeta )|\leq C_N$ for any $\zeta \geq \zeta^*_1$.  
In particular, 
$|\tilde{\eta} (\zeta_\lambda)| \leq C_N$ if 
$\zeta_\lambda \geq \zeta^*$, that is, if $f(\zeta_\lambda) \leq \Gamma$. But $|\tilde{\eta} (\zeta_\lambda)| \leq C_N$
holds also by Step 4 if $f(\zeta_\lambda) \geq \Gamma$. Overall, we have $|\tilde{\eta} (\zeta_\lambda)| \leq C_N$.

We claim that $|\eta (\zeta )|\leq C_N$ holds in fact for all $\zeta \geq \zeta_\lambda$ and any sufficiently small $\lambda > 0$. Indeed, if $\zeta_\lambda \geq \zeta_1^*$, then
the statement is already proved in Steps 5-7. If $\zeta_\lambda < \zeta_1^*$, 
assume that there exists $\tilde{\zeta} \in (\zeta_\lambda, \zeta_1^*]$ such that $\tilde{\eta}' (\tilde{\zeta}) = 0$. Without loss of generality let 
$\tilde{\zeta}$ be the largest such number.
Since $\tilde{\zeta} < \zeta_1^*$, then $f(\tilde{\zeta}) \geq \Gamma = 1.1$, and consequently for any large $m$ (or small $\lambda$), 
one has $f'(\tilde{\zeta}) < - \frac{2}{\Gamma} f(\tilde{\zeta}) \leq -2$. 
If $\tilde{r}$  corresponds to $\tilde{\zeta}$, see \eqref{relrzeta}, then
$$
(U^\ast)^\prime (\tilde{r}) =- \frac{1}{\tilde{r}} (f'(\tilde{\zeta}) + \tilde{\eta}' (\tilde{\zeta}) + 2) =  - \frac{1}{\tilde{r}} (f'(\tilde{\zeta}) + 2)  > 0.
$$ 
Since $(U^\ast)^\prime (r) < 0$ for $r$ sufficiently close to $0$, we obtain $\tilde{r} \geq z_\lambda$, and therefore $\tilde{\zeta} \leq  \zeta_\lambda$, a contradiction to $\zeta_\lambda < \tilde{\zeta}$. Thus, no such $\tilde{\zeta}$ exists, and therefore
$\tilde{\eta}$ is increasing on $(\zeta_\lambda ,\zeta_1^*)$. Since $\max\{|\eta (\tilde{\eta}_\lambda)|, |\tilde{\eta}(\zeta_1^*)| \} \leq C_N$, 
we deduce that  $|\tilde{\eta} (\zeta )|\leq C_N$, for all $\zeta \geq \zeta_\lambda$ and the claim follows.

Now, $|\tilde{\eta}| \leq C_N$ implies that $|\tilde{g}| \leq C_N$ (defined in \eqref{eq:neq}) on $(\zeta_\lambda, \infty)$. Differentiating \eqref{eqdf} and using that $G'_N$ is integrable, we find
\begin{equation}
|\tilde{\eta}'(\zeta_\lambda)| \leq \int_{\zeta_\lambda}^\infty |G_N'(\sigma - \zeta_\lambda)| |\tilde{g}(\sigma)| d\sigma \leq C_N.
\end{equation}
Then,  using \eqref{relrzeta} and $|f'(\zeta )| \leq 3 f(\zeta)$, for any $|\zeta|$ large enough, we have
\begin{equation}
|(U^\ast )^\prime (r_\lambda)| \leq  \frac{ |f'(\zeta_\lambda)| + |\tilde{\eta}'(\zeta_\lambda)|  + 2}{r_\lambda} \leq \frac{C_N}{r_\lambda}  = C_N\sqrt{\lambda e^{2\zeta_\lambda}} \,. 
\end{equation} 
Furthermore, recalling that $U^\ast (r) = 2\zeta + f(\zeta) + \tilde{\eta}(\zeta)$, we deduce 
from Step 3, and $|\tilde{\eta} (\zeta_\lambda)| \leq C_N$ that  $2\zeta_\lambda - c_N \leq U^\ast (r_\lambda) = \overline{u}_\lambda \leq 2\zeta_\lambda + c_N$. Consequently,  $\overline{u}_\lambda = \lambda e^{\overline{u}_\lambda}$ yields
\begin{equation}
|(U^\ast)^\prime (r_\lambda)| \leq  C_N \sqrt{ \lambda e^{\overline{u}_\lambda}} = C_N \sqrt{\overline{u}_\lambda}.
\end{equation}

Recalling that $v := U^\ast /\overline{u}_\lambda$ solves \eqref{eq:vag}, we obtain
\begin{equation}
\label{KSvprime}
|v'(r_\lambda)| \leq C_N \frac{1}{\sqrt{\overline{u}_\lambda}} \qquad \textrm{and} \quad v(r_\lambda) = 1 \,.
\end{equation}
 Furthermore, since the function 
\begin{equation}
r \mapsto E(r) = \frac{(v^\prime (r))^2}{2} + \frac{e^{\overline{u}_\lambda (v(r) - 1)}}{\overline{u}_\lambda} - \frac{v^2 (r)}{2}
\end{equation}
is non-increasing,  any $r \geq r_\lambda$ one has
\begin{equation}
\frac{C_N}{\overline{u}_\lambda} - \frac{1}{2} \geq  E(r_\lambda) \geq E(r) \geq \frac{e^{\overline{u}_\lambda (v(r) - 1)}}{\overline{u}_\lambda} - \frac{v^2(r)}{2}
\geq - \frac{v^2(r)}{2} \, .
\end{equation}
 Thus, for sufficiently small $\lambda > 0$, 
\begin{equation}
v^2(r)  \geq 1 - \frac{C_N}{\overline{u}_\lambda}  \qquad \textrm{for any} \quad r \geq r_\lambda \,,
\end{equation}
and consequently 
\begin{equation}\label{npv}
\sup_{r \geq r_\lambda} (1 - v)_+ \to 0 \qquad \textrm{as} \quad \lambda \to 0 \,,
\end{equation}
 where $g_+ = \max\{g, 0\}$ denotes a positive part of a function $g$. Recall that $w(r)=r^{\frac{N-1}{2}}(v (r) - 1)$ satisfies \eqref{eq:wag} and clearly 
\begin{equation}
\dfrac{ e^{\overline{u}_\lambda (v -1)} -v}{v -1 } = \dfrac{ e^{\overline{u}_\lambda (v -1)} - 1}{v -1 } - 1 \,.
\end{equation} 
Fix  any $\mu > 0$ and $a > 0$ and denote $I_a := \left[\frac{a}{4}, a\right]$.
Choose any $r \in I_a$. If $\overline{u}_\lambda (v(r) -1)  \geq -1$, then using that $x \mapsto (e^x - 1)/x$ is increasing  we have for sufficiently small $\lambda$ (or large $\bar{u}_\lambda$ by
Step 1)
\begin{equation}
 \dfrac{ e^{\overline{u}_{\lambda } (v(r)  -1)} - 1}{v(r)  - 1 } =  \dfrac{ e^{\overline{u}_{\lambda } (v(r)  -1)} - 1}{\overline{u}_{\lambda} (v(r)  -1)} \overline{u}_\lambda  
 \geq   \overline{u}_\lambda \left(1 - \frac{1}{e}\right) \geq \mu \,. 
\end{equation}
On the other hand if $\overline{u}_\lambda (v(r) -1)  < -1$, then  $v(r) < 1$ and 
\begin{equation}
 \dfrac{ e^{\overline{u}_{\lambda } (v(r)  -1)} - 1}{v(r)  -1 } \geq \dfrac{e^{-1} - 1}{v(r)  - 1} \geq \frac{1 - e^{-1}}{\sup_{\rho \geq r_\lambda}(1 - v(\rho))_+} >  \mu \,, 
\end{equation}
for sufficiently small $\lambda$, where we used \eqref{npv} in the last inequality. 
Hence, for any $\mu > 0$ and $a > 0$ one has for sufficiently small $\lambda > 0$ that 
$$ 
\dfrac{ e^{\overline{u}_\lambda (v -1)} -v}{v -1 }-\dfrac{(N-1)(N-3)}{4r^2}  \geq \mu - C_{N, a} \qquad \textrm{for any } \quad r \in I_a := \left[\frac{a}{4}, a\right].
$$
Thus, given $a > 0$ and integer $i > 0$,  there is large $\mu$, 
such that a solution of  the  equation $z'' + (\mu - C_{N, a} )z = 0$ has at least $i + 2$ zeros on $I_a$, and by 
Sturm-Picone  oscillation theorem for any sufficiently small $\lambda > 0$, the function $w$ has at least $i + 1$ zeros on $I_a$. 
Consequently, $U^* (r) = \overline{u}_\lambda$ has at least $i + 1$ solutions on  $I_a$, and therefore $U^*$ has at least $i$
critical points on $I_a$. In a different notation for any $j \in \{1, \cdots, i\}$ and any $a > 0$ one has $R^j_\lambda < a$ for any sufficiently small $\lambda > 0$.   
\end{proof}

\begin{prop}
\label{KScontmp}
For any $i\in \N$, the function $\lambda \rightarrow R_\lambda^i$ is continuous.
\end{prop}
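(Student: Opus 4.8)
The plan is to deduce Proposition \ref{KScontmp} from three ingredients: the continuity of $\lambda \mapsto U^*_\lambda$ in $C^1_{loc}((0,\infty))$, the non-degeneracy of every critical point of $U^*_\lambda$, and a sign estimate for $(U^*_\lambda)'$ near the origin that is uniform in $\lambda$. Throughout one restricts to $\lambda\in(0,\lambda^*_N)$, where $R^i_\lambda$ is well defined for every $i$ by Lemma \ref{corbehazero}.

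First I would establish the continuity $\lambda\mapsto U^*_\lambda$. Fix $\lambda_0$ and a compact neighbourhood $\mathcal N\subset(0,\lambda^*_N)$ of $\lambda_0$; on $\mathcal N$ the constant $m=\sqrt{2(N-2)/\lambda}$ stays bounded away from $0$ and $\infty$, so an inspection of the proof of Proposition \ref{uniq} shows that the radius $\bar r$, the base point $\zeta_0$ and the contraction constant can be taken uniform for $\lambda\in\mathcal N$. Since the operator $F$ of that proof depends continuously on $\lambda$ as a map into $X=C^0([\zeta_0,\infty))$, the contraction mapping theorem with parameters gives that $\lambda\mapsto\eta_\lambda$ is continuous into $X$, and differentiating the representation \eqref{intformzeta} (using $G_N'\in L^1$) gives continuity of $\lambda\mapsto\eta'_\lambda$ in $C^0([\zeta_0,\infty))$ as well. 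Extending $\eta_\lambda$ to all of $\R$ by solving \eqref{eq:fze} backwards from $\zeta_0$ with Cauchy data $(\eta_\lambda(\zeta_0),\eta'_\lambda(\zeta_0))$ and invoking continuous dependence of ODE solutions on data and parameters, one obtains $\eta_\lambda\to\eta_{\lambda_0}$ in $C^1_{loc}(\R)$; translating back through $u(r)=\eta(\zeta)+2\zeta$, $\zeta=\ln m-\ln r$ (a change of variables depending continuously on $\lambda$; see \eqref{nrv}), this yields $U^*_\lambda\to U^*_{\lambda_0}$ in $C^1_{loc}((0,\infty))$, and in particular $(\lambda,r)\mapsto\big(U^*_\lambda(r),(U^*_\lambda)'(r)\big)$ is jointly continuous.

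Next I would show that every critical point is non-degenerate. If $(U^*_\lambda)'(R)=0$ then, evaluating \eqref{KSwhole} at $r=R$, one gets $(U^*_\lambda)''(R)=U^*_\lambda(R)-\lambda e^{U^*_\lambda(R)}$. By Lemma \ref{liouville}, $U^*_\lambda(R)>\underline u_\lambda$, and $U^*_\lambda(R)\neq\bar u_\lambda$ because otherwise $w(r)=r^{(N-1)/2}(U^*_\lambda(r)-\bar u_\lambda)$, which satisfies the linear equation $w''=m(r)w$ with continuous coefficient $m(\cdot)$ as in the proof of Lemma \ref{corbehazero}, would have $w(R)=w'(R)=0$, hence $w\equiv0$ by ODE uniqueness, contradicting the blow-up of $U^*_\lambda$ at $0$. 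Thus $U^*_\lambda(R)$ is not a root of $u=\lambda e^u$, so $(U^*_\lambda)''(R)\neq0$; consequently critical points are isolated and every compact subinterval of $(0,\infty)$ carries only finitely many of them. Moreover, by Remark \ref{deub} (whose constant is bounded uniformly for $\lambda$ in compact subsets of $(0,\infty)$) there are $r_0>0$ and a neighbourhood $\mathcal N'\subset\mathcal N$ of $\lambda_0$ such that $(U^*_\lambda)'(r)=-\tfrac1r(\eta'_\lambda(\zeta)+2)<0$ on $(0,r_0]$ for all $\lambda\in\mathcal N'$; hence $U^*_\lambda$ has no critical point in $(0,r_0]$ there, and $R^1_{\lambda_0}>r_0$.

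Finally I would run the perturbation argument. Fix $i$; using Lemma \ref{corbehazero} choose $R_1\in(R^i_{\lambda_0},R^{i+1}_{\lambda_0})$, so that the only critical points of $U^*_{\lambda_0}$ in $[r_0,R_1]$ are $R^1_{\lambda_0}<\dots<R^i_{\lambda_0}$, all in the interior, with $(U^*_{\lambda_0})'$ bounded away from $0$ near $r_0$ and near $R_1$ and changing sign transversally at each $R^j_{\lambda_0}$ (by the previous step). Joint $C^1$-continuity then gives, for $\lambda$ close to $\lambda_0$: $(U^*_\lambda)'$ stays bounded away from $0$ off fixed small neighbourhoods $U_j\ni R^j_{\lambda_0}$; on each $U_j$ it is close to $(U^*_{\lambda_0})'$ in $C^1$, hence strictly monotone, hence has a single zero $R^j_\lambda$; and $R^j_\lambda\to R^j_{\lambda_0}$ (a subsequential limit $R^*\in\overline{U_j}$ with $R^*\neq R^j_{\lambda_0}$ would be a zero of $(U^*_{\lambda_0})'$ there, contradicting uniqueness). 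Combined with the absence of critical points in $(0,r_0]$, the points $R^1_\lambda<\dots<R^i_\lambda$ are exactly the first $i$ critical points of $U^*_\lambda$, so $R^i_\lambda\to R^i_{\lambda_0}$; as $\lambda_0$ was arbitrary, the proposition follows. I expect the genuinely load-bearing step to be the bookkeeping of critical points — guaranteeing that none escape to $0$, merge, or appear — which rests on the non-degeneracy of critical points (Step 2, the conceptual core) together with the uniform near-origin estimate; the uniformity needed in Step 1 is routine once one notes that all constants in the proof of Proposition \ref{uniq} and in Remark \ref{deub} depend only on $N$ and on two-sided bounds for $m(\lambda)$.
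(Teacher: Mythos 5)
Your proposal is correct, and its skeleton (continuity of $\lambda\mapsto U^*_\lambda$, non-degeneracy of critical points, then a perturbation/bookkeeping step) matches the paper's, but the key ingredient is obtained by a genuinely different route. The paper proves $U^*_\lambda\to U^*_{\lambda^*}$ in $C^2_{loc}((0,\infty))$ by compactness: uniform bounds on compact intervals (Remark \ref{pbou} near the origin plus the Lyapunov functional beyond the first critical point), elliptic $W^{3,q}$ estimates and Arzel\`a--Ascoli, and then identification of the limit through the uniqueness of Proposition \ref{uniq} together with the $\lambda$-uniform asymptotics \eqref{KSconte2}; you instead rerun the contraction argument of Proposition \ref{uniq} with $\lambda$ as a parameter (noting the constants are uniform for $m$ in a compact range), get continuity of the fixed point $\eta_\lambda$, and propagate it by continuous dependence for the ODE solved backwards from $\zeta_0$. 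Both are legitimate; the paper's compactness route avoids checking parameter-continuity of the fixed-point map but must re-verify the boundary behaviour of the limit, while yours gives continuity directly and needs no subsequence extraction. In the final step the two arguments are close — the paper also uses that critical values avoid $\bar u_\lambda$ (hence strict min/max) and a mean-value argument to prevent merging — but you are more careful than the paper on one point: you rule out critical points appearing near $r=0$ uniformly in $\lambda$ via Remark \ref{deub}, which is exactly what justifies that the critical point found near $R^i_{\lambda^*}$ is indeed the $i$-th one. One small touch-up: your monotonicity argument on the neighbourhoods $U_j$ uses closeness of $(U^*_\lambda)''$, i.e. $C^2$-closeness, whereas your first step only delivers $C^1_{loc}$; this is harmless because the equation \eqref{KSwhole} expresses $(U^*_\lambda)''$ in terms of $r$, $U^*_\lambda$, $(U^*_\lambda)'$ and $\lambda$, so $C^1_{loc}$ convergence upgrades immediately to $C^2_{loc}$, but the sentence should be added.
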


\begin{proof}
Fix $\lambda^\ast>0$ and an open interval $I_0 = (A, B)$ with $0 < A < B < \infty$. Then by Remark \ref{pbou} there is $\delta > 0$ such that for any 
$\lambda \in (\lambda^\ast - \delta, \lambda^* + \delta)$ one has
\begin{equation}
|U^*_\lambda(A)| \leq C_N \,.
\end{equation}
If for some $\lambda \in (\lambda^\ast - \delta, \lambda^* + \delta)$ the function $U^*_\lambda$ is decreasing on $(A, B)$, then
non-negativity of $U^*$ yields $|U^*_\lambda| \leq C_N$ on $(A, B)$. If for some  $\lambda \in (\lambda^\ast - \delta, \lambda^* + \delta)$,
there is a smallest $z_\lambda < B$ such that $(U^*_\lambda)'(z_\lambda) = 0$, then since $V$ defined by \eqref{dflf} 
is non-decreasing and by the Step 8 in 
the proof of Proposition \ref{KSpinfty} one has $U^*_\lambda(z_\lambda) < \bar{u}_\lambda$, then, for any $r \geq z_\lambda$,
\begin{equation}
  \bar{u}_\lambda \geq
  \lambda e^{U^*_\lambda (z_\lambda)} \geq
  \lambda e^{U^*_\lambda (z_\lambda)}  -  \frac{1}{2} (U^*_\lambda (z_\lambda))^2  = V(r_\lambda) \geq V(r) \geq 
 \lambda e^{U^*_\lambda (r)}  - \frac{1}{2}  (U^*_\lambda (r))^2 .
\end{equation}
Since $\lambda \in (\lambda^\ast - \delta, \lambda^* + \delta)$, we obtain that the left hand side is bounded by a constant independent of 
$\lambda$, and consequently $U^*_\lambda $ is bounded on $(z_\lambda, \infty)$, by a constant independent of $\lambda \in (\lambda^\ast - \delta, \lambda^* + \delta)$. Overall, we have 
\begin{equation*}
\sup_{\lambda \in (\lambda^* - \delta, \lambda^* + \delta)} \sup_{(A, B)} U_\lambda^\ast \leq C(A) \,.
\end{equation*}
Then, the elliptic regularity implies  that, for any $q > 1$,
\begin{equation}
\label{KSconte1}
\|U^\ast_\lambda \|_{W^{3, q}(I_0)} \leq C(N, q, A, B - A,  \lambda^\ast, \delta) \qquad \textrm{for any } \lambda \in (\lambda^* - \delta, \lambda^* + \delta) \,.
\end{equation}

For $\alpha_0 \in (0, 1)$, we choose $q_0>0$ large enough such that $W^{3, q_0} (I_0) \hookrightarrow C^{2 + \alpha_0}(I_0)$.  Let $(\lambda_n)_{n \in \N}$ 
be a sequence such that $\lambda_n \to \lambda^*$ when $n\rightarrow \infty$. Thanks to \eqref{KSconte1}, using Arzel\` a-Ascoli's theorem, there exists a subsequence, 
still denoted $(\lambda_n)$, such that $U^\ast_{\lambda_n} \to w$, as $n\to \infty$, in $C^{2}(I_0)$. Noticing that
\begin{equation*}
|\lambda_n e^{U^\ast_{\lambda_n} (s)} -\lambda^\ast e^{ w(s)}| \leq \lambda_n | e^{U^\ast_{\lambda_n} (s)} - e^{ w(s)}| + |\lambda_n -\lambda^\ast| e^{w(s)}
\to 0 \qquad \textrm{as } n \to \infty,
\end{equation*}
we deduce that $w$ satisfies the equation
\begin{equation*}
- \Delta w + w =\lambda^\ast e^{ w}, \qquad \textrm{in } I_0 \,.
\end{equation*}
Since $I_0$ is an arbitrary compact interval, proceeding as above and using standard diagonal arguments, 
we obtain the existence of a subsequence $(\lambda_{k_n})_n$, $\lambda_n \in (\lambda^\ast -\delta , \lambda^\ast +\delta )$, for all $n\in \N$, such that $U^\ast_{\lambda_{k_n}} \to w$, as $n\to \infty$, in $C^{2}_{loc}((0,\infty ))$, for some function $w$ satisfying
\begin{equation*}
- \Delta w + w =\lambda^\ast e^{w} \qquad \textrm{in } (0, \infty) \,.
\end{equation*} 
Next, we claim that $w$ is in fact $U_{\lambda^\ast}^\ast$. Using the uniqueness of solution proved in Proposition \ref{uniq}, it is sufficient to show that 
\begin{equation}
\label{KSconte2}
\lim_{r\rightarrow 0^+}  w(r)+ 2\ln r=A_{\lambda^\ast  ,N},
\end{equation}
where $A_{\lambda ,N}=\ln  \dfrac{2(N-2)}{\lambda}$. However, by Remark \ref{pbou} there is $r_0(\varepsilon)$ independent of
$ \lambda \in (\lambda^\ast - \delta, \lambda^* + \delta)$ such that 
\begin{equation*}
 A_{\lambda, N} \leq U_\lambda^\ast (r)+2\ln r \leq  A_{\lambda, N} + \varepsilon \qquad \textrm{for all }\quad r \in (0, r_0(\varepsilon)) \,.
\end{equation*} 
Clearly $A_{\lambda_n,N}\rightarrow A_{\lambda^\ast ,N}$ when $n\to \infty$ and
using that $U_{\lambda_n}^\ast \to w$ in 
$C^2_{\textrm{loc}} ((0, r_0(\varepsilon))) $, we obtain
\begin{equation*}
A_{\lambda^*, N} \leq w(r) +2\ln r\leq  A_{\lambda^*, N} + \varepsilon , \qquad \textrm{for all }\quad r \in (0, r_0(\varepsilon)) \,.
\end{equation*} 
Since $\varepsilon > 0$ is arbitrary, we conclude that \eqref{KSconte2} holds, and therefore $w=U_{\lambda^\ast}^\ast$
by the uniqueness. Hence, 
\begin{equation}
\label{KSconte3}
U_\lambda^\ast \rightarrow U_{\lambda^\ast}^\ast ,\textrm{ as }\ \lambda\to \lambda^\ast,\ \textrm{ in }\ C_{loc}^2((0,\infty)).
\end{equation}
Finally, we prove the continuity of the function $\lambda\rightarrow R_\lambda^i$. In the following, we assume that $R_\lambda^i$ is a local minimum of $U_\lambda^\ast$, 
the case of local maximum follows analogously. Note that $U^*(R_\lambda^i) \neq \bar{u}_\lambda$, 
otherwise $U^* \equiv \bar{u}_\lambda$, and 
we have a contradiction to the uniqueness of the initial value problem. Thus, for any sufficiently small $\bar{\varepsilon} > 0$, we get
\begin{equation*}
U^\ast_{\lambda^\ast}(R_{\lambda^*}^i - \bar{\varepsilon}) > U^\ast_{\lambda^\ast}(R_{\lambda^*}^i) \qquad \textrm{and } \quad U^\ast_{\lambda^\ast}(R_{\lambda^*}^i + \bar{\varepsilon}) > U^\ast_{\lambda^\ast}(R_{\lambda^*}^i) \,.
\end{equation*}
Then \eqref{KSconte3}, yields that for sufficiently small $\lambda > 0$ there exists a local minimizer $q_\lambda$ of $U^\ast_{\lambda}$ 
in any small neighborhood of $R_{\lambda^*}^i$, or equivalently for every $\lambda > 0$ there is $q_\lambda$ with $(U^\ast_{\lambda})'(q_\lambda) = 0$
such that
\begin{equation}
\lim_{\lambda \to 0} q_\lambda =  R_{\lambda^*}^i \,.
\end{equation}
On the other hand assume that there exists a sequence $(\lambda_n)_{n \in \N}$ such that $\lambda_n \to \lambda^*$ and $(q_{\lambda_n})_{n \in \N}$
converges to $q^*$. Then by \eqref{KSconte3} one has $(U^*_{\lambda^*})' (q^*)  = 0$.  

Finally assume that there exists a sequence $(\lambda_n)_{n \in \N}$ such that $\lambda_n \to \lambda^*$ and both sequences $(q_{\lambda_n})_{n \in \N}$, 
$(q_{\lambda_n}')_{n \in \N}$ converges to $q^*$. Then by the mean value theorem, there exists $s_{\lambda_n}$ between $q_{\lambda_n}$ and 
$q_{\lambda_n}'$ such that $(U^\ast)''(s_{\lambda_n}) = 0$. By passing to the limit, one has $(U^*_{\lambda^*})' (q^*)  = (U^*_{\lambda^*})'' (q^*)  = 0$, a contradiction to the fact that every critical point is either strict minmizer or strict maximizer. 

Overall, we proved that in each neighborhood of $R^i_{\lambda^*}$, for sufficiently small $\lambda$, there exists exactly one critical point of 
$U^*_\lambda$ and the proof is finished. 
\end{proof}

\begin{proof}[Proof of Theorem \ref{KSmainthm}.]  By the definition of $i^*$, for any $i\geq \tilde{i}$ we have $R_{\tilde{\lambda}}^{i}>R$. On the other hand, by Proposition \ref{KSpinfty}, we know that, for any $i > 0$, $\lim_{\lambda\rightarrow 0}R_\lambda^i < R$. Since for any $i\in \N$ the function $\lambda\rightarrow R^i_\lambda$
is continuous by Proposition \ref{KScontmp}, we deduce that there exists $\lambda^i > \tilde{\lambda}$ such that $R_{\lambda^i}^i =R$. This concludes the proof.
\end{proof}

\section{Oscillation of the branches for generic radius}\label{sec:gen}
In this section, we prove two generic uniqueness results, one for singular and one for regular solutions 
i.e. we prove Theorems \ref{generic} and \ref{generic2}. More precisely we show that, for generic $R>0$, if $(U^*_{\lambda^*})'(R) = 0$
for some $\lambda^* > 0$, then $(U^*_{\lambda})'(R) \neq 0$, for any $\lambda \approx \lambda^\ast$, $\lambda \neq \lambda^\ast$ and 
that there exists at most one $\lambda$ such that $r^i_{\lambda ,\gamma}=R$, if $\lambda \approx \lambda^\ast$ and 
$\gamma$ is large enough. The proof of Theorem \ref{generic} relies on the Sard's theorem applied to the function $\lambda \rightarrow R_\lambda^i$, and therefore 
our first goal is to show that this function is Lipschitz. We start with the following lemma.

\begin{lem}\label{lrgl}
For any $\lambda^* > 0$ and any bounded $I \subset (0, \infty)$, there exists $C_{\lambda^*, I}>0$ locally bounded in $\lambda^*$ such that
\begin{equation}
\|U^*_{\lambda^*} - U^*_\lambda\|_{C^{2}(I)} \leq C_{\lambda^*, I} |\lambda - \lambda^*|,
\end{equation}
for any $\lambda$ sufficiently close to $\lambda^*$.
\end{lem}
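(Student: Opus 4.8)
The plan is to exploit the integral representation \eqref{eqdf} (equivalently \eqref{intformzeta}) together with the contraction structure from Proposition \ref{uniq}, and then translate the resulting estimate from the $\zeta$-variable back to the $r$-variable. First I would fix $\lambda^*$ and work on a bounded interval $I \Subset (0,\infty)$. By Remark \ref{pbou} (and the a priori bounds established in Steps 4--7 and Proposition \ref{KScontmp}), for $\lambda$ near $\lambda^*$ the solutions $U^*_\lambda$ and $U^*_{\lambda^*}$ are uniformly bounded on $I$ together with, by elliptic regularity (cf. \eqref{KSconte1}), their derivatives up to order three. Hence it suffices to control the difference. Introduce the common change of variables \eqref{defw}--\eqref{nrv} for the parameter $\lambda^*$, with $m_* = \sqrt{2(N-2)/\lambda^*}$, and write both $U^*_{\lambda^*}$ and $U^*_\lambda$ in these transformed coordinates; the point is that the two functions $\eta_{\lambda^*}$ and $\eta_\lambda$ satisfy \eqref{eq:fze} with nonlinearities $g$ that differ only through the explicit appearance of $\lambda$ (in $m^2 = m^2(\lambda)$ and in the ratio $\lambda/\lambda^*$ multiplying $e^\eta$ exactly as in \eqref{KSdefygamma}), and this difference is manifestly $O(|\lambda - \lambda^*|)$ uniformly on the relevant range of $\zeta$, since $\zeta$ stays in a compact set corresponding to $r \in I$ (and large $\zeta$ contributes only exponentially small terms).

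The key step is then a Gronwall/fixed-point estimate on $w := \eta_{\lambda^*} - \eta_\lambda$. Subtracting the two integral equations of the form \eqref{intformzeta}, one gets $w(\sigma) = \int_{\R} G_N(\tau-\sigma)\big(g_{\lambda^*}(\eta_{\lambda^*},\tau) - g_\lambda(\eta_\lambda,\tau)\big)\,d\tau$. Splitting the integrand as $\big(g_{\lambda^*}(\eta_{\lambda^*},\tau) - g_{\lambda^*}(\eta_\lambda,\tau)\big) + \big(g_{\lambda^*}(\eta_\lambda,\tau) - g_\lambda(\eta_\lambda,\tau)\big)$, the first piece is Lipschitz in $w$ with a small constant on the region where $|\eta_\lambda|$ is small (exactly as in the contraction argument of Proposition \ref{uniq}, using the mean value theorem for $x \mapsto e^x - x$), and the second piece is bounded by $C|\lambda-\lambda^*|$ uniformly, again because $G_N \in L^1$ and $\eta_\lambda$ decays. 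Taking suprema over $\zeta \geq \zeta_0$ for $\zeta_0$ large (where the Lipschitz constant is $< 1/2$) gives $\|w\|_{L^\infty(\zeta_0,\infty)} \leq C|\lambda - \lambda^*|$; extending to the compact $\zeta$-range corresponding to $I$ is then a standard continuous-dependence argument for the second-order ODE \eqref{eq:fze} on a bounded interval, with initial data at $\zeta_0$ already controlled. Differentiating \eqref{eqdf} and using $G_N' \in L^1$ (as in Lemma \ref{behaorigin} and Remark \ref{deub}) upgrades this to a $C^1$, and then via the equation itself to a $C^2$, bound on $w$ in the $\zeta$-variable.

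Finally I would change variables back: on $I \Subset (0,\infty)$ the map $r \mapsto \zeta = \ln(m_*/r)$ is a smooth diffeomorphism onto a compact set with derivatives bounded by constants depending only on $I$ and $\lambda^*$, and $U^*_\lambda(r) = \eta_\lambda(\zeta) + 2\zeta$, so $\|U^*_{\lambda^*} - U^*_\lambda\|_{C^2(I)} \leq C_{\lambda^*,I}\|\eta_{\lambda^*} - \eta_\lambda\|_{C^2(B)}$ where $B$ is the image of $I$; combined with the previous step this yields the claim, with $C_{\lambda^*,I}$ locally bounded in $\lambda^*$ since all constants entered through $\|G_N\|_{L^1}$, $\|G_N'\|_{L^1}$, the size of $m_*$, and the uniform a priori bounds, all of which vary continuously with $\lambda^*$. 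The main obstacle is the bookkeeping needed to see that the difference of the nonlinearities is genuinely $O(|\lambda-\lambda^*|)$ uniformly in $\zeta$ on all of $[\zeta_0,\infty)$ — in particular handling the factor $m^2(\lambda) e^{-2\zeta}(\eta + 2\zeta)$, whose prefactor depends on $\lambda$ but which is exponentially small for large $\zeta$ so the $\lambda$-dependence there is harmless; and making sure the Lipschitz-in-$w$ constant can be taken uniform in $\lambda$ near $\lambda^*$, which follows since $\zeta_0$ and $\bar r$ in Proposition \ref{uniq} can be chosen uniformly for $\lambda$ in a small neighborhood of $\lambda^*$.
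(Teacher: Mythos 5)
Your argument is correct, but it follows a genuinely different route from the paper's. You prove Lipschitz dependence of the fixed point of Proposition \ref{uniq} on the parameter: subtracting the two Green's-function representations \eqref{intformzeta}, absorbing the small-Lipschitz part of $g$ for $\zeta\geq\zeta_0$ (with $\zeta_0,\bar r$ chosen uniformly for $\lambda$ near $\lambda^*$, which is legitimate since they depend on $\lambda$ only through $m$), bounding the explicit parameter difference by $C|\lambda-\lambda^*|$, upgrading to $C^1$ via $G_N'\in L^1$ as in Lemma \ref{behaorigin} and Remark \ref{deub}, and then propagating to the compact part of the $\zeta$-range by Gronwall, using the uniform bounds of Remark \ref{pbou} and \eqref{KSconte1} to control the coefficients; this handles the region near the origin automatically in the $\zeta$-variable and the rest by standard continuous dependence. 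The paper instead works in the original variable $r$: it shifts $U_\lambda$ by $\delta^*=\ln(\lambda/\lambda^*)$ so that both singular solutions have the identical leading behaviour at $r=0$, writes the equation for the normalized difference $\bar W=W/\delta^*$ with the potential $2(N-2)/r^2$ extracted explicitly, and treats the resulting first-order system $Z'=\frac1r JZ+\dots$, $Z(0)=0$, as an initial value problem with a singularity of the first kind, concluding boundedness of $\bar W$ up to $I$ from the negativity of the real parts of the eigenvalues of $J$ (via the cited Koch--Kofler--Weinm\"uller-type lemma), and finishing with elliptic regularity. Your route is more self-contained, since it reuses the contraction machinery already set up in Proposition \ref{uniq} (it is essentially the uniform contraction principle giving Lipschitz dependence of the fixed point on $\lambda$), at the price of a two-region splitting and careful bookkeeping of the normalization: with the change of variables anchored at $\lambda^*$ only the factor $\lambda/\lambda^*$ in front of $e^\eta$ changes (not $m$), and then $\eta_\lambda\to-\delta^*$ rather than $0$ at infinity, so the smallness region and the Lipschitz constant must be taken relative to this shifted limit — an $O(|\lambda-\lambda^*|)$ adjustment you correctly flag; the paper's approach avoids the splitting and yields the bound in one stroke uniformly from $r=0$, at the cost of invoking an external singular-ODE lemma.
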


\begin{proof}
Fix any $\lambda >0$ and denote $V_\lambda = U_\lambda + \delta^*$, where $\delta^* = \ln(\lambda/\lambda^*)$. Then, $V_\lambda$
satisfies
\begin{equation}
- V_\lambda'' - \frac{N-1}{r} V_\lambda' + V_\lambda  = \lambda^* e^{V_\lambda} + \delta^* 
\end{equation}
and 
\begin{equation}
V_\lambda (r) = -2\ln r + \ln \left(\frac{2(N-2)}{\lambda^*} \right) + O(r^{2 - \delta}) \,.
\end{equation}
Denote $W = U^*_{\lambda^*} - V_\lambda$. We see that $W$ satisfies 
\begin{equation}
- W'' - \frac{N-1}{r} W' + W  = \lambda^* (e^{U^*_{\lambda^*}} -  e^{V_\lambda}  ) - \delta^* =
\lambda^* e^{U^*_{\lambda^*}} (1 -  e^{-W}) - \delta^*
\end{equation}
and after simple algebraic manipulations, we end up with
\begin{equation}
- W'' - \frac{N-1}{r} W' + W - \frac{2(N-2)}{r^2} W = 
\left(\lambda^* e^{U^*_{\lambda^*}} \frac{1 -  e^{-W} - W}{W} + \lambda^* e^{U^*_{\lambda^*}}-\frac{2(N-2)}{r^2} \right) W  - \delta^*
\,.
\end{equation}
Furthermore, 
\begin{equation}
W(r) = O(r^{2 - \delta}) \,,
\end{equation}
where $O(r^{2 - \delta})$ in general depends on $\lambda$ and $\lambda^*$. 
By Remark \ref{pbou} one has that $|W| \leq 1$ on $(0, c_N)$ and combined with \eqref{KSconte3} one has $|W| \leq 1$ on $I$
for any $\lambda$ sufficiently close to $\lambda^*$.
Hence, 
\begin{equation}
\left| \frac{1 -  e^{-W} - W}{W} \right| \leq C W^2 \qquad \textrm{ on } I \,,
\end{equation} 
and then by Remark \ref{pbou} for $\delta < 1$, we have
\begin{equation}
\left|\lambda^* e^{U^*_{\lambda^*}} \frac{1 -  e^{-W} - W}{W}\right| 
\leq c_N \frac{1}{r^2} (r^{2 - \delta})^2 \leq c_N \qquad \textrm{ on } I \,.
\end{equation}
Also, by Remark \ref{pbou}, we infer that
\begin{equation}
\left|\lambda^* e^{U^*_{\lambda^*}}-\frac{2(N-2)}{r^2} \right| = \frac{2(N - 2)}{r^2} \left| e^{q(r)} - 1 \right| 
\,,
\end{equation}
where $|q(r)| \leq c_N r^2(1 + |\ln r|)$. Thus, if we set $\bar{W} = W/\delta^*$, we obtain
\begin{equation}
- \bar{W}'' - \frac{N-1}{r} \bar{W}' - \frac{2(N-2)}{r^2} \bar{W} = m(r) \bar{W} - 1 
\end{equation}
with $|m(r)| \leq c_N(1 + |\ln r|)$. Furthermore, Remark \ref{deub} implies $\bar{W}'(0) = 0$ and this
condition is fulfilled continuously. 
 Finally, denote $Z_1(r) = \bar{W}(r)$, $Z_2(r) = r\bar{W}'(r)$ and $Z = (Z_1, Z_2)$. Then 
\begin{equation}
Z' = \frac{1}{r} J Z - rm(r) 
\left(
\begin{array}{c}
0 \\
Z_1
\end{array}
\right)
 + \left(
\begin{array}{c}
0 \\
r
\end{array}
\right) \,,
 \qquad \qquad Z(0) = 0 \,,
\end{equation}
where 
\begin{equation}
J = \left(
\begin{array}{cc}
0&1\\
-2(N - 2)& 2 - N
\end{array}
\right)\,.
\end{equation}
Since the eigenvalues $\mu_1, \mu_2$ of $J$ have negative real parts, proceeding as in Lemma $2.3$ of \cite{koch2001initial}, one can show that $Z_1$ is bounded on $I$. 

Consequently, $\bar{W}$ is bounded, and therefore $|W| \leq C |\delta^*|$ on $I$. Since 
$|\delta^*| \leq C_{\lambda^*} |\lambda - \lambda^*|$ for any $\lambda$ sufficiently close to $\lambda^*$, the assertion of the lemma follows from standard regularity theory since $I$ is bounded away from the origin,
and therefore the coefficients of the equation are bounded, uniformly in $\lambda \in (\lambda^* - \delta, \lambda^* + \delta)$. 
This also implies that 
$\lambda^* \mapsto C_{\lambda^*}$ is bounded locally uniformly on $(0, \infty)$.
\end{proof}

\begin{lem}\label{lmlip}
The function $\lambda \mapsto R^i_\lambda$ is Lipschitz.
\end{lem}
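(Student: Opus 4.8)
The plan is to reduce the statement to the Lipschitz estimate on the map $\lambda \mapsto U^*_\lambda$ already essentially packaged in Lemma~\ref{lrgl}, combined with the fact that at a critical point $R^i_\lambda$ the second derivative of $U^*_\lambda$ is nonzero, so that the implicit function theorem applies locally with uniform constants. First I would fix $\lambda^*>0$ and recall from the proof of Proposition~\ref{KScontmp} that in a neighborhood of $R^i_{\lambda^*}$, for $\lambda$ close to $\lambda^*$, the solution $U^*_\lambda$ has exactly one critical point $R^i_\lambda$, and that $U^*_{\lambda^*}$ has $R^i_{\lambda^*}$ as a strict local extremum, i.e. $(U^*_{\lambda^*})'(R^i_{\lambda^*})=0$ and $(U^*_{\lambda^*})''(R^i_{\lambda^*})\ne 0$ (the latter because a degenerate critical point would force, via the equation \eqref{sectKS}, that $U^*_{\lambda^*}$ is constant near $R^i_{\lambda^*}$, contradicting uniqueness of the ODE or the asymptotics at the origin). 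Fix a small closed interval $I$ around $R^i_{\lambda^*}$, bounded away from $0$, on which $|(U^*_{\lambda^*})''|\ge c_0>0$.

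Next, by Lemma~\ref{lrgl} there is a constant $C_{\lambda^*,I}$, locally bounded in $\lambda^*$, such that $\|U^*_{\lambda^*}-U^*_\lambda\|_{C^2(I)}\le C_{\lambda^*,I}|\lambda-\lambda^*|$ for $\lambda$ near $\lambda^*$. In particular, shrinking the neighborhood of $\lambda^*$ if necessary, $|(U^*_\lambda)''|\ge c_0/2$ on $I$ for all such $\lambda$, so $(U^*_\lambda)'$ is strictly monotone on $I$ and $R^i_\lambda$ is its unique zero there. Now estimate: writing $r_0:=R^i_{\lambda^*}$ and $r_\lambda:=R^i_\lambda$, both in the interior of $I$, we have $(U^*_\lambda)'(r_\lambda)=0=(U^*_{\lambda^*})'(r_0)$, hence
\begin{equation}
0 = (U^*_\lambda)'(r_\lambda) - (U^*_{\lambda^*})'(r_0)
= \big[(U^*_\lambda)'(r_\lambda) - (U^*_{\lambda^*})'(r_\lambda)\big] + \big[(U^*_{\lambda^*})'(r_\lambda) - (U^*_{\lambda^*})'(r_0)\big].
\end{equation}
The first bracket is bounded by $\|U^*_\lambda-U^*_{\lambda^*}\|_{C^1(I)}\le C_{\lambda^*,I}|\lambda-\lambda^*|$, while the second equals $(U^*_{\lambda^*})''(\xi)(r_\lambda-r_0)$ for some $\xi\in I$ by the mean value theorem, with $|(U^*_{\lambda^*})''(\xi)|\ge c_0$. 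Therefore $|r_\lambda-r_0|\le (2/c_0)\, C_{\lambda^*,I}|\lambda-\lambda^*|$, which is the desired local Lipschitz bound with a constant locally bounded in $\lambda^*$.

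Finally, since $\lambda^*>0$ was arbitrary and the Lipschitz constant is locally bounded, a standard covering argument on any compact subinterval of $(0,\infty)$ upgrades this local Lipschitz property to Lipschitz continuity of $\lambda\mapsto R^i_\lambda$ on compact subsets, hence locally Lipschitz on $(0,\infty)$; combined with the continuity already proven in Proposition~\ref{KScontmp} this gives the claim. The main obstacle I anticipate is making the ``exactly one critical point near $R^i_{\lambda^*}$'' statement and the nondegeneracy $(U^*_{\lambda^*})''(R^i_{\lambda^*})\ne 0$ fully rigorous with constants uniform in $\lambda$ near $\lambda^*$; this is where one leans on Proposition~\ref{KScontmp}, Remark~\ref{pbou}, and uniqueness for the ODE, but the bookkeeping of which neighborhoods depend on which parameters needs care.
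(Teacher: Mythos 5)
Your proposal is correct and follows essentially the same route as the paper: non-degeneracy of the critical point $R^i_{\lambda^*}$ obtained from the equation and ODE uniqueness, the $C^2$ Lipschitz estimate of Lemma~\ref{lrgl}, and a mean value theorem argument (the paper locates $R^i_\lambda$ in the interval by an intermediate value argument rather than your monotonicity-of-$(U^*_\lambda)'$ step, but this is only a cosmetic difference). The resulting bound and the locally bounded Lipschitz constant match the paper's proof.
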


\begin{proof}
Fix $\lambda^*>0$ and $i\in \N^+$. Without loss of generality, we assume that $R^i_{\lambda^*}$ is a local minimizer of $U^*_{\lambda^*}$.
Thus, from the equation in \eqref{KSwhole}, we infer that $(U^*_{\lambda^*})''(R^i_{\lambda^*}) = M > 0$. By continuity, there is $\varepsilon_0 > 0$ such that 
\begin{equation}\label{clbu}
(U^*_{\lambda^*})''(r) \geq \frac{M}{2}  \qquad r \in I_0 := [R^i_{\lambda^*} - \varepsilon_0, R^i_{\lambda^*} + \varepsilon_0] \,.
\end{equation} 
Now fix $\lambda>0$ sufficiently close to $\lambda^*$. We are going to estimate $R^i_\lambda$. Without loss of generality assume
$(U^*_{\lambda})'(R^i_{\lambda^*}) \leq 0$, the other case is analogous.  For any 
$r \in I_0^+ := [R^i_{\lambda^*}, R^i_{\lambda^*} + \varepsilon_0]$,  the mean value theorem implies
\begin{equation}\label{pvol}
(U^*_{\lambda})'(r) - (U^*_{\lambda})'(R^i_{\lambda^*}) = (U^*_{\lambda})''(\zeta) (r - R^i_{\lambda^*}) = 
  (U^*_{\lambda^*})''(\zeta) (r - R^i_{\lambda^*})  + (U^*_{\lambda} - U^*_{\lambda^*})''(\zeta) (r - R^i_{\lambda^*}) \,.
\end{equation}
for some $\zeta \in I_0$ depending on $\lambda$.
Setting $r = R^i_{\lambda^*} + \varepsilon_0$ in \eqref{pvol}, one has by Lemma \ref{lrgl} combined with \eqref{clbu} and the fact that
$(U^*_{\lambda^*})'(R^i_{\lambda^*}) = 0$ that 
\begin{equation}
(U^*_{\lambda})'(R^i_{\lambda^*} + \varepsilon_0) \geq \frac{M}{2}\varepsilon_0 - C_N |\lambda - \lambda^*| \,.
\end{equation}
Consequently, for $\lambda$ sufficiently close to $\lambda^*$, we deduce that $(U^*_{\lambda})'(R^i_{\lambda^*}) \leq 0 \leq (U^*_{\lambda})'(R^i_{\lambda^*} + \varepsilon_0)$ and
by the intermediate value theorem, there is $j$ such that $R^j_{\lambda} \in [R^i_{\lambda^*}, R^i_{\lambda^*} + \varepsilon_0]$. 
Since critical points of $U_{\lambda^*}^*$ are non-degenerate, it is standard to see that $j = i$.
Therefore, by setting $r = R_\lambda^i$ in \eqref{pvol}, one obtains
\begin{equation}
0 \leq \frac{M}{2} (R_\lambda^i - R^i_{\lambda^*}) \leq (U^*_{\lambda^*})''(\zeta) (R_\lambda^i - R^i_{\lambda^*}) 
\leq C_{\lambda^*} |\lambda - \lambda^*|,
\end{equation}
where we used the fact that $(U^*_{\lambda^*})'(R^i_{\lambda^*}) = (U^*_{\lambda})'(R^i_{\lambda}) = 0$ in the last inequality. 
This concludes the proof. 
\end{proof}

We are now in position to prove Theorem \ref{generic}.

\begin{proof}[Proof of Theorem \ref{generic}]
Fix $i\in \N^+$. Since by Lemma \ref{lmlip} the function $\lambda  \mapsto F_i(\lambda) := R^i_\lambda$ is Lipschitz, 
 Rademacher's theorem implies that $F_i$ is differentiable 
for any $\lambda\in(0, \infty) \setminus S_i$, where $S_i$ has Lebesgue measure zero. 
Denote by $E_i =\{\lambda \in (0, \infty) \setminus S_i : F_i'(\lambda) = 0 \}$. 
Then, by Sard's theorem for Lipschitz functions (see \cite{varberg}), one has that $F_i(E)$ is a set of measure zero. 
Moreover, since $F_i$ is 
Lipschitz with locally uniformly bounded Lipschitz constant, one has that $F_i(S_i)$ has also measure zero. 

Overall, $S^*_i := F_i(S_i \cup E_i)$ is a set of zero measure, and therefore $S^* = \bigcup_i S^*_i$ has measure 
zero as well. Thus for any radius $R \in (0, \infty) \setminus S^*$, any $i$ and any 
$\lambda^*$ such that $R^i_{\lambda^*} = R$, the function
$\lambda \mapsto R^i_{\lambda}$ is differentiable at $\lambda^*$ with nonzero derivative. 
We claim that for any $\lambda$ sufficiently close to $\lambda^*$ one has $(U^*_{\lambda})'(R) \neq 0$. 
Indeed, otherwise there is a sequence $\lambda_n \to \lambda^*$ such that $(U^*_{\lambda_n})'(R) \neq 0$, or equivalently, 
$R^{i_n}_{\lambda_n} = R$ for any $n \geq 1$. Since critical points of $U^*_{\lambda^*}$ are non-degenerate, $i_n = i$ for 
any sufficiently large $n$. Then, by the definition of the derivative $F_i'(\lambda^*)=\partial_\lambda R^i_{\lambda^*} = 0$, a contradiction. 
\end{proof}

Next, we turn to the proof of Theorem \ref{generic2}. The main ingredient of the proof is the fact that for some compact interval $I \subset (0,\infty )$, the function $\lambda \rightarrow r^i_{\lambda ,\gamma}$ is bounded in $C^2 (I)$ by a constant that does not depend on $\gamma$. To show this, we first prove that $\partial_\lambda u(\cdot ,\gamma ,\lambda)$ and $\partial^2_\lambda u(\cdot ,\gamma ,\lambda)$ are uniformly bounded in $\gamma$.

\begin{lem}\label{lguc}
For any compact interval $I \subset [0, \infty)$,
the function $\lambda \mapsto u(\cdot, \gamma, \lambda)$ is a locally $C^2$ map from $(0, 1)$ to $C^2(I)$, 
where $u(\cdot ,\gamma ,\lambda)$ is the solution to \eqref{KSeqintfin}. Moreover, 
the derivatives (in $\lambda$, up to second order) are bounded uniformly in $\gamma$.
\end{lem}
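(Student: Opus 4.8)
The plan is to obtain the $C^2$-dependence on $\lambda$ from the standard theory of ordinary differential equations, and then to prove the uniform bound by passing to the rescaled variable of Section \ref{secconv}. Since the nonlinearity $\lambda e^u$ is (real-analytically) smooth in $(u,\lambda)$ and, by Lemma \ref{KSlemosc}, the Cauchy problem \eqref{KSeqintfin} has a global solution for every $\gamma$, the classical theorem on differentiable dependence on parameters---applied to the Volterra form $u(r)=\gamma+\int_0^r s^{1-N}\int_0^s t^{N-1}\bigl(u(t)-\lambda e^{u(t)}\bigr)\,dt\,ds$---shows that $\lambda\mapsto u(\cdot,\gamma,\lambda)$ is a $C^\infty$, in particular $C^2$, map from $(0,1)$ into $C^2(I)$ for every fixed compact $I$ and fixed $\gamma$. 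Writing $v_1=\partial_\lambda u$ and $v_2=\partial_\lambda^2 u$, differentiation of \eqref{KSeqintfin} in $\lambda$ gives the linear Cauchy problems
\begin{equation}
-v_1''-\tfrac{N-1}{r}v_1'+(1-\lambda e^{u})v_1 = e^{u},\qquad v_1(0)=v_1'(0)=0,
\end{equation}
\begin{equation}
-v_2''-\tfrac{N-1}{r}v_2'+(1-\lambda e^{u})v_2 = 2e^{u}v_1+\lambda e^{u}v_1^2,\qquad v_2(0)=v_2'(0)=0.
\end{equation}
It remains to bound $v_1$ and $v_2$ in $C^2(I)$ by a constant independent of $\gamma$, which is the real content of the lemma and will be established for $I$ a compact subinterval of $(0,\infty)$, the case used in Theorem \ref{generic2}.

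To this end I would rescale as in \eqref{defuhat}: with $\rho=e^{\gamma/2}r$ and $\hat u(\rho,\gamma)=u(r,\gamma)-\gamma$, the functions $w_j(\rho):=v_j(r)$ are exactly $\partial_\lambda^j\hat u$ (the rescaling does not involve $\lambda$), and they solve
\begin{equation}
w_1''+\tfrac{N-1}{\rho}w_1'+(\lambda e^{\hat u}-e^{-\gamma})w_1 = -e^{\hat u},\qquad w_1(0)=w_1'(0)=0,
\end{equation}
together with the analogous equation for $w_2$ whose forcing is $-2e^{\hat u}w_1-\lambda e^{\hat u}w_1^2$. The estimates on $\hat u$ from the proof of Lemma \ref{KSlemosc}, combined with the structure of the equation for large $\rho$ (comparison with the explicit singular Gel'fand solution $\bar u^\ast$ of \eqref{defubarast}), give $e^{\hat u(\rho,\gamma)}\le C_\lambda(1+\rho)^{-2}$ uniformly in $\gamma$; hence the potential and the forcing in the $w_j$-equations are controlled, uniformly in $\gamma$, by terms that are integrable against $\rho^{N-1}$ near $\rho=0$ and that asymptotically match the Euler operator $\partial_{\rho\rho}+\tfrac{N-1}{\rho}\partial_\rho+\tfrac{2(N-2)}{\rho^2}$ at infinity.

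The delicate point---and, as flagged in the introduction, the main obstacle---is that a crude Gronwall estimate does not close: the limiting Euler operator is exactly borderline, so substituting $|w_1|\le\sup_{[0,\rho]}|w_1|$ into the integral equation produces a spurious logarithmic growth. The remedy is to peel off the expected limit. Since $\partial_\lambda$ of the asymptotics $u\sim-2\ln r+\ln\bigl(2(N-2)/\lambda\bigr)$ equals $-1/\lambda$, one expects $w_1\to-1/\lambda$ as $\rho\to\infty$; setting $\tilde w:=w_1+1/\lambda$ one finds that $\tilde w$ solves the same linear equation but with forcing only $-e^{-\gamma}/\lambda$, which is uniformly small, so that the exact cancellation between $-e^{\hat u}$ and $-\lambda e^{\hat u}w_1$ is made explicit. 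Estimating $\tilde w$ by a variation-of-constants formula built on the decaying solutions of the limiting Euler equation (which decay like $\rho^{-(N-2)/2}$ for $3\le N\le9$, and faster for $N\ge10$) then yields $|w_1(\rho)|\le C_\lambda$, $|w_1'(\rho)|\le C_\lambda(1+\rho)^{-1}$, $|w_1''(\rho)|\le C_\lambda(1+\rho)^{-2}$ on $[0,\infty)$, uniformly in $\gamma$; feeding this back into the $w_2$-equation and repeating gives the same three bounds for $w_2$. These bounds are, in effect, a quantitative version of the convergence $u(\cdot,\gamma,\lambda)\to U^\ast_\lambda$ differentiated in $\lambda$, in the spirit of Lemma \ref{lrgl}.

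Finally I would transfer back. Fix $I=[a,b]$ with $0<a<b$. For $\gamma$ large one has $\rho=e^{\gamma/2}r\ge e^{\gamma/2}a$ on $I$, so $v_j(r)=w_j(\rho)$, $v_j'(r)=e^{\gamma/2}w_j'(\rho)$ and $v_j''(r)=e^{\gamma}w_j''(\rho)$, and the decay rates above turn into $|v_j(r)|\le C_\lambda$, $|v_j'(r)|\le C_\lambda/r$, $|v_j''(r)|\le C_\lambda/r^2$ on $I$, with constants depending only on $N$, $a$, $b$ and a compact neighbourhood of the value of $\lambda$, but not on $\gamma$. For $\gamma$ in a bounded set the bound is immediate from the first paragraph together with compactness, so $\|\partial_\lambda u(\cdot,\gamma,\lambda)\|_{C^2(I)}$ and $\|\partial_\lambda^2 u(\cdot,\gamma,\lambda)\|_{C^2(I)}$ are bounded uniformly in $\gamma$, as claimed. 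The hard part throughout is the half-line estimate of the third paragraph: because the limiting operator is borderline Euler, one must exploit the precise cancellation in the $w_1$-equation (equivalently, the precise rate of convergence of the regular solutions to the singular one) rather than a soft Gronwall argument.
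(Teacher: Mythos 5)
Your reduction to the linear equations for $v_1=\partial_\lambda u$, $v_2=\partial_\lambda^2 u$, the passage to $w_j=\partial_\lambda^j\hat u$ in the rescaled variable of \eqref{defuhat}, and the cancellation $\tilde w=w_1+1/\lambda$ (which indeed turns the forcing into $-e^{-\gamma}/\lambda$) are all correct, and you rightly observe that a bound on a fixed $\rho$-interval is not by itself enough once one wants $C^2$ bounds in the original variable $r$ on a fixed compact $I\subset(0,\infty)$, since $\partial_r=e^{\gamma/2}\partial_\rho$ and the relevant $\rho$-range grows like $e^{\gamma/2}$. The genuine gap is in the step you yourself call the real content: the estimate $e^{\hat u(\rho,\gamma)}\le C_\lambda(1+\rho)^{-2}$, uniformly in $\gamma$, on the whole range $0\le\rho\lesssim e^{\gamma/2}$, together with the stronger two-sided statement implicitly used when you declare that the potential "asymptotically matches" the Euler operator with potential $2(N-2)/\rho^2$. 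Neither follows from Lemma \ref{KSlemosc}, which yields only $C^1_{loc}([0,\infty))$ convergence of $\hat u$ to the regular Gel'fand solution plus an a priori exponential bound, nor from the convergence $u(\cdot,\gamma)\to U^*_\lambda$, which is only $C^1_{loc}((0,\infty))$; what you need is uniform control in the intermediate region $1\ll\rho\ll e^{\gamma/2}$, where neither limit applies uniformly. Such a uniform comparison of the regular solutions with the singular one is a substantial matching/energy argument of the same nature as (and for regular solutions harder than) Steps 4--7 of Proposition \ref{KSpinfty} and Remark \ref{pbou}, which the paper carries out only for $U^*_\lambda$; it cannot be cited as available. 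Without it the variation-of-constants estimate on the half-line does not close, and the subsequent bounds $|w_1'|\le C(1+\rho)^{-1}$, $|w_1''|\le C(1+\rho)^{-2}$, hence the transfer back to $C^2(I)$, remain unproved. A minor further slip: for $N\ge 11$ the slowly decaying solution of the limiting Euler equation behaves like $\rho^{(-(N-2)+\sqrt{(N-2)(N-10)})/2}$, which decays more slowly than $\rho^{-(N-2)/2}$, not faster (both exponents are still negative, so this does not break the scheme, but the parenthetical is wrong).

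For comparison, the paper's own argument is much more elementary: it estimates finite differences $\bar w=(\hat u_1-\hat u_2)/(\lambda_1-\lambda_2)$ and then $(Z_1-Z_2)/(\lambda_1-\lambda_2)$ directly from the ODE, using only $\hat u_i\le 0$ (so $e^{\hat u_i}\le 1$) and $|w|\le1$ to see that all coefficients and forcings are bounded on a fixed rescaled interval $[0,A)$, with no Euler-operator analysis or decay estimates; the uniform-in-$\gamma$ character comes for free from the crude bound $e^{\hat u}\le1$. Your heavier route would, if the uniform potential estimate were supplied, give more (weighted decay of $w_1',w_1''$ over the whole matching region), but as written it rests on an input that is not established in the paper and is not easier than the lemma itself.
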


\begin{proof}
Due to the smooth dependence on data, the function $\lambda \mapsto u(\cdot, \gamma, \lambda)$ is smooth, so the main challenge
is to prove the uniform boundedness of derivatives. 

Fix $\gamma > 0$, $\lambda_1, \lambda_2 > 0$ and denote by $u_i$, $i \in \{1, 2\}$ the solution of \eqref{KSeqintfin} with 
$\lambda = \lambda_i$ and $\gamma = \gamma$. Also, fix a compact interval $I \subset [0, \infty)$. As in Section \ref{secconv},
denote by 
$\hat{u}_i (\rho) = u_i(r) - \gamma$ with $\rho = e^{\frac{\gamma}{2}}r$ and note that $\hat{u}_i$ solves \eqref{defuhat} with
$\lambda = \lambda_i$. It is easy to see that $-\gamma \leq \hat{u}_i \leq 0$ and by the continuous dependence on the initial data, 
one has $\hat{u}_2 \to \hat{u}_1$ in $C^2[0, A)$, for any fixed $A > 0$.  

If we set $w(\cdot ,\lambda_1 ,\lambda_2)=w(\cdot)  = \hat{u}_1 (\cdot ) - \hat{u}_2 (\cdot )$, then $w$ satisfies
\begin{equation}
-w'' - \frac{N-1}{r} w' + e^{-\gamma} w = \lambda_1 e^{\hat{u}_1}(1 - e^{-w}) + (\lambda_1 - \lambda_2)e^{\hat{u}_2}, \qquad 
w(0) = w'(0) = 0 \,.
\end{equation}
Also, by choosing $\lambda_2$ sufficiently close to $\lambda_1$, we can assume that $|w| \leq 1$ on $[0, A)$. Since $\hat{u}_i \leq 0$, then
$e^{\hat{u}_i} \leq 1$. Hence, we have
\begin{equation}
\lambda_1 e^{\hat{u}_1}(1 - e^{-w}) =\lambda_1 e^{\hat{u}_1} \left( \frac{1 - e^{-w} - w}{w} + 1 \right)w = m(r) w 
\end{equation}
and for a universal constant $C$ one has 
$|m(r)| \leq C \lambda_1 $ since $|w| \leq 1$ and $\hat{u}_1 \leq 0$. If we denote 
$\bar{w}(\cdot ,\lambda_1 , \lambda_2 ) = \frac{1}{\lambda_1 - \lambda_2} w(\cdot ,\lambda_1 , \lambda_2 )$, then $\bar{w}$ satisfies
\begin{equation}\label{elbi}
-\bar{w}'' - \frac{N-1}{r} \bar{w}' + e^{-\gamma} \bar{w} =  m(r)\bar{w} + e^{\hat{u}_2}, \qquad 
\bar{w}(0) = \bar{w}'(0) = 0 \,.
\end{equation} 
Since all coefficients are bounded, we obtain that $\|\bar{w}\|_{C^2[0, A)} \leq C_{\lambda_1}$, where $C_{\lambda_1}$ is bounded in 
$\lambda_1$ on bounded intervals. Equivalently, 
\begin{equation}\label{liwu}
\|\hat{u}_1 - \hat{u}_2\|_{C^2[0, A)} = \|w\|_{C^2[0, A)} \leq C_{\lambda_1} |\lambda_1 - \lambda_2|\,.
\end{equation}
By the smooth dependence on data (the solutions are regular), 
the function $\lambda \mapsto \hat{u}(\cdot, \lambda_1)$ is differentiable as a map from real 
numbers to $C^2_{\textrm{loc}}[0, A)$ and by \eqref{liwu} its derivatives are bounded, uniformly in $\lambda$. 
Next, we prove that the derivatives are in fact Lipschitz continuous in $\lambda$. For that purpose, we explicitly
indicate the dependence of $w$ and $\bar{w}$ on $\lambda$. 

Fix $\lambda_1 > 0$. By Arzel\` a-Ascoli's theorem, we have 
\begin{equation}
\partial_\lambda w(\cdot, \lambda_1) = \lim_{\lambda \to \lambda_1} \bar{w}(\cdot, \lambda_1, \lambda) =: Z(\cdot, \lambda_1) \,,
\end{equation}
where the convergence is in $C^2[0, A)$. Fix $\lambda_1, \lambda_2 \in (0, \infty)$ and denote $Z_i := Z(\cdot, \lambda_i)$, 
$i \in \{1, 2\}$. By passing to the limit in \eqref{elbi}, we obtain that $Z_i$ solves 
\begin{equation}
-Z_i'' - \frac{N-1}{r} Z_i' + e^{-\gamma} Z_i =  m_i(r)Z_i + e^{\hat{u}_{i}}, \qquad 
Z_i(0) = Z_i'(0) = 0 \,,
\end{equation}
where we used that $e^{\hat{u}_2} \to e^{\hat{u}_1}$ as $\lambda_2 \to \lambda_1$ and, since $w \to 0$, 
\begin{equation}
m_i(r) = \lim_{\lambda \to \lambda_i} \lambda_i e^{\hat{u}_i} \frac{1 - e^{-w} - w}{w} + \lambda_i e^{\hat{u}_i} = 
 \lambda_i e^{\hat{u}_i} \,.
\end{equation}
Analogously as above, by defining $\bar{Z} = (Z_1 - Z_2)/(\lambda_1 - \lambda_2)$, we obtain that $\bar{Z}$ solves
\begin{equation}\label{bzef}
-\bar{Z}'' - \frac{N-1}{r} \bar{Z}' + e^{-\gamma} \bar{Z} =  m_1(r)\bar{Z} + \frac{m_1(r) - m_2(r)}{\lambda_1 - \lambda_2} Z_2+ 
\frac{e^{\hat{u}_{1}} - e^{\hat{u}_{2}}}{\lambda_1 - \lambda_2}, \qquad 
\bar{Z}(0) = \bar{Z}'(0) = 0 \,.
\end{equation}
Furthermore, since $\hat{u}_i \leq 0$ by the mean value theorem, we have 
$|e^{\hat{u}_{1}} - e^{\hat{u}_{2}}| \leq |\hat{u}_{1} - \hat{u}_{2}|$ and since $\bar{w}$ is bounded, we infer that
the inhomogeneous term in \eqref{bzef} is bounded. Consequently, we obtain that $\bar{Z}$ is bounded in $C^2[0, A)$, and therefore 
\begin{equation}
\|\partial_\lambda w(\cdot, \lambda_1) - \partial_\lambda w(\cdot, \lambda_2)\|_{C^2[0, A)} = 
\|Z_1 - Z_2\|_{C^2[0, A)} \leq C |\lambda_1 - \lambda_2|, 
\end{equation}
where the constant $C$ is locally uniform in $\lambda_1$ and $\lambda_2$. Returning to the original variables, we obtain that
\begin{equation}
\|\partial_\lambda u(\cdot, \lambda_1, \gamma) - \partial_\lambda u(\cdot, \lambda_2, \gamma)\|_{C^2[0, Ae^{\gamma/2})} \leq 
\|\partial_\lambda \hat{u}(\cdot, \lambda_1, \gamma) - \partial_\lambda \hat{u}(\cdot, \lambda_2, \gamma)\|_{C^2[0, A)}
\leq C|\lambda_1 - \lambda_2| \,,
\end{equation}
where $C$ is independent of $\gamma > 0$ and locally uniformly bounded in $\lambda_1, \lambda_2 > 0$ as desired. 
\end{proof}

\begin{lem}\label{rglb}
For any sufficiently large $\gamma > 0$, the function $\lambda \mapsto r^{i}_{\lambda,\gamma}$
belongs to $C^2_{\textrm{loc}}(0, \infty)$, where $ r^{i}_{\lambda,\gamma}$ is defined as in Theorem \ref{KSoscbranch}. Furthermore, for any compact interval $I \subset (0, \infty)$, we have
\begin{equation}
\| \lambda \mapsto r^{i}_{\lambda,\gamma}\|_{C^2(I)} \leq C_{I} \,,
\end{equation} 
where $C_I$ is in particular independent of $\gamma$. 
\end{lem}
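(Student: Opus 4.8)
The plan is to represent $r^i_{\lambda,\gamma}$ via the implicit function theorem applied to the critical point equation $\partial_r u(r,\gamma,\lambda)=0$, and then to obtain the uniform $C^2$-bound by differentiating this equation twice in $\lambda$ and estimating the resulting quotients by means of Lemma \ref{lguc} together with the convergence of the regular solutions to the singular one.

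I would first fix a compact interval $I$ (a neighbourhood of $\lambda^i$, so in particular $I\subset(0,\lambda^*_N)$ and each $R^j_\lambda$, $1\le j\le i$, is well defined for $\lambda\in I$). Recall that $u(\cdot,\gamma,\lambda)\to U^*_\lambda$ in $C^1_{\mathrm{loc}}(0,\infty)$ as $\gamma\to\infty$, uniformly for $\lambda\in I$ (Section \ref{secconv}); using the equation \eqref{KSeqintfin} and ODE regularity this convergence improves to $C^3_{\mathrm{loc}}(0,\infty)$. By Lemma \ref{corbehazero}, $U^*_\lambda$ has critical points $0<R^1_\lambda<R^2_\lambda<\cdots$, and each of them is non-degenerate: since $U^*_\lambda$ is non-constant, at a critical point $\rho_0$ the equation in \eqref{KSwhole} gives $(U^*_\lambda)''(\rho_0)=U^*_\lambda(\rho_0)-\lambda e^{U^*_\lambda(\rho_0)}$, and were this to vanish then $U^*_\lambda(\rho_0)$ would be an equilibrium value of $u=\lambda e^u$, forcing $U^*_\lambda\equiv\mathrm{const}$ by uniqueness for the (regular) initial value problem at $\rho_0>0$, a contradiction. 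From the non-degeneracy and the $C^1_{\mathrm{loc}}$ convergence it follows in the standard way that for all sufficiently large $\gamma$ the $i$-th critical point $r^i_{\lambda,\gamma}$ of $u(\cdot,\gamma,\lambda)$ is well defined for every $\lambda\in I$, and that $r^i_{\lambda,\gamma}\to R^i_\lambda$ as $\gamma\to\infty$, uniformly in $\lambda\in I$; in particular there is a fixed compact interval $J\subset(0,\infty)$, depending only on $I$ and $i$, with $r^i_{\lambda,\gamma}\in J$ for every $\lambda\in I$ and every large $\gamma$.

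The same non-degeneracy argument, now applied to the non-constant function $u(\cdot,\gamma,\lambda)$, shows that $\partial_r^2 u(r^i_{\lambda,\gamma},\gamma,\lambda)\neq 0$. Since, by Lemma \ref{lguc}, $\lambda\mapsto u(\cdot,\gamma,\lambda)$ is a $C^2$ map into $C^2(J)$ (with the $\lambda$-derivatives bounded in $C^2(J)$ uniformly in $\gamma$) and $u$ is smooth in $r$, the function $\Phi(r,\lambda):=\partial_r u(r,\gamma,\lambda)$ is jointly $C^2$ near $(r^i_{\lambda,\gamma},\lambda)$, and the implicit function theorem applied to $\Phi(r^i_{\lambda,\gamma},\lambda)=0$ yields that $\lambda\mapsto r^i_{\lambda,\gamma}$ is $C^2$ near every point of $I$; this is the first assertion. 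For the quantitative estimate, write $r^i=r^i_{\lambda,\gamma}$, suppress $\gamma$, and use primes for $\partial_r$; differentiating $u'(r^i,\lambda)=0$ once and twice in $\lambda$ gives
\[
\partial_\lambda r^i=-\frac{\partial_\lambda u'(r^i,\lambda)}{u''(r^i,\lambda)},\qquad
\partial_\lambda^2 r^i=-\frac{u'''(r^i,\lambda)\,(\partial_\lambda r^i)^2+2\,\partial_\lambda u''(r^i,\lambda)\,\partial_\lambda r^i+\partial_\lambda^2 u'(r^i,\lambda)}{u''(r^i,\lambda)}.
\]
All numerators are bounded on $J$ uniformly in $\gamma$: the $\lambda$-derivatives $\partial_\lambda u'$, $\partial_\lambda u''$, $\partial_\lambda^2 u'$ are bounded on $J$ uniformly in $\gamma$ by Lemma \ref{lguc}, and $u,u',u'',u'''$ are bounded on $J$ uniformly in $\gamma$ because $u(\cdot,\gamma,\lambda)\to U^*_\lambda$ in $C^3_{\mathrm{loc}}(0,\infty)$. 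The denominator is bounded away from zero uniformly in $\gamma$: at the critical point the equation \eqref{KSeqintfin} gives $u''(r^i,\lambda)=u(r^i,\lambda)-\lambda e^{u(r^i,\lambda)}\to U^*_\lambda(R^i_\lambda)-\lambda e^{U^*_\lambda(R^i_\lambda)}=(U^*_\lambda)''(R^i_\lambda)\neq 0$ as $\gamma\to\infty$, uniformly for $\lambda\in I$, so $|u''(r^i_{\lambda,\gamma},\lambda)|\ge\delta_I>0$ for $\gamma$ large. Combining these bounds with $r^i_{\lambda,\gamma}\in J$ shows that $r^i_{\lambda,\gamma}$, $\partial_\lambda r^i_{\lambda,\gamma}$ and $\partial_\lambda^2 r^i_{\lambda,\gamma}$ are bounded on $I$ by a constant $C_I$ independent of $\gamma$, which is the claim.

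The step I expect to be the main obstacle is showing that these quotients do not degenerate as $\gamma\to\infty$. This relies, on the one hand, on the uniform-in-$\gamma$ bounds for the $\lambda$-derivatives of $u$ on a fixed compact set, which is precisely the content of Lemma \ref{lguc}; and, on the other hand, on the denominator $\partial_r^2 u(r^i_{\lambda,\gamma},\gamma,\lambda)$ remaining bounded away from zero, which rests on the $C^2_{\mathrm{loc}}$ convergence of the regular solutions to $U^*_\lambda$ together with the non-degeneracy of the critical points of $U^*_\lambda$, and hence on confining $r^i_{\lambda,\gamma}$ to a fixed compact subinterval of $(0,\infty)$ bounded away from the origin — a confinement that must itself be extracted from the convergence $r^i_{\lambda,\gamma}\to R^i_\lambda$.
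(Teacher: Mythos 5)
Your proposal is correct and follows essentially the same route as the paper: confine $r^i_{\lambda,\gamma}$ to a fixed compact set via the convergence $u(\cdot,\gamma,\lambda)\to U^*_\lambda$, get a uniform lower bound on $|u''(r^i_{\lambda,\gamma})|=|u-\lambda e^u|(r^i_{\lambda,\gamma})$ from the non-degeneracy of the critical points of $U^*_\lambda$, and then differentiate the critical-point relation in $\lambda$, bounding the resulting expressions uniformly in $\gamma$ by Lemma \ref{lguc}. The only (cosmetic) difference is that you write out the second-order implicit-differentiation formula explicitly, whereas the paper bounds $\partial^2_\lambda r^i_{\lambda,\gamma}$ by showing the explicit formula for $\partial_\lambda r^i_{\lambda,\gamma}$ is Lipschitz in $\lambda$ uniformly in $\gamma$.
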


\begin{proof}
First observe that Theorem \ref{KSsingintro} and standard elliptic theory implies  
$u(\cdot, \gamma) \to U^*$ in $C^2_{\textrm{loc}}$. Therefore by Lemma \ref{liouville} there is $\delta_1 > 0$ such that, 
for sufficiently large
$\gamma$, one has $u( r^{i}_{\lambda,\gamma}) \in (\underline{u}_\lambda + \delta_1, \bar{u}_{\lambda} - \delta_1)$, 
$u'( r^{i}_{\lambda,\gamma}) = 0$  
and
$|u''( r^{i}_{\lambda,\gamma})| \geq c_{\lambda} > 0$, 
where in particular $c_{\lambda}$ does not depend on $\gamma$. Then, since the arguments in the proof of Lemma \ref{lmlip} are local around the critical points, by using 
Lemma \ref{lguc}, we similarly obtain that 
$\lambda \mapsto  r^{i}_{\lambda,\gamma}$ is Lipschitz with Lipschitz constant independent of $\gamma$ and locally 
bounded in $\lambda$.  

In fact, due to the smooth dependence of solutions on data, the function $\lambda \mapsto  r^{i}_{\lambda,\gamma} $ is $C^1$ with
derivative bounded independently of large $\gamma$. 
Let us prove that $\partial_\lambda  r^{i}_{\lambda,\gamma}$ has 
also locally bounded derivative. Define $v (r) = u( r^{i}_{\lambda,\gamma} r , \gamma)$ and note that $v$ satisfies
\begin{equation}\label{svea}
- \frac{1}{( r^{i}_{\lambda,\gamma})^2} v'' - \frac{N - 1}{ r^{i}_{\lambda,\gamma} r} v' + v = \lambda e^v, \qquad
v(0) = \gamma, \quad v'(0) = v'(1) = 0 \,.
\end{equation}
If we denote $\tilde{w} = \partial_\lambda v$ and differentiate \eqref{svea} with respect to $\lambda$, we obtain
\begin{equation}
\frac{\partial_\lambda  r^{i}_{\lambda,\gamma}}{ r^{i}_{\lambda,\gamma} }(v - \lambda e^v)
=
\partial_\lambda  r^{i}_{\lambda,\gamma}
\left( \frac{1}{( r^{i}_{\lambda,\gamma})^3} v'' + \frac{N - 1}{( r^{i}_{\lambda,\gamma})^2 r} v'
 \right) = \frac{1}{( r^{i}_{\lambda,\gamma})^2} \tilde{w}'' + \frac{N - 1}{ r^{i}_{\lambda,\gamma} r} \tilde{w}' - \tilde{w}
 + \lambda e^v \tilde{w} + e^v \,.
\end{equation}
If we return to the original variables, we have
\begin{equation}
\frac{\partial_\lambda r^{i}_{\lambda,\gamma}}{ r^{i}_{\lambda,\gamma} }(u - \lambda e^u) = 
 (\partial_\lambda u)'' + \frac{N - 1}{r} (\partial_\lambda u)' + (\partial_\lambda u)(\lambda e^u - 1) + e^u \,.
\end{equation}
Substituting $r = r^{i}_{\lambda,\gamma}$ in the previous line, one obtains
\begin{equation}\label{hfi}
\partial_\lambda  r^{i}_{\lambda,\gamma} =  r^{i}_{\lambda,\gamma}
\left.\frac{(\partial_\lambda u)'' + \frac{N - 1}{r} (\partial_\lambda u)' + (\partial_\lambda u)(\lambda e^u - 1) + e^u}
{u - \lambda e^u}\right|_{r = r^{i}_{\lambda,\gamma}}
\end{equation}
As proved above, $\lambda \mapsto  r^{i}_{\lambda,\gamma}$ is Lipschitz with Lipschitz constant uniform in $\gamma$. 
Also, by Lemma \ref{lguc} the function $\lambda \mapsto u(\cdot, \lambda)$ is $C^2((0, \infty), C^2_{\textrm{loc}}(0, \infty))$ 
with second derivative 
bounded uniformly in $\gamma$. Finally, for $\gamma$ sufficiently large (cf. choice of $\delta_1$ above), one has 
that the denominator on the right hand side of \eqref{hfi} is bounded away from zero independently of $\gamma$. 
Overall, the right had side of \eqref{hfi} is Lipschitz with constant bounded independently of $\gamma$, 
and the lemma follows. 
\end{proof}

We are now in position to prove Theorem \ref{generic2}.

\begin{proof}[Proof of Theorem \ref{generic2}]
Looking for a contradiction, assume that there exist  sequences $\gamma_n \to \infty$ and $\lambda_n, \lambda'_n \to \lambda^i$
such that $ r^{i}_{\lambda_n,\gamma_n} =  r^{i}_{\lambda_n^\prime,\gamma_n} = R$. Then, by the mean value theorem, 
there exists $\lambda^*_n$ between $\lambda_n$ and $\lambda_n'$ such that 
$\partial_\lambda  r^{i}_{\lambda_n^\ast,\gamma_n} = 0$. Since $\lambda_n^* \to \lambda^i$ and 
$\lambda \mapsto \partial_\lambda  r^{i}_{\lambda,\gamma_n} $ has bounded (in $\gamma$ and locally in $\lambda$) 
second derivative, one has that 
$\partial_\lambda   r^{i}_{\lambda^i,\gamma_n}  \to 0$ as $n \to \infty$. 

Furthermore, by Lemma \ref{rglb}, the sequence of functions $( \lambda \mapsto  r^{i}_{\lambda_n,\gamma_n})_n$
is uniformly bounded in $C^{1, 1}_{\textrm{loc}}$, and therefore by Arzel\` a-Ascoli's theorem, it converges in 
$C^{1}_{\textrm{loc}}$. In addition $( \lambda \mapsto  r^{i}_{\lambda_n,\gamma_n})_n$ converges pointwise to 
$R^i_{\lambda^i}$, and therefore it converges to $R^i_{\lambda^i}$ in $C^{1}_{\textrm{loc}}$. 

Combining the previous observations, we obtain that $\partial_\lambda R^i_{\lambda^i} = 0$, a contradiction
to the definition of the set $S^*$. 
\end{proof}

\bibliographystyle{plain}

\begin{bibdiv}
\begin{biblist}

\bib{benguria}{article}{
      author={Benguria, Rafael~D.},
      author={Dolbeault, Jean},
      author={Esteban, Maria~J.},
       title={Classification of the solutions of semilinear elliptic problems
  in a ball},
        date={2000},
        ISSN={0022-0396},
     journal={J. Differential Equations},
      volume={167},
      number={2},
       pages={438\ndash 466},
         url={https://doi.org/10.1006/jdeq.2000.3792},
      review={\MR{1793200}},
}

\bib{bocano2}{article}{
      author={Bonheure, Denis},
      author={Casteras, Jean-Baptiste},
      author={Noris, Benedetta},
       title={Layered solutions with unbounded mass for the {K}eller-{S}egel
  equation},
        date={2017},
        ISSN={1661-7738},
     journal={J. Fixed Point Theory Appl.},
      volume={19},
      number={1},
       pages={529\ndash 558},
         url={https://doi.org/10.1007/s11784-016-0364-2},
      review={\MR{3625083}},
}

\bib{bocano}{article}{
      author={Bonheure, Denis},
      author={Casteras, Jean-Baptiste},
      author={Noris, Benedetta},
       title={Multiple positive solutions of the stationary {K}eller-{S}egel
  system},
        date={2017},
        ISSN={0944-2669},
     journal={Calc. Var. Partial Differential Equations},
      volume={56},
      number={3},
       pages={Art. 74, 35},
         url={https://doi.org/10.1007/s00526-017-1163-3},
      review={\MR{3641921}},
}

\bib{bocaro}{article}{
      author={Bonheure, Denis},
      author={Casteras, Jean-Baptiste},
      author={Rom\'{a}n, Carlos},
       title={Unbounded mass radial solutions for the keller-segel equation in
  the disk},
        date={2017},
     journal={Preprint arXiv:1709.10471},
}

\bib{BonheureGrossiNorisTerracini2015}{article}{
      author={Bonheure, Denis},
      author={Grossi, Massimo},
      author={Noris, Benedetta},
      author={Terracini, Susanna},
       title={Multi-layer radial solutions for a supercritical neumann
  problem},
        date={2016},
        ISSN={0022-0396},
     journal={Journal of Differential Equations},
      volume={261},
      number={1},
       pages={455 \ndash  504},
  url={http://www.sciencedirect.com/science/article/pii/S0022039616001212},
}

\bib{BonheureGrumiauTroestler2015}{article}{
      author={Bonheure, Denis},
      author={Grumiau, Christopher},
      author={Troestler, Christophe},
       title={Multiple radial positive solutions of semilinear elliptic
  problems with {N}eumann boundary conditions},
        date={2016},
        ISSN={0362-546X},
     journal={Nonlinear Anal.},
      volume={147},
       pages={236\ndash 273},
         url={https://doi.org/10.1016/j.na.2016.09.010},
      review={\MR{3564729}},
}

\bib{BoCoNo}{article}{
      author={Boscaggin, Alberto},
      author={Colasuonno, Francesca},
      author={Noris, Benedetta},
       title={Multiple positive solutions for a class of p-laplacian neumann
  problems without growth conditions},
        date={2017},
     journal={To appear in ESAIM : COCV. arXiv:1703.05727},
}

\bib{buddnorbury}{article}{
      author={Budd, Chris},
      author={Norbury, John},
       title={Semilinear elliptic equations and supercritical growth},
        date={1987},
        ISSN={0022-0396},
     journal={J. Differential Equations},
      volume={68},
      number={2},
       pages={169\ndash 197},
         url={http://dx.doi.org/10.1016/0022-0396(87)90190-2},
      review={\MR{892022}},
}

\bib{CoNo}{article}{
      author={Colasuonno, Francesca},
      author={Noris, Benedetta},
       title={A {$p$}-{L}aplacian supercritical {N}eumann problem},
        date={2017},
        ISSN={1078-0947},
     journal={Discrete Contin. Dyn. Syst.},
      volume={37},
      number={6},
       pages={3025\ndash 3057},
         url={https://doi.org/10.3934/dcds.2017130},
      review={\MR{3622073}},
}

\bib{MR1696122}{article}{
      author={Dancer, E.~N.},
      author={Yan, Shusen},
       title={Multipeak solutions for a singularly perturbed {N}eumann
  problem},
        date={1999},
        ISSN={0030-8730},
     journal={Pacific J. Math.},
      volume={189},
      number={2},
       pages={241\ndash 262},
         url={http://dx.doi.org/10.2140/pjm.1999.189.241},
      review={\MR{1696122 (2000d:35010)}},
}

\bib{del2015interior}{article}{
      author={Del~Pino, Manuel},
      author={Musso, Monica},
      author={Rom\'an, Carlos},
       title={Interior bubbling solutions for the critical lin-ni-takagi
  problem in dimension 3},
        date={2015},
     journal={To appear in J. d'Analyse Math. arXiv:1512.03468},
}

\bib{del2014large}{article}{
      author={del Pino, Manuel},
      author={Pistoia, Angela},
      author={Vaira, Giusi},
       title={Large mass boundary condensation patterns in the stationary
  {K}eller--{S}egel system},
        date={2016},
        ISSN={0022-0396},
     journal={J. Differential Equations},
      volume={261},
      number={6},
       pages={3414\ndash 3462},
         url={http://dx.doi.org/10.1016/j.jde.2016.05.032},
      review={\MR{3527634}},
}

\bib{MR2209293}{article}{
      author={del Pino, Manuel},
      author={Wei, Juncheng},
       title={Collapsing steady states of the {K}eller-{S}egel system},
        date={2006},
        ISSN={0951-7715},
     journal={Nonlinearity},
      volume={19},
      number={3},
       pages={661\ndash 684},
         url={http://dx.doi.org/10.1088/0951-7715/19/3/007},
      review={\MR{2209293 (2007b:35130)}},
}

\bib{dolbeaultflores}{article}{
      author={Dolbeault, Jean},
      author={Flores, Isabel},
       title={Geometry of phase space and solutions of semilinear elliptic
  equations in a ball},
        date={2007},
        ISSN={0002-9947},
     journal={Trans. Amer. Math. Soc.},
      volume={359},
      number={9},
       pages={4073\ndash 4087},
         url={http://dx.doi.org/10.1090/S0002-9947-07-04397-8},
      review={\MR{2309176}},
}

\bib{MR2779463}{book}{
      author={Dupaigne, Louis},
       title={Stable solutions of elliptic partial differential equations},
      series={Chapman \& Hall/CRC Monographs and Surveys in Pure and Applied
  Mathematics},
   publisher={Chapman \& Hall/CRC, Boca Raton, FL},
        date={2011},
      volume={143},
        ISBN={978-1-4200-6654-8},
         url={https://doi.org/10.1201/b10802},
      review={\MR{2779463}},
}

\bib{MR0153960}{article}{
      author={Gel'fand, I.~M.},
       title={Some problems in the theory of quasilinear equations},
        date={1963},
        ISSN={0065-9290},
     journal={Amer. Math. Soc. Transl. (2)},
      volume={29},
       pages={295\ndash 381},
      review={\MR{0153960}},
}

\bib{MR1721719}{article}{
      author={Gui, Changfeng},
      author={Wei, Juncheng},
       title={Multiple interior peak solutions for some singularly perturbed
  {N}eumann problems},
        date={1999},
        ISSN={0022-0396},
     journal={J. Differential Equations},
      volume={158},
      number={1},
       pages={1\ndash 27},
         url={http://dx.doi.org/10.1016/S0022-0396(99)80016-3},
      review={\MR{1721719 (2000g:35035)}},
}

\bib{guowei}{article}{
      author={Guo, Zongming},
      author={Wei, Juncheng},
       title={Global solution branch and {M}orse index estimates of a
  semilinear elliptic equation with super-critical exponent},
        date={2011},
        ISSN={0002-9947},
     journal={Trans. Amer. Math. Soc.},
      volume={363},
      number={9},
       pages={4777\ndash 4799},
         url={http://dx.doi.org/10.1090/S0002-9947-2011-05292-X},
      review={\MR{2806691}},
}

\bib{Hardy}{book}{
      author={Hardy, G.~H.},
      author={Littlewood, J.~E.},
      author={P\'olya, G.},
       title={Inequalities},
      series={Cambridge Mathematical Library},
   publisher={Cambridge University Press, Cambridge},
        date={1988},
        ISBN={0-521-35880-9},
        note={Reprint of the 1952 edition},
      review={\MR{944909}},
}

\bib{MR2013508}{article}{
      author={Horstmann, Dirk},
       title={From 1970 until present: the {K}eller-{S}egel model in chemotaxis
  and its consequences. {I}},
        date={2003},
        ISSN={0012-0456},
     journal={Jahresber. Deutsch. Math.-Verein.},
      volume={105},
      number={3},
       pages={103\ndash 165},
      review={\MR{2013508 (2005f:35163)}},
}

\bib{MR2073515}{article}{
      author={Horstmann, Dirk},
       title={From 1970 until present: the {K}eller-{S}egel model in chemotaxis
  and its consequences. {II}},
        date={2004},
        ISSN={0012-0456},
     journal={Jahresber. Deutsch. Math.-Verein.},
      volume={106},
      number={2},
       pages={51\ndash 69},
      review={\MR{2073515 (2005b:92005)}},
}

\bib{josephlundgren}{article}{
      author={Joseph, D.~D.},
      author={Lundgren, T.~S.},
       title={Quasilinear {D}irichlet problems driven by positive sources},
        date={1972/73},
        ISSN={0003-9527},
     journal={Arch. Rational Mech. Anal.},
      volume={49},
       pages={241\ndash 269},
         url={http://dx.doi.org/10.1007/BF00250508},
      review={\MR{0340701}},
}

\bib{Keller}{article}{
      author={Keller, Evelyn~F.},
      author={Segel, Lee~A.},
       title={Initiation of slime mold aggregation viewed as an instability},
        date={1970},
        ISSN={0022-5193},
     journal={Journal of Theoretical Biology},
      volume={26},
      number={3},
       pages={399 \ndash  415},
  url={http://www.sciencedirect.com/science/article/pii/0022519370900925},
}

\bib{koch2001initial}{article}{
      author={Koch, Othmar},
      author={Kofler, Peter},
      author={Weinm{\"u}ller, Ewa~B},
       title={Initial value problems for systems of ordinary first and second
  order differential equations with a singularity of the first kind},
        date={2001},
     journal={Analysis},
      volume={21},
      number={4},
       pages={373\ndash 390},
}

\bib{MR929196}{article}{
      author={Lin, C.-S.},
      author={Ni, W.-M.},
      author={Takagi, I.},
       title={Large amplitude stationary solutions to a chemotaxis system},
        date={1988},
        ISSN={0022-0396},
     journal={J. Differential Equations},
      volume={72},
      number={1},
       pages={1\ndash 27},
         url={http://dx.doi.org/10.1016/0022-0396(88)90147-7},
      review={\MR{929196 (89e:35075)}},
}

\bib{merlepeletier}{article}{
      author={Merle, F.},
      author={Peletier, L.~A.},
       title={Positive solutions of elliptic equations involving supercritical
  growth},
        date={1991},
        ISSN={0308-2105},
     journal={Proc. Roy. Soc. Edinburgh Sect. A},
      volume={118},
      number={1-2},
       pages={49\ndash 62},
         url={http://dx.doi.org/10.1017/S0308210500028882},
      review={\MR{1113842}},
}

\bib{miya2}{article}{
      author={Miyamoto, Yasuhito},
       title={Classification of bifurcation diagrams for elliptic equations
  with exponential growth in a ball},
        date={2015},
        ISSN={0373-3114},
     journal={Ann. Mat. Pura Appl. (4)},
      volume={194},
      number={4},
       pages={931\ndash 952},
         url={http://dx.doi.org/10.1007/s10231-014-0404-8},
      review={\MR{3357688}},
}

\bib{miya1}{article}{
      author={Miyamoto, Yasuhito},
       title={Structure of the positive radial solutions for the supercritical
  {N}eumann problem {$\varepsilon^2\Delta u-u+u^p=0$} in a ball},
        date={2015},
        ISSN={1340-5705},
     journal={J. Math. Sci. Univ. Tokyo},
      volume={22},
      number={3},
       pages={685\ndash 739},
      review={\MR{3408072}},
}

\bib{nitechnote}{article}{
      author={Ni, Wei~Ming},
       title={On the positive radial solutions of some semilinear elliptic
  equations on {${\bf R}^{n}$}},
        date={1983},
        ISSN={0095-4616},
     journal={Appl. Math. Optim.},
      volume={9},
      number={4},
       pages={373\ndash 380},
         url={http://dx.doi.org/10.1007/BF01460131},
      review={\MR{694593}},
}

\bib{PistoiaVaira2015}{article}{
      author={Pistoia, Angela},
      author={Vaira, Giusi},
       title={Steady states with unbounded mass of the {K}eller-{S}egel
  system},
        date={2015},
        ISSN={0308-2105},
     journal={Proc. Roy. Soc. Edinburgh Sect. A},
      volume={145},
      number={1},
       pages={203\ndash 222},
         url={http://dx.doi.org/10.1017/S0308210513000619},
      review={\MR{3304582}},
}

\bib{reywei}{article}{
      author={Rey, Olivier},
      author={Wei, Juncheng},
       title={Blowing up solutions for an elliptic {N}eumann problem with sub-
  or supercritical nonlinearity. {I}. {$N=3$}},
        date={2004},
        ISSN={0022-1236},
     journal={J. Funct. Anal.},
      volume={212},
      number={2},
       pages={472\ndash 499},
         url={http://dx.doi.org/10.1016/j.jfa.2003.06.006},
      review={\MR{2064935}},
}

\bib{MR1769174}{article}{
      author={Senba, Takasi},
      author={Suzuki, Takashi},
       title={Some structures of the solution set for a stationary system of
  chemotaxis},
        date={2000},
        ISSN={1343-4373},
     journal={Adv. Math. Sci. Appl.},
      volume={10},
      number={1},
       pages={191\ndash 224},
      review={\MR{1769174 (2001d:35068)}},
}

\bib{varberg}{article}{
      author={Varberg, DE},
       title={On differentiable transformations in $\mathbb{R}^n$},
        date={1966},
     journal={The American Mathematical Monthly},
      volume={73},
      number={4P2},
       pages={111\ndash 114},
}

\bib{WangWei2002}{article}{
      author={Wang, G.},
      author={Wei, J.},
       title={Steady state solutions of a reaction-diffusion system modeling
  chemotaxis},
        date={2002},
        ISSN={0025-584X},
     journal={Math. Nachr.},
      volume={233/234},
       pages={221\ndash 236},
  url={http://dx.doi.org/10.1002/1522-2616(200201)233:1<221::AID-MANA221>3.3.CO;2-D},
      review={\MR{1879873 (2003a:35078)}},
}

\end{biblist}
\end{bibdiv}

\end{document}